\DeclareMathAlphabet\mathbfcal{OMS}{cmsy}{b}{n}
\theoremstyle{plain}
\newtheorem{thm}{Theorem}[section]
\newtheorem{lem}[thm]{Lemma}
\newtheorem{prop}[thm]{Proposition}
\newtheorem{cor}[thm]{Corollary}
\newtheorem*{thm*}{Theorem}
\newtheorem{abcthm}{Theorem}
\newtheorem{abccor}[abcthm]{Corollary}
\theoremstyle{definition}
\newtheorem{defn}[thm]{Definition}
\newtheorem*{exam*}{Example}
\newtheorem{exam}[thm]{Example}
\newtheorem{rem}[thm]{Remark}
\newcommand{\fakeenv}{} 
\newenvironment{restate}[2]  
{
  \renewcommand{\fakeenv}{#2} 
  \theoremstyle{plain}
  \newtheorem*{\fakeenv}{#1~\ref{#2}} 
  \begin{\fakeenv}  
}
{
  \end{\fakeenv}
}
\def\Z{\mathbb Z}
\def\C{\mathbb C}
\def\R{\mathbb R}
\def\N{\mathbb N}
\def\B{\mathbb B}
\def\p{\partial}
\def\S{\Sigma}
\def\ul{\underline}
\def\bt{\bullet}
\def\ESS{\operatorname{ESS}}
\def\PESS{\operatorname{PESS}}
\DeclareMathOperator\Int{Int}
\DeclareMathOperator\im{im}
\DeclareMathOperator\Id{Id}
\DeclareMathOperator\GL{GL}
\DeclareMathOperator\Aut{Aut}
\DeclareMathOperator\Inn{Inn}
\DeclareMathOperator\id{Id}
\DeclareMathOperator\pconf{Conf}
\DeclareMathOperator\emb{Emb}
\DeclareMathOperator\link{\mathcal{E}}
\DeclareMathOperator\plink{\mathcal{PE}}
\DeclareMathOperator\pSep{PSep}
\DeclareMathOperator\Diff{Diff}
\DeclareMathOperator\Homeo{Homeo}
\DeclareMathOperator\lk{lk}
\DeclareMathOperator\Sub{Sub}
\DeclareMathOperator\remb{REmb}
\DeclareMathOperator{\Desc}{Desc}
\DeclareMathOperator{\FR}{FR}
\DeclareMathOperator{\st}{star}
\DeclareMathOperator{\Disk}{Disk}
\DeclareMathOperator{\In}{In}
\DeclareMathOperator{\SO}{SO}
\def\Sep{\operatorname{Sep}}
\def\RSep{\operatorname{RSep}}
\def\SAut{\Sigma\operatorname{Aut}}
\def\TSep{\operatorname{Sep}^{\pitchfork}}
\def\TSeprho{\operatorname{Sep}(\rho)^{\pitchfork}}
\def\uemb{\Sub}
\def\uremb{R\Sub}
\def\FRL{\mathcal{FR}(L)}
\definecolor{champagne}{rgb}{0.97, 0.91, 0.81}
\definecolor{burlywood}{rgb}{0.55, 0.71, 0.0}
\definecolor{cfblue}{rgb}{0.39, 0.58, 0.93}
\definecolor{amber}{rgb}{1.0, 0.75, 0.0}
\title{Embedding spaces of split links}
\author{Rachael Boyd}
\address{School of Mathematics and Statistics, University of Glasgow, Glasgow G12 8QQ, UK}
\email{rachael.boyd@glasgow.ac.uk}
\urladdr{https://www.maths.gla.ac.uk/~rboyd/} 
\author{Corey Bregman}
\address{Department of Mathematics, Tufts University, Medford, MA 02155, USA}
\email{corey.bregman@tufts.edu}
\urladdr{https://sites.google.com/view/cbregman}
\subjclass[2010]{58D10, 55P15, 55U10, 20F34 (primary), 57M07, 20F36 (secondary)}
\begin{document}
\maketitle
\begin{abstract}
    We study the homotopy type of the space~$\link(L)$ of unparametrised embeddings of a split link~$L=L_1\sqcup \ldots \sqcup L_n$ in~$\R^3$.  Our main result is a simple description of the fundamental group, or motion group, of~$\link(L)$, and we extend this to a description of the motion group of embeddings in~$S^3$. The main tool we build is a semi-simplicial space of separating systems, which we show is homotopy equivalent to~$\link(L)$. This combinatorial object provides a gateway to studying the homotopy type of $\link(L)$ via the homotopy type of the spaces~$\link(L_i)$.
\end{abstract}
\section{Introduction}

Let $L\subset \R^3$ be a one-dimensional submanifold, \emph{i.e.} a link. In this paper we study the homotopy type of the configuration space of all links isotopic to $L$. We say~$L$ is \emph{split} if it can be written as a disjoint union~$L=L_1\sqcup\cdots\sqcup L_n$, where the~$L_i\neq\emptyset$ can be contained in disjoint balls. We assume each of the $L_i$ are not themselves split, and call each sublink~$L_i$ a \emph{piece} of~$L$, so that $n$ is the number of pieces. If the total number of circle components of~$L$ is~$m$, let~$\link(L)$ denote the connected component of
\[
\uemb(\sqcup_m S^1, \R^3)=\emb(\sqcup_m S^1, \R^3)/\Diff(\sqcup_m S^1)
\]
containing the link~$L$. We quotient by the free action of~$\Diff(\sqcup_m S^1)$ in order to consider unparametrised smooth embeddings, hence `$\uemb$' stands for `submanifold'. 

We are therefore interested in the homotopy type of the space $\link(L)$, and our main result is a general description of~$\pi_1(\link(L))$ for any split link~$L$. We call this group the \emph{motion group of $L$}, and exhibit it in terms of~$\pi_1(\link(L_i))$ and~$\pi_1(\R^3\setminus L_i)$ for each piece~$L_i$. We will discuss this result first, before introducing the more general theory we built to study the homotopy groups of~$\link(L)$.

\subsection{Computation of the fundamental group}

We use our theory (Theorem~\ref{abcthm - sep equiv}) to decompose the motion group $\pi_1(\link(L))$ (with basepoint $L$ omitted from the notation) into an iterated semidirect product of three groups:
\begin{enumerate}
    \item \textbf{$\mathbf{P_L}$:} The symmetric group $S_n$ acts on the set of~$n$ link pieces by permutation, and~$P_L$ is the subgroup of permutations which only permute link pieces~$L_i$ and~$L_j$ when they are isotopic in~$\R^3$.
    \item \textbf{$\mathbf{G_L}$:} is the direct product of the motion groups of the link pieces, \emph{i.e.,} $$G_L=\prod_{i=1}^n \pi_1(\link(L_i)).$$
    \item \textbf{$\mathbfcal{FR}\mathbf{(L)}
    $:} Let~$H_i=\pi_1(\R^3\setminus L_i)$ for each piece~$L_i$ of~$L$ and let $H_L=\pi_1(\R^3\setminus L)\cong H_1*\cdots *H_n$. We define $\FRL$ to be the subgroup of~$\Aut(H_L)$ generated by automorphisms that conjugate one factor $H_j$ by an element $g\in H_i$ where $i\neq j$ and call it the  \emph{Fouxe-Rabinovitch} group of $L$ after \cite{FouxeRabinovitch1,FouxeRabinovitch2} (see Section \ref{sec:Dahm-Fouxe-Rabinovitch} for background on these groups).
\end{enumerate}

\begin{abcthm} \label{abcthm - motion group}
The motion group $\pi_1(\link(L))$ is isomorphic to $(\FRL\rtimes G_L)\rtimes P_L$. 
\end{abcthm}

The action of $P_L$ and $G_L$ in this iterated semidirect product decomposition may be described as follows. $P_L$ acts by permuting the link pieces, which has the effect of permuting the factors in $G_L=\prod_{i=1}^n \pi_1(\link(L_i)),$ as well as the conjugating elements~$g\in H_i$ and factors~$H_j$ in $\FRL$. For the inner semidirect product, each factor $\pi_1(\link(L_i))$ acts on $H_i=\pi_1(\R^3\setminus L_i)$  (via the Dahm homomorphism~\cite{Dahm, Wattenberg1972}, see Section \ref{sec:Dahm-Fouxe-Rabinovitch} for a definition). If an element $y\in\pi_1(\link(L_i))$ induces an automorphism $\varphi$ of $H_i$, then $y$ acts on $\FRL$ by sending the conjugation of~$H_j$ by $g\in H_i$ to the conjugation of~$H_j$ by $\varphi(g)$.

We prove this theorem by first exhibiting motions in~$\pi_1(\link(L))$ that generate the various subgroups. We then construct two split exact sequences, the first of which splits off~$P_L$. Using Theorem~\ref{abcthm - sep equiv}, we show that~$\FRL$ is the kernel of the second, which splits off $G_L$. We remark that the Fouxe-Rabinovitch group of a free product is finitely presented as long as each of the factors are.  In particular, it follows from Theorem \ref{abcthm - motion group} that $\pi_1(\link(L))$ is finitely presented as long as $\pi_1(\link(L_i))$ is for each piece~$L_i$.

Let us illustrate this semidirect product decomposition in the special case when~$L=U_n$ is the $n$-component unlink. The motion group in this case was first computed by Wattenberg (following Dahm) \cite{Wattenberg1972, Dahm}, who identified $\pi_1(\link(U_n))$ with the symmetric automorphism group~$\SAut(F_n)$  of the free group $F_n$. This is the subgroup of $\Aut(F_n)$ whose elements send generators of $F_n$ to conjugates of generators or their inverses. This identification can be understood as follows:~$F_n$ is the fundamental group of~$\R^3\setminus U_n$, and a motion in $\link(U_n)$ starting and ending at~$U_n$ acts as an automorphism of this fundamental group. 

A presentation for $\SAut(F_n)$ was found by McCool \cite{McCool} and Savushkina \cite{Savushkina1996}, and later recovered by Brendle and Hatcher \cite{BrendleHatcher} in the context of motion groups. The decomposition given by Theorem \ref{abcthm - motion group} amounts to a description of $\SAut(F_n)$ as the semidirect product $\SAut(F_n)\cong (\mathcal{FR}(U_n)\rtimes (\Z/2\Z)^n)\rtimes S_n$.  Let us relate these 3 groups to motions of $U_n$:
\begin{enumerate}
    \item View $U_n$ is a split link with~$n$ unknot pieces. Then since any two pieces are isotopic, there is a subgroup of the motion group isomorphic to the symmetric group~$S_n$ given by motions which permute the pieces.
    \item Since each unknot is unparametrised,~$\pi_1(\link(L_i))=\pi_1(\link(U))=\Z/2\Z$. A generator for this is a $180^\circ$ rotation about a diameter of a round unknot. Since we can realise these motions disjointly, we get a copy of~$(\Z/2\Z)^n$ in~$\pi_1(\link(U_n))$.
    \item  In this case, the Fouxe-Rabinovitch group is generated by motions where one unknot piece shrinks, moves through another, and returns to its original position. This corresponds the pure symmetric automorphism group of $F_n$ in which every generator is sent to a conjugate of itself.
\end{enumerate}

\subsection{Embeddings in \texorpdfstring{$S^3$}{the 3-sphere.}} Thinking of $S^3$ as the 1-point compactification of $\R^3$, there is a fibre sequence relating $\link(L)$ to $\uemb(L,S^3)$, which we define to be the path component of $\emb(\sqcup_m S^1,S^3)/\Diff(\sqcup_m S^1)$ containing $L$. We explore this relationship in Section \ref{section - the fundamental group}, and extend Theorem \ref{abcthm - motion group} to this setting. In fact, the fundamental group of $\uemb(L,S^3)$ is a quotient of $\pi_1(\link(L))$, by inner automorphisms of $\pi_1(S^3\setminus L)\cong\pi_1(\R^3\setminus L)$:
\begin{abccor}\label{abccor - motion group S3}Let $H_L=\pi_1(S^3\setminus L)$. Then 
\[\pi_1(\uemb(L,S^3))\cong \left((\FRL\rtimes G_L)/\Inn(H_L)\right)\rtimes P_L.\]
where $\Inn(H_L)$ is the group of inner automorphisms of $H_L$.
\end{abccor}

\subsection{Separating systems}
Brendle and Hatcher \cite{BrendleHatcher} showed that~$\link(U_n)$ (which they call the \emph{configuration space} of the link) is homotopy equivalent to the embedding space of \emph{round} unlinks, where round embeddings are such that each embedded~$S^1$ bounds a Euclidean disk in some affine plane~$\R^2\subset \R^3$. 
They consider collections of spheres, called 
\emph{separating systems}, which split the unlink into unknot components.  

We build a semi-simplicial space of {separating systems} for~$\link(L)$, which parametrises all of the ways of splitting links in~$\link(L)$ into pieces, and is topologised via the Whitney topology for smooth embedding spaces. As in Brendle--Hatcher, this reduces the codimension two problem of studying any homotopy group of~$\link(L)$ into
\begin{enumerate}[(i)]
    \item a codimension one problem - studying the space of separating $2$-spheres; 
    \item an easier codimension two problem - studying the corresponding homotopy group of~$\link(L_i)$ for each piece~$L_i$.
\end{enumerate}

Using this tool, for each~$k$ and each homotopy class in~$\pi_k(\link(L))$, we are able to exhibit a representative with certain extra structure. It is precisely this extra structure which enables us to compute the fundamental group~$\pi_1(\link(L))$ in Theorem~\ref{abcthm - motion group}.

A separating system for a link~$\rho\in \link(L)$ is a disjoint union of unparametrised embedded spheres in~$\R^3\setminus \rho$, which we denote by~$\Sigma$, such that each connected component of~$\R^3\setminus \S$ contains at most one piece of~$\rho$. Some examples of separating systems for the 3 component unlink are shown below.

\begin{center}
	\begin{tikzpicture}[framed, scale=0.6]
	\shade[ball color = cyan!40, opacity = 0.2] (0,0) circle (1.5cm);
	\shade[ball color = cyan!40, opacity = 0.4] (-.4,-.35) circle (0.35cm);
	\shade[ball color = cyan!40, opacity = 0.4] (2.5,2) circle (0.75cm);
	\draw (0,.5) circle [x radius=.3cm, y radius=.35cm];
	\draw[rotate=18] (2.9,1.1)  circle [x radius=.55cm, y radius=.25cm];
	\draw[rotate=40] (-0.65,-.05) circle [x radius=.1cm, y radius=.2cm];
	\shade[ball color = pink!40, opacity = 0] (1.5,.3) circle (2.75cm);
	\end{tikzpicture}
	\qquad
	\begin{tikzpicture}[framed, scale=0.6]
	\shade[ball color = cyan!40, opacity = 0] (0,0) circle (1.5cm);
	\shade[ball color = pink!60, opacity = 0.4] (.27,0) circle (1.2cm);
	\shade[ball color = pink!40, opacity = 0.4] (-.4,-.35) circle (0.35cm);
	\shade[ball color = pink!40, opacity = 0.2] (1.5,.3) circle (2.75cm);
	\draw (0,.5) circle [x radius=.3cm, y radius=.35cm];
	\draw[rotate=18] (2.9,1.1)  circle [x radius=.55cm, y radius=.25cm];
	\draw[rotate=40] (-0.65,-.05) circle [x radius=.1cm, y radius=.2cm];
	\end{tikzpicture}
\end{center}

We build a semi-simplicial space~$\Sep_\bt$ of separating systems for~$\link(L)$, where the $0$-simplices~$\Sep_0$ is the space of pairs~$(\S,\rho)$ such that~$\rho \in \link(L)$ and~$\S$ is a separating system for~$\rho$. The $p$-simplices~$\Sep_p \subset (\Sep_0)^{p+1}$ is the space of $(p+1)$ disjoint separating systems for the same link~$\rho$.

There is a natural augmentation map $\varepsilon_\bt\colon\Sep_\bt{\rightarrow} \link(L)$ which forgets the separating systems. Our theorem concerns the induced map on the geometric realisation~$|\Sep_\bt|$.

\begin{abcthm}\label{abcthm - sep equiv}
The map $|\varepsilon_\bt|\colon|\Sep_\bt|{\rightarrow} \link(L)$
is a homotopy equivalence.
\end{abcthm}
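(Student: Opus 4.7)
The plan is to apply the standard criterion (due to Weiss, see also Ebert--Randal-Williams) that a Serre microfibration with weakly contractible (homotopy) fibres is a weak equivalence. So the proof breaks into two pieces: (a) verifying that $|\varepsilon_\bt|$ is a Serre microfibration, and (b) showing that for each $\rho\in \link(L)$ the homotopy fibre over~$\rho$ is weakly contractible. A consequence of (a) together with compactness of $|\Sep_\bullet|$ over compacta is that the homotopy fibre agrees with the strict fibre, which we identify with the geometric realisation of the semi-simplicial space $\Sep_\bt(\rho)$ of disjoint tuples of separating systems for the single link~$\rho$.

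For the microfibration property, I would start with a lifting problem consisting of a map $\widetilde H\colon D^k\to |\Sep_\bt|$ and a homotopy $H\colon D^k\times I\to \link(L)$ extending $\varepsilon_\bt\circ \widetilde H$. Writing a point of $D^k$ as lying in a simplex $\Sigma\times \Delta^p$ with $\Sigma=(\Sigma_0,\ldots,\Sigma_p)$ a $p$-tuple of pairwise disjoint separating systems for $H(x,0)$, the lift is defined tautologically by keeping the spheres $\Sigma$ fixed and pushing $t$ into the family of links~$H(x,t)$. The key point is that the condition ``$\Sigma$ is disjoint from $\rho$ and separates its pieces'' is open in $\rho$, so the chosen $\Sigma$ continues to be a separating system for $H(x,t)$ for $t$ small; by compactness of $D^k$ a uniform~$\epsilon>0$ can be found. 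This gives the desired partial lift on $D^k\times[0,\epsilon]$, i.e.\ the microfibration property.

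For fibre contractibility, I would fix $\rho\in\link(L)$ and show that any continuous $f\colon S^k\to|\Sep_\bt(\rho)|$ extends over $D^{k+1}$. Since $S^k$ is compact and the realisation is filtered by skeleta, the image is contained in a compact subspace which involves only a compact family of separating systems for~$\rho$; in particular, all spheres appearing in this family are contained in some bounded region of $\R^3\setminus \rho$. Because $\rho$ is split, one may choose tiny round $2$-spheres $\Sigma_0$ enclosing each piece $L_i$ of~$\rho$ that lie outside this compact region, hence are disjoint from every separating system in the image of $f$. Joining with $\Sigma_0$ at every simplex then produces an explicit null-homotopy of $f$: the simplex $(\Sigma_0,\Sigma_1,\ldots,\Sigma_p)$ with face $(\Sigma_1,\ldots,\Sigma_p)$ provides a cone structure. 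Combining (a) and (b) with the microfibration criterion yields that $|\varepsilon_\bt|$ is a weak equivalence, hence a homotopy equivalence since both sides have the homotopy type of CW complexes.

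The main obstacle I anticipate is the precise bookkeeping of (a), namely checking that lifts can be constructed coherently across the simplicial filtration of $|\Sep_\bt|$ and that the disjointness/separation condition is open in a sufficiently parametric sense (including continuity of the map $(\Sigma,\rho)\mapsto \Sigma$ on the spaces of unparametrised embedded spheres). The contractibility argument in (b) is essentially a ``small sphere'' trick, but one must be careful that the cone-point separating system $\Sigma_0$ can be chosen continuously over the compact parameter space — here it suffices to first shrink all pieces of $\rho$ into fixed disjoint balls and then pick standard model spheres, which can be done uniformly over the compact image of $f$.
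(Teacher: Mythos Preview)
Your plan for (a) is reasonable, though it differs from the paper: the paper does not show that $|\varepsilon_\bullet|$ is a microfibration, but rather that each $\varepsilon_p\colon \Sep_p\to\link(L)$ is a genuine Serre fibration (by isotopy extension) and then invokes \cite[Lemma~2.14]{EbertRandalWilliams} to identify the homotopy fibre with $|\Sep(\rho)_\bullet|$. Either route would work for this step.

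The real gap is in (b). Your cone argument hinges on producing a single essential separating system $\Sigma_0$ for the \emph{fixed} link $\rho$ that is disjoint from \emph{every} separating system appearing in the compact image of $f\colon S^k\to |\Sep(\rho)_\bullet|$. The justification you give is self-contradictory: you observe that all spheres in the image lie in a bounded region of $\R^3\setminus\rho$ and then propose to choose ``tiny round $2$-spheres enclosing each piece $L_i$'' that lie \emph{outside} this region. But any sphere enclosing $L_i$ must bound a ball containing $L_i$, and $L_i$ sits inside the bounded region (indeed, every separating sphere in the image encloses some subcollection of pieces). So such a $\Sigma_0$ cannot be ``outside''. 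Your suggested fix---``first shrink all pieces of $\rho$ into fixed disjoint balls''---does not make sense here: we are computing the fibre over a \emph{fixed} $\rho$, so we are not free to isotope the link. More fundamentally, because the topology on $\Sep(\rho)_\bullet$ is the Whitney topology, a compact image of $S^k$ can involve a genuinely continuous family of separating systems; the union of their spheres can fill a solid shell around each piece, and there is no reason a single $\Sigma_0$ disjoint from the whole family should exist. The ``small sphere'' trick only works once the family is finite.

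This is exactly why the paper's argument is substantially more elaborate. They first pass from $\Sep(\rho)_\bullet$ to the \emph{discretised} fibre $\Sep(\rho)^\delta_\bullet$ via a bi-semi-simplicial comparison (Proposition~\ref{prop - moving from discrete topology to whitney}); in the discrete setting any map from $S^k$ hits only finitely many separating systems. Even then one cannot simply cone: instead they enlarge to a complex $\TSeprho_\bullet$ allowing transverse intersections, show that \emph{this} is contractible by a genuine cone argument (Lemma~\ref{lemma-existence of transverse system}), and then prove the inclusion $|\Sep(\rho)^\delta_\bullet|\hookrightarrow|\TSeprho_\bullet|$ is a weak equivalence by an innermost-disk surgery procedure (Section~\ref{section - contractible}). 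The cone-point idea you have in mind does appear, but only in Lemma~\ref{lem - discrete missing sep is contractible}, where one needs disjointness from a \emph{single} fixed $\hat\Sigma$ and can use a parallel copy in a tubular neighbourhood---a trick that does not extend to a continuous compact family.
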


This result is used in the proof of Theorem~\ref{abcthm - motion group}, but we can also apply this theorem to get a rigid structure on the homotopy classes of $\link(L)$, which we do in Appendix~\ref{section - implications for homotopy groups}. We show the theorem implies that each homotopy class in~$\pi_k(\link(L))$ has a representative~$f\colon S^k\to \link(L)$ such that the image of~$f$ exhibits what we call a \emph{compatible separating triangulation}. This is a triangulation of the~sphere $S^k$ such that on the image of each simplex under~$f$, there exists a separating system for the embedded link which changes by isotopy as the link does. Moreover on faces where two or more simplices meet, all separating systems exist simultaneously and disjointly.

In fact we go further, and prove that in each homotopy class there is a representative with compatible separating triangulation such that in addition all separating spheres are \emph{round} \emph{i.e.,}~bound Euclidean balls in~$\R^3$.

\subsection{Related work}

Previous work on embedding spaces of links in dimension 3 has been primarily concerned with the case when~$m=1$, and ~$L$ is a knot. Indeed, an iterative formula for the homotopy type of the moduli  space of a \emph{parametrised} knot $K$ in $S^3$---$\emb_K(S^1,S^3)$---has been computed in work of Hatcher \cite{Hatcher, Hatcher2002}, Budney \cite{Budney, Budney2010, BudneySplicing} and Budney--Cohen \cite{BudneyCohen}. Central to their work is the associated space of \emph{long knots}:  embeddings of~$\mathbb{R}^1$ in~$\mathbb{R}^3$ which agree with the standard embedding outside the unit ball. Budney gives two decompositions of the moduli spaces of parametrised \emph{long knots} in $\R^3$, via the \emph{little 2-discs} operad \cite{Budney} and \emph{satellite} decomposition \cite{Budney2010,BudneySplicing}.

For embedding spaces of links, much less is known. Early approaches focused on computing~$\pi_1(\link(L))$ for specific links. This group, which we call the \emph{motion group}, is isomorphic to the relative homotopy group $\pi_1(\Diff(\R^3), \Diff(\R^3,L), \Id)$ \cite{Wattenberg1972}. In fact, this isomorphism holds for the motion group of any submanifold $V$ of a manifold~$M$:
\[ \pi_1(\Diff(M), \Diff(M,V),\Id)\cong \pi_1(\Sub(V,M),V)
,\]
by applying a standard result about fibrations and relative homotopy groups \cite[7.2.9]{Spanier} to Palais' fiber sequence (see, e.g.~\cite[Corollary 2.5]{BoydBregmanSteinebrunner24})
\[
\Diff(M,V) \to \Diff(M) \to \Sub(V,M),
\]
where $\Sub(V,M)$ denotes the unparametrised embedding space. Wattenberg \cite{Wattenberg1972} calls this group the \emph{smooth motion group} of $V$ in $M$, in comparison with the \emph{topological motion group} $\pi_1(\Homeo(M), \Homeo(M,V); \id)$ studied by e.g.~Dahm \cite{Dahm} and Goldsmith~\cite{Goldsmith1, Goldsmith}. For $V\subset M$ a compact surface in a 3-manifold, the smooth and topological motion groups can be shown to be isomorphic, following \cite{Cerf}. However for links in 3-manifolds there is no analogous proof in the literature, though the authors expect it to be true.

Goldsmith \cite{Goldsmith1}, citing work of Dahm \cite{Dahm} for the unlink case, computed the topological motion group of the $n$-component unlink~$U_n$, and torus links~\cite{Goldsmith}, and Wattenberg made the same computation for the smooth motion group of the unlink. More recently, Bellingeri--Bodin computed the motion group of the \emph{necklace link} in $\R^3$ \cite{BellingeriBodin2016} and Damiani and Kamada computed the motion group for the round unparameterised embedding space of the disjoint union of a Hopf link and an unknot \cite{DamianiKamada}. 

For parametrised links, Burke and Koytcheff \cite{BurkeKoytcheff2015}  introduce a coloured operad which extends Budney's work on knot splicing \cite{BudneySplicing} to the notion of link \emph{infection}. They study the space of two component string links using this operad. 
More recently, Havens and Koytcheff \cite{HavensKoytcheff2021} studied the homotopy type of the spaces of framed and unframed parametrised links in $S^3$, modulo the action of $\SO(4)$ on $S^3$ by rotations. By fixing one component $K$ of a link, and considering the other components in the complement of  a tubular neighbourhood $S^3\setminus \nu(K)$, they prove results about spaces of parametrised knots in $S^1\times D^2$, and $T^2\times I$, using methods which extend \cite{Budney2010}. 

We have restricted ourselves to links in dimension 3 for this discussion, but note here that there are many interesting results on moduli spaces of higher dimensional links.

In the past 50 years there have been a series of announcements \cite{CesardeSaRourke}, papers \cite{HendriksLaudenbach1984,HendriksMcCullough}, and unfinished manuscripts \cite{Hatcherwrongway}, studying the homotopy type of $\Diff(M)$ for a reducible 3-manifold $M$, in terms of fiber sequences. One of the main innovations in these works is the consideration of a configuration space parametrising decompositions of $M$ into irreducible pieces. If one considers $M$ to be a link complement, this is analogous to our space of separating systems. In work with Steinebrunner, we develop these ideas further to study the homotopy type of $B \Diff(M)$ for reducible 3-manifolds \cite{BoydBregmanSteinebrunner24}.

\subsection{Discussion and future directions}
The proof of Theorem \ref{abcthm - motion group} in Section \ref{section - the fundamental group} suggests a strategy for computing the higher homotopy groups of $\link(L)$. In fact, we expect the higher homotopy groups $\pi_k(\link(L))$ to be built out of the classes in $\pi_k(\link(L_i))$ together with classes in $\pi_k$ of the configuration space of $p$ points in $\R^3\setminus L_i$. If each piece is a knot, then the  groups $\pi_k(\link(L_i))$ are understood, as discussed in the previous section, although concrete computations in the literature are sparse. Little is known in the case that the piece is a link, except in the case of topological motions of torus links by Goldsmith~\cite{Goldsmith}. In upcoming work, we will prove that the unparametrised embedding space of a Hopf link,~$\link(HL)$, is homotopy equivalent to the round subspace~$R\link(HL
)$, where each link component bounds a Euclidean disk in some affine~$\R^2\subset \R^3$. This has the homotopy type of an explicit finite-dimensional manifold, which we are able to describe.

Homological stability for unparametrised embeddings of unlinks in 3-manifolds was proved by Kupers in \cite{Kupers}. We expect to be able to use our techniques, and Theorem~\ref{abcthm - sep equiv} in particular, to study the homology of~$\link(L)$ and more general homological stability phenomena.

Finally, we point out an application of our work to diffeomorphisms of 4-manifolds. Regarding $L$ as the locus of a collection of attaching spheres of 2-handles in $S^3$, we can think of a point of $\uemb(L,S^3)$, together with framing data, as describing a Kirby diagram of a 4-manifold $M$. If $\partial M=S^3$, we can cap off $M$ by adding a 4-handle to obtain a closed 4-manifold $\widehat{M}$. 

A motion of~$L$, representing an element of $\pi_1(\uemb(L,S^3))$, then gives rise to a diffeomorphism of~$M$ supported in a collar of the~$0$-handle. Since~$\Diff_0(S^3)$ is connected, this extends to a diffeomorphism of~${M}\cup(S^3\times I)$ which is the identity on the boundary. Fixing an element of~$\Diff(D^4, \partial D^4)$ we can further extend to a diffeomorphism of~$\widehat{M}$.
Using the Smale conjecture~\cite{Hatcher}, one can show that the isotopy class is independent of the choice made when extending to ${M}\cup(S^3\times I)$. Therefore for each fixed class in~$\pi_0(\Diff(D^4, \partial D^4))$ we obtain a homomorphism
\[\pi_1(\uemb(L,S^3))\to\pi_0(\Diff(\widehat{M})).\]
It would be interesting to compute when these classes are non-trivial, using Corollary~\ref{abccor - motion group S3} to understand the left hand side. Note that the image of this map consists of diffeomorphisms which act trivially on second homology, and thus are topologically isotopic to the identity \cite{Quinn}. 

To give an example, when $L$ is a disjoint union of $n$ unknots with framing $\pm1$ and $m$ Hopf links with zero-framing, $\widehat{M}$ is a connected sum of $n$ $\C P^2$ or $\overline{\C P}^2$'s and $m$ $S^2\times S^2$'s. In this case, we can compute the fundamental group of $\uemb(L,S^3)$ explicitly using Corollary \ref{abccor - motion group S3} (see Example \ref{example - Htrivial} for a description of $\pi_1(\link(L))$).

One can similarly construct maps 
$\pi_k(\uemb(L,S^3))\to\pi_{k-1}(\Diff(\widehat{M})),$
 for each fixed class in $\pi_{k-1}(\Diff(D^4, \partial D^4))$, but in this setting we do not have a version of Corollary~\ref{abccor - motion group S3}.

\subsection{Outline}
In Section~\ref{section - separating systems} we define the spaces of interest, introduce separating systems, and prove some combinatorial and topological properties which we need for the computation of the motion group. In Section~\ref{section - semi simplicial} we introduce the semi-simplicial space and using semi-simplicial arguments we reduce the proof of Theorem~\ref{abcthm - sep equiv} to a contractibility statement, the proof of which we defer to Section~\ref{section - contractible}.
We study~$\pi_1(\link(L))$ in Section~\ref{section - the fundamental group}. We prove Theorem~\ref{abcthm - motion group} and extend our result to the motion group of link configurations in~$S^3$ as opposed to~$\R^3$, proving Corollary~\ref{abccor - motion group S3}. 
Following this in Section~\ref{section - contractible} we prove the contractibility statement using geometric tools. Thus we have completed the proof of Theorem~\ref{abcthm - sep equiv}. Finally, in Appendix~\ref{section - implications for homotopy groups} we discuss some further implications of this result for finding representatives of homotopy classes of~$\link(L)$. 

\subsection{Acknowledgements}
We are very grateful to Gabriel Corrigan, Kiyoshi Igusa, Oscar Randal- Williams, and Jan Steinebrunner for insightful conversations. We would also like to thank Mark Powell and Peter Teichner for conversations about 4-manifold applications. Additionally we would like to thank the anonymous referees for helpful feedback, which significantly improved the paper.

The first author was partially supported by the Max Planck Institute for Mathematics in Bonn, ERC grant No.~756444, and EPSRC Fellowship No.~EP/V043323/1. 
The second author was supported by NSF grant DMS-2052801. 
Both authors acknowledge support and hospitality of the ``Research in Pairs'' program at the Mathematisches Forschungsinstitut Oberwolfach in August 2021, where some of this work was carried out.

\section{Embedding spaces and sphere systems}\label{section - separating systems}
In this section, we set up some notation and define the basic objects of study that will be referred to throughout the paper. We then prove some combinatorial and topological properties required for the proof of Theorem~\ref{abcthm - motion group}.

\subsection{Embedding spaces} Let $M$ be a smooth manifold. The diffeomorphism group of~$M$ will be denoted $\Diff(M)$, and the space of smooth embeddings of a smooth manifold $N$ into $M$, endowed with the $C^\infty$ Whitney topology, will be denoted $\emb(N,M)$. Recall that equipped with this topology $\emb(N,M)$ is a principal $\Diff(N)$-bundle where $\Diff(N)$ acts by precomposition.  The quotient is the space of unparametrised embeddings \[\uemb(N,M)=\emb(N,M)/\Diff(N).\]
We will typically work with $\uemb$ and thus often identify embeddings of spaces with their images.

A link $L$ is an element of $ \uemb(\sqcup_m S^1,\R^3)$ for some $m$.  We denote the path component of $ \uemb(\sqcup_m S^1,\R^3)$ containing $L$ by $\link(L)$. $L$ is called \emph{split} if it can be written as a disjoint union $L=L_1\sqcup\cdots\sqcup L_n$ where $n\geq 2$ and each $L_i\neq \emptyset$ can be contained in a ball $B_i$ such that $B_i\cap B_j=\emptyset$ for $i\neq j$. If no such decomposition exists $L$ is called \emph{unsplit}. We will always assume that each $L_i$ is unsplit in the decomposition $L=L_1\sqcup\cdots\sqcup L_n$. As in the introduction, the unsplit sublinks $L_i$ will be called the \emph{pieces} of $L$.

We now introduce the main spaces of study. For~$i\in \{1,2,\ldots,n\}$, fix once and for all unsplit links~$L_i\in \uemb(\sqcup_{m_i}S^1, \R^3)$
for some~$m_i\in \N$. We choose $L_i$ so that its image lies in the ball $B_i$ of radius $\frac{1}{2}$ centered on $(2i,0,0)\in \R^3$. We also require that if~$L_i$ and~$L_j$ are isotopic, then the embeddings differ by a translation of $\R^3$. Then for~$m=\sum_{i=1}^n m_i$ we set~$L$ to be the image of the disjoint union $L_1\sqcup\cdots \sqcup L_n$ in
$\uemb(\sqcup_{m}S^1, \R^3)$. When we refer to $L\in \link(L)$ in the sequel, we mean this chosen unparametrised embedding, which will serve as the natural basepoint for $\link(L)$.

We also introduce the space $\plink(L)$ to be the component of the embedding space
$$
\emb(\sqcup_{m}S^1,\R^3)/\prod_{i=1}^n\Diff(\sqcup_{m_i}S^1)
$$
containing~$L_1\sqcup\cdots \sqcup L_n$. This is the pure unparametrised embedding space, in analogy with the ``pure" configuration space, where each piece~$L_i$ is unparametrised but labelled. 
We use the notation~$L$ for both basepoints, as it is always clear as to which space we are working in.

As discussed at the end of Section~\ref{section - semi simplicial},~$\plink(L)$ is a finite cover of~$\link(L)$.

\subsection{Separating Systems}

\begin{defn}
A \emph{sphere system} for an embedding $\rho\in \link(L)$ or $\plink(L)$ is an embedding of a disjoint union of spheres $\Sigma\colon\sqcup_{i=1}^kS^2\hookrightarrow \R^3\setminus \rho$ in~$\uemb(\sqcup_{i=1}^kS^2,\R^3).$
\end{defn}

\begin{rem}
Note that although the pieces are labelled in~$\plink(L)$, we do not require the spheres in a separating system for~$\rho \in \plink(L)$ to be labelled.
\end{rem}

\begin{defn}\label{def-Separating System}
A \emph{separating system} $\Sigma$ for $\rho\in \link(L)$ or $\plink(L)$ is a sphere system such that each component of $\R^3\setminus \Sigma$ contains at most one piece of $\rho$. We call $\Sigma$ \emph{essential} if no connected component of $\R^3\setminus (\rho\cup \Sigma)$ is homeomorphic to $\Int(\B^3)$ nor $\Int(S^2\times I)$.
\end{defn}

The requirements for an essential separating system in Definition \ref{def-Separating System} are equivalent to the following:
\begin{itemize}
\item every sphere bounds a ball with at least one piece~$L_i$ inside it;
\item no two spheres are isotopic;
\item there is no sphere which bounds a ball containing the whole link.
\end{itemize}

For a link with~$n$ pieces, there must be at least~$n-1$ spheres in an essential separating system, but in general there may be more than~$n$, subject to the rules above. Examples of separating systems for a link with 3 pieces are shown in Figure \ref{fig:SeparatingExample}.

\begin{figure}[h]
\begin{tikzpicture}[framed, scale=0.6]
 \shade[ball color = cyan!40, opacity = 0.2] (0,0) circle (1.5cm);
 \shade[ball color = cyan!40, opacity = 0.4] (-.4,-.35) circle (0.35cm);
 \shade[ball color = cyan!40, opacity = 0.4] (2.5,2) circle (0.75cm);
 \draw (0.1,.2) arc (-20:320:.3cm and .2cm);
 \draw (0,.35)  arc (-160:180:.3cm and .4cm);
 \draw (2.35,2.12) arc (60:390:.2cm and .1cm);
 \draw[rotate=18] (2.9,1.1)  arc (-160:180:.3cm and .2cm);
 \draw[rotate=40] (-0.65,-.05) circle [x radius=.1cm, y radius=.2cm];
 \shade[ball color = pink!40, opacity = 0] (1.5,.3) circle (2.75cm);
 
\end{tikzpicture}
\qquad
\begin{tikzpicture}[framed, scale=0.6]
 \shade[ball color = amber!40, opacity = 0.2] (0,0) circle (1.5cm);
 \shade[ball color = amber!40, opacity = 0.4] (.25,.3) circle (0.8cm);
 \shade[ball color = amber!40, opacity = 0.4] (2.5,2) circle (0.75cm);
 \shade[ball color = pink!40, opacity = 0] (1.5,.3) circle (2.75cm);
 \draw (0.1,.2) arc (-20:320:.3cm and .2cm);
 \draw (0,.35)  arc (-160:180:.3cm and .4cm);
 \draw (2.35,2.12) arc (60:390:.2cm and .1cm);
 \draw[rotate=18] (2.9,1.1)  arc (-160:180:.3cm and .2cm);
 \draw[rotate=40] (-0.65,-.05) circle [x radius=.1cm, y radius=.2cm];

\end{tikzpicture}
\\ \vspace{1em}
\begin{tikzpicture}[framed, scale=0.6]
 \shade[ball color = cyan!40, opacity = 0] (0,0) circle (1.5cm);
 \shade[ball color = burlywood!40, opacity = 0.4] (-.6,-.5) circle (0.35cm);
 \shade[ball color = burlywood!40, opacity = 0.4] (.25,.45) circle (0.8cm);
 \shade[ball color = burlywood!40, opacity = 0.4] (2.5,2) circle (0.75cm);
 \shade[ball color = pink!40, opacity = 0] (1.5,.3) circle (2.75cm);
 \draw (0.1,.2) arc (-20:320:.3cm and .2cm);
 \draw (0,.35)  arc (-160:180:.3cm and .4cm);
 \draw (2.35,2.12) arc (60:390:.2cm and .1cm);
 \draw[rotate=18] (2.9,1.1)  arc (-160:180:.3cm and .2cm);
 \draw[rotate=40] (-0.65,-.05) circle [x radius=.1cm, y radius=.2cm];
 
\end{tikzpicture}
\qquad
\begin{tikzpicture}[framed, scale=0.6]
 \shade[ball color = cyan!40, opacity = 0] (0,0) circle (1.5cm);
 \shade[ball color = pink!60, opacity = 0.4] (.27,0) circle (1.2cm);
 \shade[ball color = pink!40, opacity = 0.4] (-.4,-.35) circle (0.35cm);
 \shade[ball color = pink!40, opacity = 0.2] (1.5,.3) circle (2.75cm);
 \draw (0.1,.2) arc (-20:320:.3cm and .2cm);
 \draw (0,.35)  arc (-160:180:.3cm and .4cm);
 \draw (2.35,2.12) arc (60:390:.2cm and .1cm);
 \draw[rotate=18] (2.9,1.1)  arc (-160:180:.3cm and .2cm);
 \draw[rotate=40] (-0.65,-.05) circle [x radius=.1cm, y radius=.2cm];

\end{tikzpicture}

 \caption{Separating systems for a link $\rho \in \link(L)$ with three pieces.  Only the bottom right separating system is not essential, because the unbounded component is homeomorphic to $\Int(S^2\times I)$.}
 \label{fig:SeparatingExample}
\end{figure}
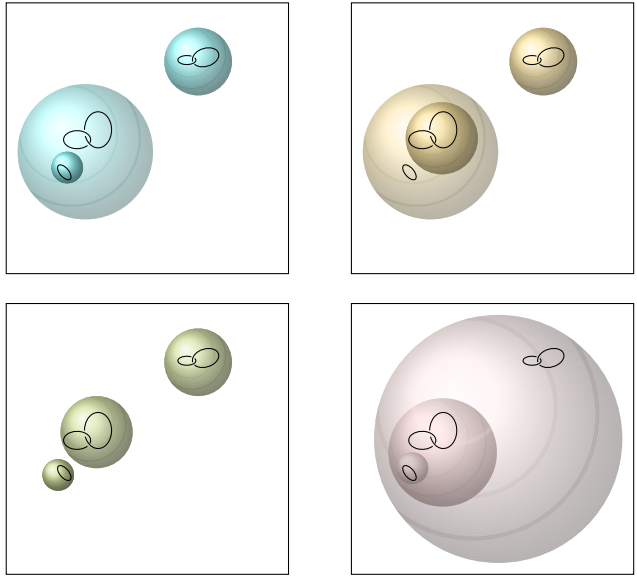

\begin{exam}\label{example - basepoint SS}
For our chosen basepoint $L$ in either~$\link(L)$ or~$\plink(L)$, recall that the pieces sit inside the balls $B_i$ of radius $\frac{1}{2}$ centered on $(2i,0,0)\in \R^3$, for~$1\leq i \leq n$. Then taking the unparametrised embedding which has image the boundary spheres of the~$B_i$ gives a canonical separating system~$\S_L$ for~$L$.
\end{exam}

We now consider embeddings in~$\link(L)$ together with a separating system $\Sigma$. For~$k$ a positive integer, consider the subspace
\[
\ESS(L,k)\subseteq\uemb(\sqcup_m S^1\sqcup_k S^2, \R^3)
\]
such that the image~$\rho$ of~$\sqcup_m S^1$ is isotopic to~$L$, and the image of~$\sqcup_kS^2$ gives an essential separating system for $\rho$ with $k$ spheres \emph{i.e.,}
\[\ESS(L,k)=\{(\rho,\Sigma)\mid \rho\in \link(L),~ \text{$\Sigma$ essential, separating system for~$\rho$ and } |\Sigma|=k\}.\]
Let~$\ESS(L)$ be the space
\[
\ESS(L)=\bigsqcup_{k=1}^\infty \ESS(L,k)
\]
of {essential separating systems} for links isotopic to~$L$, \emph{i.e.,} 
\[\ESS(L)=\{(\rho,\Sigma)\mid \rho\in \link(L),~ \text{$\Sigma$ essential separating system for~$\rho$}\}.\]
We can similarly define a space of labelled links and separating systems:
\[\PESS(L)=\{(\rho,\Sigma)\mid \rho\in \plink(L),~ \text{$\Sigma$ essential separating system for~$\rho$}\}.\]

The point~$(L,\S_L)$ is a natural basepoint for~$\ESS(L)$ and $\PESS(L)$.

\subsection{Combinatorics of separating systems}
We associate a rooted tree to each essential separating system, which is labelled when the link is.  Although  there are only finitely many possible such rooted trees up to isomorphism, there are infinitely many isotopy classes of essential separating systems in $\R^3\setminus \rho$ for each $\rho\in \link(L)$. We will make use of these rooted trees in Section \ref{section - the fundamental group} when we compute the fundamental group of $\link(L)$.
\begin{defn}\label{def-combinatorial type} Recall that ~$\rho=\rho_1\sqcup \ldots\sqcup \rho_n \in \plink(L)$ has a labelling of its pieces by the set~$\{1,\ldots,n\}$. Without loss of generality,~$\rho_i$ is labelled by $i$ and satisfies~$\rho_i\simeq L_i$. 
The \emph{combinatorial type} of a separating system $\Sigma$ for $\rho$ is the labelled rooted graph $T_\Sigma)$ whose vertices are connected components of $\R^3\setminus \Sigma$, and where an edge connects two vertices if they are separated by a single sphere of $\Sigma$. Let the root of the graph be the unique unbounded component of the complement. The other vertices are labelled by $i$ if the unique labelled link piece contained in that component is $\rho_i$ (and hence labelled by~$i$) and by $\emptyset$ otherwise. 
\end{defn}
\begin{figure}[h]
\begin{tikzpicture}[ scale=0.6]
 \shade[ball color = burlywood!40, opacity = 0.2] (0,0) circle (1.5cm);
 \shade[ball color = burlywood!40, opacity = 0.4] (-.6,-.5) circle (0.35cm);
 \shade[ball color = burlywood!40, opacity = 0.4] (.25,.45) circle (0.8cm);
 \shade[ball color = burlywood!40, opacity = 0.4] (2.5,2) circle (0.75cm);
 \shade[ball color = pink!40, opacity = 0] (1.5,.3) circle (2.75cm);
 \draw (0.1,.2) arc (-20:320:.3cm and .2cm);
 \draw (0,.35)  arc (-160:180:.3cm and .4cm);
 \draw (2.35,2.12) arc (60:390:.2cm and .1cm);
 \draw[rotate=18] (2.9,1.1)  arc (-160:180:.3cm and .2cm);
 \draw[rotate=40] (-0.65,-.05) circle [x radius=.1cm, y radius=.2cm];
 \draw[thick](-1,3)--(-.85,.4);
 \draw[thick](-1,3)--(2.5,2);
 \draw[thick](-.85,.4)--(-.6,-.5);
 \draw[thick](-.85,.4)--(.25,.45);
 \filldraw (-1,3) circle(2pt);
 \draw (-1, 3) circle(4pt);
 \filldraw (-.85,.4) circle(2pt);
 \filldraw (2.5,2) circle(2pt);
 \filldraw (-.6,-.5) circle(2pt);
 \filldraw (.25,.45) circle(2pt);
\node at (3.1,2) {$\scriptstyle 3$};
 \node at (.05,-.7) {$\scriptstyle1$};
 \node at (.8,.45) {$\scriptstyle2$};

\end{tikzpicture}
\qquad
\begin{tikzpicture}[ scale=0.6]
 \shade[ball color = cyan!40, opacity = 0] (0,0) circle (1.5cm);
 \shade[ball color = burlywood!40, opacity = 0] (-.6,-.5) circle (0.35cm);
 \shade[ball color = burlywood!40, opacity = 0] (.25,.45) circle (0.8cm);
 \shade[ball color = burlywood!40, opacity = 0] (2.5,2) circle (0.75cm);
 \shade[ball color = pink!40, opacity = 0] (1.5,.3) circle (2.75cm);
 \draw[thick](-1,3)--(-.85,.4);
 \draw[thick](-1,3)--(2.5,2);
 \draw[thick](-.85,.4)--(-.6,-.5);
 \draw[thick](-.85,.4)--(.25,.45);
 \filldraw (-1,3) circle(2pt);
 \draw (-1, 3) circle(4pt);
 \filldraw (-.85,.4) circle(2pt);
 \filldraw (2.5,2) circle(2pt);
 \filldraw (-.6,-.5) circle(2pt);
 \filldraw (.25,.45) circle(2pt);
 \node at (-1.35,3) {$\scriptstyle \emptyset$};
 \node at (2.85,2) {$\scriptstyle3$};
 \node at (-.25,-.7) {$\scriptstyle1$};
 \node at (.65,.45) {$\scriptstyle2$};
 \node at (-1.1,.4) {$\scriptstyle \emptyset$};
 
\end{tikzpicture}
\caption{An essential separating system $\Sigma$ for~$\rho=\rho_1\sqcup \rho_2\sqcup \rho_3 \in \plink(L)$ and its dual rooted $L$-tree $T_\Sigma$.}
\label{fig:DualTree}
\end{figure}
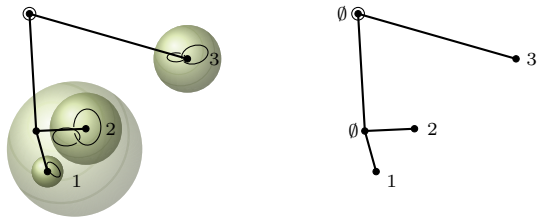
\begin{rem}\label{rem:CombLTree}
Since an embedded sphere separates $\R^3$ into exactly two components and the spheres of $\Sigma$ are disjoint, $T_\Sigma$ is always a tree. Observe that $\Sigma$ is essential if and only if there does not exist a univalent vertex labelled by $\emptyset$, nor a bivalent vertex, unless that vertex is the root.  
\end{rem}

\begin{rem}
One could similarly define the combinatorial type of a separating system for~$\rho \in \link(L)$, but some care must be taken with labels -- for example one could label by isotopy classes of links, since we do not differentiate between isotopic pieces. Since in Section \ref{section - the fundamental group} we only need the notion of combinatorial type for labelled links, we do not explore this further.
\end{rem}

\begin{defn}\label{def:Ltree}
For $n\geq 1$, a \emph{rooted $L$-tree} is a rooted tree whose vertices carry labels in the set $\{1,\ldots, n,\emptyset\}$ such that 
\begin{enumerate}
\item Labels $1,\ldots, n$ appear exactly once.  
\item There are no univalent or bivalent vertices labelled by $\emptyset$, unless it is a bivalent vertex which is also the root.

\end{enumerate}
\end{defn}
Any vertex can serve as the root of a rooted $L$-tree. The root is distinguished by a circled vertex as in Figure~\ref{fig:DualTree}.

\begin{lem}\label{lem:FinTree}
There are finitely many rooted $L$-trees.

\begin{proof}
Observe that for any rooted $L$-tree $T$, the number of vertices and edges will each increase by 1 under the following operations
\begin{itemize}
\item Making an interior $i$-vertex into a leaf, as in Figure~\ref{fig:L_i Leaf},
\item Splitting an interior $\emptyset$-vertex of valence at least 4 into two $\emptyset$-vertices of valence at least 3 with an edge between them, as in Figure~\ref{fig:Splitting empty}.
\item Splitting an $\emptyset$-labelled root vertex of valence at least 3 into two $\emptyset$-labelled vertices, one of which is the new root and has valence at least 2, while the other has valence at least 3, as in Figure~\ref{fig:Splitting root}. 
\end{itemize}
As a consequence, the maximal number of edges occurs when each $i$-vertex is a leaf, each interior non-root $\emptyset$-vertex has valence 3, and the root is labelled by $\emptyset$ and has valence 2. An Euler characteristic calculation shows that the maximal number of vertices in such a tree is $2n-1$. Hence, there are only finitely many such $T$.
\end{proof}
\end{lem}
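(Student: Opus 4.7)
The plan is to reduce finiteness of rooted $L$-trees to a uniform vertex bound: once I show every such tree has at most $2n-1$ vertices, finiteness is immediate, since there are only finitely many trees on at most $2n-1$ vertices, finitely many ways to label them from the finite set $\{1,\ldots,n,\emptyset\}$ subject to condition (1), and finitely many choices of root.

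To obtain the vertex bound, I would apply the degree-sum formula $\sum_v \deg(v) = 2(V-1)$ to an $L$-tree with $V$ vertices. Write $V = n + n_\emptyset$, where $n$ is the number of vertices carrying a label in $\{1,\ldots,n\}$ and $n_\emptyset$ is the number of $\emptyset$-labelled vertices. By condition (2) of Definition~\ref{def:Ltree}, every $\emptyset$-labelled vertex has degree at least $3$, with the sole exception of the root, which is permitted to be bivalent when $\emptyset$-labelled; labelled vertices trivially contribute at least $1$ to the degree sum. In the extremal case where the root is $\emptyset$-labelled of degree exactly $2$, substitution yields
\[
2(n + n_\emptyset) - 2 \;\geq\; n + 2 + 3(n_\emptyset - 1) \;=\; n + 3n_\emptyset - 1,
\]
which rearranges to $n_\emptyset \leq n-1$, hence $V \leq 2n-1$. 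In the remaining cases (root labelled, or root $\emptyset$-labelled of degree $\geq 3$) every $\emptyset$-vertex has degree $\geq 3$, and the analogous inequality gives the strictly smaller bound $V \leq 2n-2$.

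I do not anticipate a serious obstacle; the argument is essentially a one-line Euler characteristic calculation. The only subtlety is bookkeeping around the exceptional bivalent root, which is precisely what makes the bound $2n-1$ sharp — realised by any tree in which the $n$ labels sit at leaves, every non-root interior $\emptyset$-vertex has degree $3$, and the root is $\emptyset$-labelled and bivalent.
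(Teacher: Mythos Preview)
Your proof is correct and takes essentially the same approach as the paper: both arguments bound the number of vertices by $2|L|-1$ via an Euler characteristic computation, then invoke finiteness of labelled trees on boundedly many vertices. The paper reaches the extremal configuration (all $i$-vertices leaves, all non-root $\emptyset$-vertices trivalent, root bivalent and $\emptyset$-labelled) by describing three ``expansion'' moves that strictly increase the vertex count, whereas you extract the same bound directly from the handshaking inequality $\sum_v\deg(v)=2(V-1)$; this is precisely the ``Euler characteristic calculation'' the paper alludes to but does not spell out.
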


\begin{figure}[h!]
     
     \begin{subfigure}[b]{\textwidth}
     \centering
        \begin{tikzpicture}[scale=0.6]
        \filldraw (0,0) circle(4pt);
        \draw[thick](0,0)--(-1.5,-1);
        \draw[thick](0,0)--(-.25,-1);
        \draw[thick](0,0)--(1,-1);
        \draw[thick](0,0)--(1,1.5);
        \node at (3,0) {$\Rightarrow$};
        \node at (-.25,.3){$\scriptstyle i$};
        \end{tikzpicture}
        \qquad
        \begin{tikzpicture}[scale=0.6]
        \filldraw (0,0) circle(4pt);
        \filldraw (-1,.75) circle(4pt);
        \draw[thick](0,0)--(-1.5,-1);
        \draw[thick](0,0)--(-.25,-1);
        \draw[thick](0,0)--(1,-1);
        \draw[thick](0,0)--(1,1.5);
        \draw[thick](0,0)--(-1,.75);
        \node at (-1.3,1.05){$\scriptstyle i$};
        \node at (.35,0){$\scriptstyle \emptyset$};
        \end{tikzpicture}
         \caption{Creating an $L_i$-labelled leaf.}
         \label{fig:L_i Leaf}
     \end{subfigure}
     \vspace{1em}
     
     \begin{subfigure}[b]{\textwidth}
        \centering
         \begin{tikzpicture}[scale=0.6]
        \filldraw (0,0) circle(4pt);
        \draw[thick](0,0)--(-1.2,1);
        \draw[thick](0,0)--(-1.3,.25);
        \draw[thick](0,0)--(-1.4,-.75);
        \draw[thick](0,0)--(1.2,.8);
        \draw[thick](0,0)--(1.2,-.4);
        \node at (0,.4){$\scriptstyle\emptyset$};
        \node at (3,0) {$\Rightarrow$};
        \end{tikzpicture}
        \qquad
        \begin{tikzpicture}[scale=0.6]
        \filldraw (0,0) circle(4pt);
        \filldraw (2,0) circle(4pt);
        \draw[thick](0,0)--(-1.2,1);
        \draw[thick](0,0)--(-1.3,.25);
        \draw[thick](0,0)--(-1.4,-.75);
        \draw[thick](2,0)--(3.2,.8);
        \draw[thick](2,0)--(3.2,-.4);
        \draw[thick](0,0)--(2,0);
        \node at (0,.4){$\scriptstyle\emptyset$};
        \node at (2,.4){$\scriptstyle\emptyset$};
        \end{tikzpicture}
         \caption{Splitting an $\emptyset$-labelled vertex.}
         \label{fig:Splitting empty}
     \end{subfigure}
         \vspace{1em}
         
     \begin{subfigure}[b]{\textwidth}
         \centering
            \begin{tikzpicture}[scale=0.6]
        \filldraw (0,0) circle(4pt);
        \draw (0,0) circle (6pt);
        \draw[thick](0,0)--(-1.2,1);
        \draw[thick](0,0)--(-1.4,-.75);
        \draw[thick](0,0)--(1.5,0);
        \node at (0,.5){$\scriptstyle\emptyset$};
        \node at (3,0) {$\Rightarrow$};
        \end{tikzpicture}
        \qquad
        \begin{tikzpicture}[scale=0.6]
        \filldraw (0,0) circle(4pt);
        \filldraw (2,0) circle(4pt);
        \draw (2,0) circle (6pt);
        \draw[thick](0,0)--(-1.2,1);
        \draw[thick](0,0)--(-1.4,-.75);
        \draw[thick](0,0)--(2,0);
        \draw[thick](2,0)--(3,0);
        \node at (0,.5){$\scriptstyle\emptyset$};
        \node at (2,.5){$\scriptstyle\emptyset$};
        
        \end{tikzpicture}
        
         \caption{Splitting an $\emptyset$-labelled root vertex.}
         \label{fig:Splitting root}
     \end{subfigure}
        \caption{Three moves on $L$-trees.}
        \label{fig:three moves}
\end{figure}
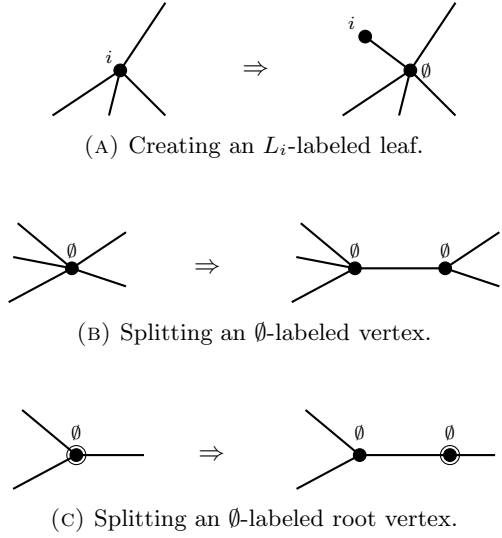

\begin{lem}\label{lem:ComType} For any $\rho\in \plink(L)$, there is a bijection between combinatorial types of essential separating systems for $\rho$ and rooted $L$-trees. 

\begin{proof}
Since any two embeddings $\rho, ~\rho'\in\plink(L)$ are ambient isotopic, there is a bijection between combinatorial types of essential separating systems for $\rho$ and $\rho'$.  Thus, it suffices to prove the lemma for the fixed embedding $L\in \plink(L)$. Recall that the image of $L_i$ lies in a ball of radius $1/2$ centered on the point $(2i,0,0)\in \R^3$.

By Remark \ref{rem:CombLTree}, for any essential separating system $\Sigma$, we know that $T_\Sigma$ is a rooted $L$-tree. Now let $T$ be any rooted $L$-tree. We claim that there exists an essential separating system $\Sigma$ for $L$ such that $T_\Sigma=T$.

The proof is by induction on the number of pieces of $L$.  If $L$ has one piece,  the only rooted $L$-tree is where $T$ is a single rooted vertex. In this case the only essential separating system is empty. Hence this realises $T$ and uniqueness up to isotopy is vacuous.  Now suppose that there are $n\geq 2$ pieces. Then $T$ has at least two vertices and since it is a tree, it has at least leaf  $w$ which is not the root. Reordering the link pieces if necessary, we may assume that $w$ is labelled by $n$. Let $v$ be the vertex adjacent to $w$. Deleting $w$ and its edge from $T$ yields a tree $T'$ labelled by $1,\ldots, {n-1}$ and $\emptyset$.   Condition (1) of Definition~\ref{def:Ltree} is still satisfied, as well as condition (2), unless 
\begin{enumerate}[(i)]
\item $v$ is not the root, is labelled by $\emptyset$, and has valence 3 in $T$, with adjacent vertices~$u_1, u_2$ and $w$. 
\item $v$ is the root, is labelled by $\emptyset$ and has valence 2 in $T$.
\end{enumerate}

If (i) occurs, we set $T'$ to be the tree where the star of $v$ has been replaced by a single edge between the vertices~$u_1$ and~$u_2$. If (ii) occurs,  we set $T'$ to be the tree where $v$ and~$w$ are deleted (along with the edges that have an end point at~$v$) and the other vertex $w'$ adjacent to $v$ is now the root.

By induction, we can find $\Sigma'$ realising $T'$ for $L_1\sqcup\cdots \sqcup L_{n-1}$. First, assume that we do not fall into either of the cases (i) or (ii) above.  Then $v$ is still a vertex in $T'$. Let $U$ be the component of $\R^3\setminus \Sigma'$ corresponding to $v$. Now choose a point $p\in U\setminus (L_1\sqcup\cdots \sqcup L_{n-1})$ and a small closed ball $B\subset U$ centered on $p$. Choose $\tau\in\link(L_n)$ such that the image of $\tau$ lies in the interior of $B$.  There is an isotopy simultaneously taking $B$ to the ball of radius $1/2$ centered on $2n$ and the image of $\tau$ onto $L_n$. Now apply isotopy extension to obtain an ambient isotopy of $\R^3$, from which we obtain a sphere system $\Sigma$ realising $T$, where~$w$ corresponds to the interior of~$B$, and we add the boundary of this ball to obtain~$\S$.

If we are in case (i),  we take two parallel copies of the sphere of $\Sigma'$ corresponding to the edge added between the vertices~$u_1$ and~$u_2$ and set $U$ to be the component in between these two parallel copies.  If we are in case (ii), then $w'$ is the root, hence corresponds to the unbounded component. Add a large sphere surrounding all of $\Sigma'$ and set $U$ to be the unbounded complement of this sphere. In each case, we then choose a ball $B\subset U$ and $\tau\in \link(L_n)$ whose image lies in $B$ and proceed as above. This completes the proof.
\end{proof}
\end{lem}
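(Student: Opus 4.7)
The plan is to first reduce to the basepoint $L \in \plink(L)$: any $\rho \in \plink(L)$ is ambient isotopic to $L$, and an ambient isotopy carries essential separating systems to essential separating systems while preserving both combinatorial type and disjointness. Hence both claims for an arbitrary $\rho$ follow from the corresponding claims for $L$. I would then prove both halves of the lemma simultaneously by induction on $n = |L|$; the base case $n = 1$ is immediate, since the empty separating system is the only essential one and a single rooted vertex labelled $1$ is the only rooted $L$-tree. Note also that the injectivity half of the bijection is essentially tautological, since ``combinatorial type'' is defined to be the isomorphism class of $\Gamma(\Sigma)$, so the content is (a) well-definedness, handled by Remark~\ref{rem:CombLTree}, and (b) surjectivity onto rooted $L$-trees.

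For surjectivity, given a rooted $L$-tree $T$ with $n \geq 2$ labels, I would pick a leaf $w$ distinct from the root and, after relabelling, assume it carries label $n$; let $v$ be its unique neighbour. Deleting $w$ and its incident edge produces a smaller tree $T'$ on labels $\{1,\ldots,n-1,\emptyset\}$ which almost satisfies Definition~\ref{def:Ltree}, failing only in the degenerate cases where (i) $v$ is an $\emptyset$-labelled non-root vertex of valence~$3$ in $T$ (so becomes bivalent in $T'$), or (ii) $v$ is the $\emptyset$-labelled root of valence~$2$ in $T$. In the generic case I would use induction to realise $T'$ as the combinatorial type of an essential separating system $\Sigma'$ for $L_1 \sqcup \cdots \sqcup L_{n-1}$, choose a small closed ball $B$ in the complementary region corresponding to $v$, place a copy of $L_n$ inside $B$, and use isotopy extension to move $B$ onto the standard ball around $(2n,0,0)$; then $\Sigma = \Sigma' \cup \partial B$ realises $T$. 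In the degenerate cases I would first pass to a reduced rooted $L$-tree $T''$ by contracting the bivalent vertex (case (i)) or by deleting both $v$ and $w$ and promoting $v$'s other neighbour to the root (case (ii)), apply induction to $T''$, and then reintroduce the missing sphere as a parallel pushoff (case (i)) or as a large sphere enclosing everything (case (ii)) before attaching $\partial B$. I expect the bookkeeping around these degenerate cases to be the main obstacle.

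For uniqueness rel~$\rho$, suppose $\Sigma, \Sigma'$ are disjoint essential separating systems with $\Gamma(\Sigma) = \Gamma(\Sigma') = T$, and let $S \subset \Sigma$, $S' \subset \Sigma'$ be the spheres corresponding to the edge at the leaf $w$. Each of $S$ and $S'$ separates $L_n$ from the remaining pieces, and since they are disjoint from one another and from $\rho$, they cobound a region homeomorphic to $S^2 \times I$ whose interior misses $\rho$. A standard ambient isotopy of $\R^3$ supported in a neighbourhood of this shell carries $S$ onto $S'$ while fixing $\rho$ pointwise and keeping the other spheres of $\Sigma \cup \Sigma'$ disjoint. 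After this isotopy, $\Sigma \setminus S$ and $\Sigma' \setminus S'$ are disjoint essential separating systems for the smaller link $L_1 \sqcup \cdots \sqcup L_{n-1}$ with the same combinatorial type, and the induction closes the argument. The only care needed is again in the degenerate cases, where one must verify that after removing the sphere corresponding to the edge~$vw$ the remaining data is genuinely a pair of disjoint essential separating systems for the smaller link to which the inductive hypothesis applies verbatim.
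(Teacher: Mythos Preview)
Your proposal is correct and follows essentially the same approach as the paper: reduction to the basepoint, induction on $n$ via deleting a non-root leaf, the same two degenerate cases handled by contracting or deleting the offending $\emptyset$-vertex and then reintroducing the missing sphere as a parallel pushoff or an enclosing sphere, and the uniqueness argument via the nested-spheres/$S^2\times I$ observation. Your added remark about checking that the inductive hypothesis applies verbatim in the degenerate cases of the uniqueness argument is a point the paper glosses over, so you are if anything slightly more careful.
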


\subsection{Forgetting embedded balls} In this section we relate the homotopy type of $\link(L)$ to that of the space of embeddings of a link $L$ together with a disjoint collection of points or 3-balls. We will use the results of this section to obtain our description of the fundamental group of $\link(L)$ in Section \ref{section - the fundamental group}.

Given a manifold $M$ let $\pconf_k(M)$ denote the configuration space of $k$ \emph{ordered} (or labelled) points in~$M$. We will use the notation $\{p_1,\ldots, p_k\}$ for a point in this space, and  $\uemb(L\sqcup\{p_1,\ldots,p_k\},\R^3)$ for the space of unparametrised embeddings of $\sqcup_mS^1\sqcup\{p_1,\ldots,p_k\}$ where the $k$ points are ordered and the image of $\sqcup_mS^1$ is isotopic to $L$. 

If instead of points we consider a disjoint union of $k$ labelled 3-balls, we write $B_1\sqcup\cdots\sqcup B_k$ and obtain an embedding space $\uemb(L\sqcup B_1\sqcup\cdots \sqcup B_k,\R^3)$. In terms of parametrised embeddings, we can describe  $\uemb(L\sqcup B_1\sqcup\cdots\sqcup B_k,\R^3)$ as the component of \[\emb(\sqcup_m S^1\sqcup B_1\sqcup\cdots\sqcup B_k,\R^3)/\left(\Diff(\sqcup_mS^1)\times \Diff(B_1)\times\cdots\times \Diff(B_k)\right)\]
where the image of $\sqcup_mS^1$ is isotopic to $L$.  That is, we do not quotient by the elements of $\Diff(\sqcup_k B^3)$ that permute distinct balls.

\begin{lem}\label{lem:equiv between balls and config}The map which takes centerpoints yields the equivalence
\[ \uemb(B_1\sqcup\cdots\sqcup B_k,\R^3)\simeq \pconf_k(\R^3)
\]
\end{lem}
\begin{proof}
    By linearisation, there is an equivalence
\[
\emb(B_1\sqcup\cdots\sqcup B_k,\R^3)\simeq \pconf^{\rm framed}_k(\R^3)
\]
where for each embedded ball the framed point in $\R^3 \times \GL(3)$ is given by taking the image and derivative of the embedding at $0\in B^3$. Since $\Diff(B_1)\times\cdots\times \Diff(B_k)\simeq \GL(3)^k$ taking the quotient by this group yields the result.
\end{proof}

 \begin{lem}\label{lem:Balls Complement} Let $L$ be any (possibly empty) link.  For any $k\geq 1$ there exists a homotopy equivalence
 \begin{equation}
   \uemb(L\sqcup B_1\sqcup\cdots\sqcup B_k,\R^3)\xrightarrow[]{\simeq}\uemb(L\sqcup\{p_1,\ldots,p_k\},\R^3)
 \end{equation}
 \end{lem}
 \begin{proof} 
Consider the Palais fibrations (see, e.g.~\cite[Corollary 2.5]{BoydBregmanSteinebrunner24})
\[ \uemb(L \sqcup B_1\sqcup\cdots\sqcup B_k,\R^3)\rightarrow \uemb(B_1\sqcup\cdots\sqcup B_k,\R^3)
 \]
 and 
 \[ \uemb(L \sqcup\{p_1,\ldots,p_k\},\R^3)\rightarrow \uemb(\{p_1,\ldots,p_k\},\R^3)=\pconf_k(\R^3).
 \]
 By Lemma \ref{lem:equiv between balls and config}, the bases are homotopy equivalent, and the fibers are equivalent, since $\R^3\setminus (B_1\sqcup\cdots\sqcup B_k)$ is homeomorphic to $\R^3\setminus \{p_1,\ldots, p_k\}$.
\end{proof}

\section{Semi-simplicial spaces of separating systems}\label{section - semi simplicial}
This section introduces the main semi-simplicial space we work with, and contains the semi-simplicial arguments we require to prove Theorem~\ref{abcthm - sep equiv}. 

Recall that \[\ESS(L)=\{(\rho,\Sigma)\mid \rho\in \link(L),~ \text{$\Sigma$ essential separating system for~$\rho$}\},\] 
topologised as a subspace of the product of unparametrised embedding spaces, with the~$C^\infty$ Whitney topology, and $\PESS(L)$ is the analogously defined space for  links in $\plink(L)$.

\begin{defn}\label{defn - sep and psep}
The semi-simplicial space $\Sep_\bt$ is defined as follows:
\begin{itemize}
    \item $\Sep_0=\ESS(L)$.
    \item $\Sep_p\subseteq (\Sep_0)^{p+1}$ consists of ordered~$(p+1)$ tuples~$(\rho_i,\Sigma_i)$ such that~$\rho_i=\rho_j$ for all~$0\leq i,j\leq p$ and~$\Sigma_i \cap \Sigma_j=\emptyset$ for all~$i\neq j$.  \item Face maps $\p_p^{i}\colon \Sep_p\rightarrow \Sep_{p-1}(L)$ for $0\leq i\leq p$ are given by forgetting the~$i$th entry in a tuple.
\end{itemize}
The semi-simplicial space $\pSep_\bt$ is defined analogously, replacing~$\ESS(L)$ with $\PESS(L)$.
\end{defn}

There is a natural augmentation~$\varepsilon$ to the embedding space~$\link(L)$ (which can be thought of as $\pSep_{-1}$) given by forgetting the sphere systems, and this is indeed an augmentation since it commutes with face maps. The  fiber of this augmentation for a fixed~$\rho\in \link(L)$ is itself a semi-simplicial space which levelwise is given by $\varepsilon^{-1}_p(\rho)$, and has face maps inherited from~$\Sep_\bt$. We denote this semi-simplicial space by $\Sep(\rho)_\bt$.  

\begin{lem}For any two embeddings $\rho, \rho' \in \link(L)$, $\Sep(\rho)_\bt\cong \Sep(\rho')_\bt$.

\begin{proof}
There exists an ambient isotopy $\Phi$ of $\R^3$ sending $\rho$ to $\rho'$.  Then $\Phi$ induces a bijective correspondence between separating systems for $\rho$ and separating systems for $\rho'$.  This correspondence clearly takes essential separating systems to essential separating systems, preserving disjointness and combinatorial type. Thus $\Phi$ induces an isomorphism $\Phi_\bt\colon \Sep(\rho)_\bt\rightarrow \Sep(\rho')_\bt$.
\end{proof}
\end{lem}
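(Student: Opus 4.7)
The plan is to exploit the fact that $\rho$ and $\rho'$ lie in the same path component $\link(L)$, together with isotopy extension, to produce an ambient diffeomorphism of $\R^3$ that interchanges their respective spaces of separating systems.

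First I would choose a path from $\rho$ to $\rho'$ in $\link(L)$, which by standard isotopy extension arguments in codimension~$2$ can be lifted to an ambient smooth isotopy $\Phi_t$ of $\R^3$ with $\Phi_0=\id$ and $\Phi_1(\rho)=\rho'$. Writing $\Phi=\Phi_1\in\Diff(\R^3)$, the pushforward $\Sigma\mapsto \Phi(\Sigma)$ is a diffeomorphism from the space of embedded sphere systems in $\R^3\setminus \rho$ to the space of embedded sphere systems in $\R^3\setminus \rho'$, with inverse given by $\Phi^{-1}$. Because $\Phi$ is an ambient diffeomorphism it carries the complementary components of $\Sigma$ in $\R^3$ bijectively to those of $\Phi(\Sigma)$, sending each piece $\rho_i$ of $\rho$ to the corresponding piece of $\rho'$. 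Consequently $\Phi$ preserves the property of being a separating system, the property of being essential (by the characterisation in terms of combinatorial type, Remark~\ref{rem:CombLTree}), and the combinatorial type itself.

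From this it follows that the assignment $(\rho,\Sigma)\mapsto (\rho',\Phi(\Sigma))$ defines a continuous bijection $\Phi_0\colon \Sep(\rho)_0\rightarrow \Sep(\rho')_0$, with continuous inverse induced by $\Phi^{-1}$, so $\Phi_0$ is a homeomorphism. Because $\Phi$ is a diffeomorphism it preserves disjointness of sphere systems, so the map extends levelwise to bijections
\[
\Phi_p\colon \Sep(\rho)_p \longrightarrow \Sep(\rho')_p, \qquad (\rho,\Sigma_0,\ldots,\Sigma_p)\mapsto (\rho',\Phi(\Sigma_0),\ldots,\Phi(\Sigma_p)),
\]
each of which is a homeomorphism with inverse $(\Phi^{-1})_p$. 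These $\Phi_p$ commute with the face maps $\p_p^i$, since the face maps only forget an entry of the tuple and do not depend on $\Phi$. Therefore $\Phi_\bt$ is an isomorphism of semi-simplicial spaces.

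The only mild subtlety is ensuring that the path in $\link(L)$ — which is a quotient of $\emb(\sqcup_m S^1,\R^3)$ by $\Diff(\sqcup_m S^1)$ — lifts to a genuine isotopy of parametrised embeddings so that the isotopy extension theorem applies. This is standard: one lifts locally through the principal $\Diff(\sqcup_m S^1)$-bundle and, since only the image of the embedding enters the construction, any reparametrisation is absorbed without affecting the induced map on sphere systems. With this in hand the rest of the argument is formal.
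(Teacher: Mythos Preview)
Your proposal is correct and follows essentially the same approach as the paper: both use an ambient isotopy (equivalently, the time-$1$ diffeomorphism $\Phi$) carrying $\rho$ to $\rho'$ to transport separating systems, and observe that this preserves essentiality, disjointness, and combinatorial type, hence gives a levelwise isomorphism commuting with face maps. You have simply fleshed out details the paper leaves implicit, including the lift through the $\Diff(\sqcup_m S^1)$-quotient and the verification that $\Phi_\bt$ respects the semi-simplicial structure.
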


\begin{lem} \label{lem - homotopy fiber is amalg of levelwise fibers}
The map $|\Sep(\rho)_\bt|\to hofib_\rho|\varepsilon_\bt|$ is a weak equivalence.
\begin{proof}
By isotopy extension, one can lift an open neighbourhood around any point~$\rho$ in~$\link(L)$ to~$\Sep_p$, and it follows that the augmentation is a level-wise Serre fibration (compare with Section 4.4 in \cite{Kupers}). Now Lemma~2.14 in~\cite{EbertRandalWilliams} shows that since the maps are levelwise quasifibrations then for each~$\rho \in \link(L)$ the natural map
\[
|\Sep(\rho)_\bt|=|\varepsilon^{-1}_\bt(\rho)| \to hofib_\rho|\varepsilon_\bt|
\]
is a weak equivalence. 
\end{proof}
\end{lem}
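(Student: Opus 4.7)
The plan is to reduce the statement to a standard semi-simplicial comparison theorem once we know that $\varepsilon_\bullet$ is levelwise a Serre fibration. Concretely, if each $\varepsilon_p$ is a (quasi)fibration, then Lemma~2.14 of \cite{EbertRandalWilliams} applies directly: it asserts that for such an augmented levelwise-quasifibration the natural map from the levelwise fiber $|\varepsilon_\bullet^{-1}(\rho)|=|\Sep(\rho)_\bullet|$ to $\mathrm{hofib}_\rho|\varepsilon_\bullet|$ is a weak equivalence. So all of the content is in establishing the Serre fibration property; after that, the conclusion is a citation.

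To show $\varepsilon_p\colon\Sep_p\to\link(L)$ is a Serre fibration, I would invoke parametric isotopy extension in the form of local sections. Given any $\rho\in\link(L)$, one produces a neighbourhood $U\subseteq\link(L)$ and a continuous map $U\to\Diff_c(\R^3)$, $\rho'\mapsto\Phi_{\rho'}$, with $\Phi_\rho=\mathrm{id}$ and $\Phi_{\rho'}(\rho)=\rho'$; continuity is with respect to the $C^\infty$-Whitney topology. This is standard for embeddings of compact submanifolds in an open ambient manifold. Given any point $(\rho,\Sigma^0,\ldots,\Sigma^p)\in\varepsilon_p^{-1}(\rho)$, the assignment
\[
\rho'\longmapsto\bigl(\rho',\,\Phi_{\rho'}(\Sigma^0),\ldots,\Phi_{\rho'}(\Sigma^p)\bigr)
\]
is then a local section of $\varepsilon_p$ over $U$. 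Because each $\Phi_{\rho'}$ is an ambient diffeomorphism of $\R^3$, it preserves disjointness of the spheres from one another and from $\rho'$, and it preserves the homeomorphism type of each complementary component, hence also essentiality. Having local sections through every point shows that $\varepsilon_p$ is a Serre fibration, and in particular a quasifibration.

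The main obstacle is just the careful invocation of parametric isotopy extension (continuity of the chosen diffeomorphisms in the $C^\infty$-Whitney topology), which is a well-known technical point; rather than reprove it, I would refer to Section~4.4 of \cite{Kupers}. With $\varepsilon_\bullet$ established as a levelwise Serre fibration, Lemma~2.14 of \cite{EbertRandalWilliams} then immediately yields the desired weak equivalence $|\Sep(\rho)_\bullet|\to\mathrm{hofib}_\rho|\varepsilon_\bullet|$.
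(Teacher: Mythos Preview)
Your proposal is correct and follows essentially the same approach as the paper: use parametric isotopy extension (with the same reference to \cite{Kupers}, Section~4.4) to show $\varepsilon_\bullet$ is a levelwise Serre fibration, then invoke Lemma~2.14 of \cite{EbertRandalWilliams}. You spell out the local section and the preservation of essentiality in slightly more detail than the paper does, but the argument is the same.
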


\begin{defn}
We define some variations of~$\Sep(\rho)_\bt$. Let $\Sep(\rho)^\delta_\bt$ be the semi-simplicial space where~$\Sep(\rho)^\delta_p$ is equal to $\Sep(\rho)_p$ as a set, but is given the discrete topology. Fix a separating system~$\hat{\Sigma}$ for~$\rho$ (not necessarily essential), and let $\Sep(\rho,\hat{\Sigma})^\delta_p\subset \Sep(\rho)^\delta_p$ be the subspace such that the tuples of separating systems
$(\Sigma_0,\ldots,\Sigma_p)$ for~$\rho$ are disjoint from~$\hat{\Sigma}$. Let~$\Sep(\rho,\hat{\Sigma})^\delta_\bt$ be the semi-simplicial space with these levels and face maps induced by those of $\Sep(\rho)^\delta_\bt$.
\end{defn}

For the next proposition, we need the following lemma. We state it here, but reserve the proof until Section \ref{section - contractible}, when we have set up the appropriate notation. 

\begin{lem}\label{lem - discrete missing sep is contractible}
$|\Sep(\rho, \hat{\Sigma})^\delta_\bt|$ is contractible.
\end{lem}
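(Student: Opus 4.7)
The plan is to show that $|\Sep(\rho,\hat{\Sigma})^\delta_\bt|$ is contractible by proving every finite subcomplex is contained in a contractible subcomplex, via a canonical cone construction using that the disjointness constraint from $\hat{\Sigma}$ and from any finite collection of separating systems can always be simultaneously realised.

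Let $K \subset |\Sep(\rho,\hat{\Sigma})^\delta_\bt|$ be any compact subset; since the realisation of a semi-simplicial set is a CW complex, $K$ is contained in a finite subcomplex $K'$ spanned by finitely many vertices $\Sigma_1, \ldots, \Sigma_N$. The key geometric step is to construct an essential separating system $\Sigma^*$ for $\rho$ that is simultaneously disjoint from $\hat{\Sigma}$ and from every $\Sigma_k$. The union $\hat{\Sigma} \cup \Sigma_1 \cup \cdots \cup \Sigma_N$ is closed in $\R^3 \setminus \rho$, and since each piece $\rho_i$ is compact and connected, it lies in a single open component $V_i$ of the complement. I would pick a small closed tubular neighbourhood $N_i \subset V_i$ of $\rho_i$, set $S_i = \partial N_i$, and take $\Sigma^* = S_1 \sqcup \cdots \sqcup S_n$. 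By construction $\Sigma^*$ is disjoint from $\hat{\Sigma}$ and each $\Sigma_k$, and it is an essential separating system for $\rho$ whose combinatorial $L$-tree is the star-type tree of Lemma~\ref{lem:ComType} with one leaf per piece.

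Consequently, for any $p$-simplex $(\Sigma_{k_0}, \ldots, \Sigma_{k_p})$ of $K'$, the tuple $(\Sigma^*, \Sigma_{k_0}, \ldots, \Sigma_{k_p})$ is a valid $(p+1)$-simplex of $\Sep(\rho,\hat{\Sigma})^\delta_\bt$. Hence the semi-simplicial join $\Sigma^* \ast K'$ sits as a subcomplex of $|\Sep(\rho,\hat{\Sigma})^\delta_\bt|$; being a cone with apex $\Sigma^*$ it is contractible, and it contains $K'$. The inclusion $K' \hookrightarrow |\Sep(\rho,\hat{\Sigma})^\delta_\bt|$ is therefore null-homotopic, and so is $K \hookrightarrow |\Sep(\rho,\hat{\Sigma})^\delta_\bt|$. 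Since this holds for every compact $K$, the complex has trivial homotopy groups in all degrees, and being a CW complex, it is contractible.

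I expect the main obstacle to be careful bookkeeping rather than any deep geometric input: one must verify that $\Sigma^*$ as constructed really meets the essentiality conditions (no complementary region is $\Int(\B^3)$ or $\Int(S^2 \times I)$, which follows since each $S_i$ bounds exactly one piece $\rho_i$ and $|L| \geq 2$), and confirm that the semi-simplicial join $\Sigma^* \ast K'$ carries the expected cone homotopy type under the ordered-tuple conventions of Section~\ref{section - semi simplicial}. Neither issue is deep, but both require some care to set up correctly, which presumably accounts for the geometric tools developed in Section~\ref{section - contractible}.
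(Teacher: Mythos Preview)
Your overall strategy---find a single essential separating system $\Sigma^*$ disjoint from $\hat{\Sigma}$ and from the finitely many vertices of any given finite subcomplex, then cone off---is correct and is exactly the approach the paper takes. The gap is in your construction of $\Sigma^*$.

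You set $S_i = \partial N_i$ where $N_i$ is a small tubular neighbourhood of the link piece $\rho_i$. But $\rho_i$ is a (possibly multi-component) link, so $N_i$ is a union of solid tori and $\partial N_i$ is a union of \emph{tori}, not spheres. Thus your $\Sigma^*$ is not a sphere system at all. Even interpreted more charitably as ``find a sphere enclosing $\rho_i$ inside $V_i$'', the argument fails: the only thing the disjointness hypothesis guarantees is that a thin metric neighbourhood of $\rho_i$ avoids all the $\Sigma_k$ and $\hat{\Sigma}$, and inside a thin solid torus there is no embedded sphere bounding a ball that contains the core circle (any sphere in a solid torus bounds a ball there by irreducibility, and the core is not nullhomotopic). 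Since the $\Sigma_k$ need not be pairwise disjoint, the component $V_i$ can be arbitrarily complicated, so there is no easy direct construction of an enclosing sphere for $\rho_i$ inside $V_i$.

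The paper sidesteps this by exploiting the one separating system already at hand: it takes a normal push-off $\epsilon \times \hat{\Sigma}$ in a tubular neighbourhood $(-1,1)\times \hat{\Sigma}$. For $\epsilon$ small enough this is disjoint from $\hat{\Sigma}$ and from the finitely many $\Sigma_k$ (disjointness from a fixed compact set is an open condition), and one then discards spheres as needed to make it essential. That gives the cone point; from there your coning argument goes through unchanged.
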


\begin{prop}\label{prop - moving from discrete topology to whitney}
If $|\Sep(\rho)^\delta_\bt|$ is~$k$-connected, then so is $|\Sep(\rho)_\bt|$.
\begin{proof}
In this proof we use single bars for geometric realisation of a semi-simplicial set, and double bars for the iterated geometric realisation of a bi-semi-simplicial set.

Let~$\iota$ be the map $|\Sep(\rho)^\delta_\bt| \to |\Sep(\rho)_\bt|$ induced by the identity map.
We follow \cite{GalatiusRandalWilliams}. Consider the bi-semi-simplicial space~$D_{\bt,\bt}$ given by 
\[
D_{p,q}= \Sep(\rho)_{(p+q+1)}
\]
topologised as a subspace of~$\Sep(\rho)_p \times \Sep(\rho)^\delta_q$, and with face maps inherited from $\Sep(\rho)_\bt$. That is,~$D_{p,q}$ is the subspace of~$\Sep(\rho)_p \times \Sep(\rho)^\delta_q$ such that all~$(p+q+2)$ separating systems can be realised disjointly. Then there are two augmentation maps~$\epsilon$ and~$\delta$:
\[
\epsilon_p: |D_{p,\bt}| \to \Sep(\rho)_p \text{   and   } \delta_q: |D_{\bt,q}| \to \Sep(\rho)^\delta_q
\]
(\emph{i.e.,}~set~$D_{p,-1}=\Sep(\rho)_p$ and~$D_{-1,q}=\Sep(\rho)^\delta_q$). Then by \cite[Lemma 5.8]{GalatiusRandalWilliams} the following diagram commutes up to homotopy. \[
\xymatrix{& {\ar[ld]_{|\epsilon_\bt|}} {\ar@{}[d]^{}} \|D_{\bt,\bt}\|\ar[rd]^{|\delta_\bt|} & \\
|\Sep(\rho)_\bt| & & {\ar[ll]^{|\iota_\bt|}}|\Sep(\rho)^\delta_\bt|
}
\]
Now by \cite[Lemma 2.8]{GalatiusRandalWilliams}, $\epsilon_p$ is a Serre microfibration, since~$\Sep(\rho)_p$ is Hausdorff, and for a fixed separating system $(\S_0,\ldots, \S_p)\in \Sep(\rho)_p$, the subspace $\{x\in\Sep(\rho)_{q} |(\S_0,\ldots, \S_p, x) \in \Sep(\rho)_{p+q+1}\}$ is open in $\Sep(\rho)_{q}$.
Let~$(\S_0,\ldots, \S_p)\in \Sep(\rho)_p$, and $\hat{\Sigma}$ be the image of the separating system~$\S_0\sqcup \ldots \sqcup \S_p \in \R^3$. Then $\epsilon_p$ has fiber~$\Sep(\rho, \hat{\Sigma})^\delta_\bt$, and the realisation of this space is contractible by Lemma~\ref{lem - discrete missing sep is contractible}. It follows that~$|\epsilon_\bt|$ is a weak equivalence. Up to homotopy this map factors through $|\Sep(\rho)^\delta_\bt|$ (since $|\iota_\bt|\circ |\delta_\bt|\simeq |\epsilon_\bt|$) and so we conclude that if  $|\Sep(\rho)^\delta_\bt|$ is~$k$-connected, then so is $|\Sep(\rho)_\bt|$.
\end{proof}
\end{prop}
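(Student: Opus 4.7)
The plan is to interpolate between the two realisations by constructing an auxiliary bi-semi-simplicial space. I would define $D_{p,q}$ to be the subspace of $\Sep(\rho)_p \times \Sep(\rho)^\delta_q$ consisting of ordered $(p+q+2)$-tuples of essential separating systems for $\rho$ whose members can be realised pairwise disjointly, with horizontal face maps coming from $\Sep(\rho)_\bt$ (carrying the Whitney topology) and vertical face maps coming from $\Sep(\rho)^\delta_\bt$ (discrete). Forgetting the discrete or topologised coordinates respectively yields two augmentations $\epsilon_\bt\colon \|D_{\bt,\bt}\|\to |\Sep(\rho)_\bt|$ and $\delta_\bt\colon \|D_{\bt,\bt}\|\to |\Sep(\rho)^\delta_\bt|$, which together with the natural continuous identity $\iota_\bt\colon \Sep(\rho)^\delta_\bt\to \Sep(\rho)_\bt$ fit into a triangle commuting up to homotopy: $|\iota_\bt|\circ |\delta_\bt|\simeq |\epsilon_\bt|$. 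The $k$-connectedness will then transfer through this triangle.

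The main step, and the key obstacle, is to show that $|\epsilon_\bt|$ is a weak equivalence. I would do this by checking level-wise that each $\epsilon_p\colon |D_{p,\bt}|\to \Sep(\rho)_p$ is a Serre microfibration with contractible fibres. Since $\Sep(\rho)_p$ is Hausdorff and $D_{p,q}$ is an open subspace of the product $\Sep(\rho)_p\times \Sep(\rho)^\delta_q$, the microfibration property follows from standard arguments in the spirit of \cite{GalatiusRandalWilliams}. The fibre of $\epsilon_p$ over a fixed $(\Sigma_0,\ldots,\Sigma_p)\in \Sep(\rho)_p$ is precisely $|\Sep(\rho,\hat\Sigma)^\delta_\bt|$ with $\hat\Sigma=\Sigma_0\sqcup\cdots\sqcup\Sigma_p$, and Lemma~\ref{lem - discrete missing sep is contractible} furnishes the needed contractibility. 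A Serre microfibration with weakly contractible fibres is a quasi-fibration, so $|\epsilon_\bt|$ is a weak equivalence.

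To conclude, suppose $|\Sep(\rho)^\delta_\bt|$ is $k$-connected. Given any class $\alpha\in \pi_j(|\Sep(\rho)_\bt|)$ with $j\leq k$, use the weak equivalence $|\epsilon_\bt|$ to lift $\alpha$ to $\beta\in \pi_j(\|D_{\bt,\bt}\|)$; then $\alpha = |\iota_\bt|_*\,|\delta_\bt|_*(\beta)$, and $|\delta_\bt|_*(\beta)\in \pi_j(|\Sep(\rho)^\delta_\bt|)=0$ since the discrete realisation is $k$-connected. Hence $\alpha=0$, showing $|\Sep(\rho)_\bt|$ is $k$-connected. The whole argument rests on the fibre being contractible, so the real work is \emph{inside} Lemma~\ref{lem - discrete missing sep is contractible}; the remaining bi-semi-simplicial bookkeeping is a standard comparison trick and should be straightforward.
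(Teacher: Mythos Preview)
Your proposal is correct and follows essentially the same route as the paper: the same bi-semi-simplicial interpolation $D_{p,q}\subset \Sep(\rho)_p\times\Sep(\rho)^\delta_q$, the same homotopy-commutative triangle, and the same appeal to the Serre microfibration criterion together with Lemma~\ref{lem - discrete missing sep is contractible} to conclude that $|\epsilon_\bt|$ is a weak equivalence. The only difference is cosmetic---you spell out the final $\pi_j$ lifting argument explicitly, whereas the paper leaves it as a one-line observation.
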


\begin{thm}\label{thm-discrete fiber is contractible}
$|\Sep(\rho)^\delta_\bt|$ is contractible.
\end{thm}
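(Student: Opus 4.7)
The plan is to prove weak contractibility of $|\Sep(\rho)^\delta_\bt|$ by a cone construction, with Lemma~\ref{lem - discrete missing sep is contractible} supplying the key geometric input. Given a map $f\colon S^k\to |\Sep(\rho)^\delta_\bt|$, compactness of $S^k$ together with the CW structure on $|\Sep(\rho)^\delta_\bt|$ lets me replace $f$ up to homotopy by a map factoring through a finite sub-semisimplicial set $K\subseteq \Sep(\rho)^\delta_\bt$. Let $\Sigma_1,\dots,\Sigma_N$ denote the essential separating systems occurring as vertices of $K$.

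The heart of the argument is to produce a single essential separating system $\Sigma^*$ for $\rho$ that is, as a subset of $\R^3$, disjoint from every $\Sigma_i$. Once such $\Sigma^*$ is obtained the proof finishes easily: for each simplex $(\Sigma_{i_0},\dots,\Sigma_{i_p})$ of $K$ the prepended tuple $(\Sigma^*,\Sigma_{i_0},\dots,\Sigma_{i_p})$ is a $(p+1)$-simplex of $\Sep(\rho)^\delta_\bt$, since the $\Sigma_{i_j}$ are pairwise disjoint as entries of a simplex of $K$ and $\Sigma^*$ is disjoint from each $\Sigma_{i_j}$ by construction. These augmented simplices assemble into a cone $\Sigma^*\ast K\subseteq |\Sep(\rho)^\delta_\bt|$ with apex $\Sigma^*$, in which $f$ is null-homotopic. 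Running this for all $k\geq 0$ gives weak contractibility, and hence contractibility as a CW complex.

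The principal obstacle, and the place where genuine geometric input is required, is the construction of $\Sigma^*$. Lemma~\ref{lem - discrete missing sep is contractible} produces an essential separating system disjoint from one fixed separating system $\hat\Sigma$, but here one must avoid the union $X=\Sigma_1\cup\cdots\cup\Sigma_N$, which in general is \emph{not} a sphere system: distinct vertices of $K$ need not span a simplex, so the $\Sigma_i$ can intersect one another transversely and $X$ will fail to be an embedded $2$-submanifold. My plan is to prove the required strengthening directly: since each $\Sigma_i$ individually separates the pieces of~$\rho$, every connected component of $\R^3\setminus X$ still contains at most one piece of $\rho$, so $X$ behaves as a \emph{generalised} separating system and the geometric techniques slated for Section~\ref{section - contractible} (sphere surgery plus isotopy extension, in the spirit of the construction in the proof of Lemma~\ref{lem:ComType}) apply with essentially the same argument as for a genuine separating system. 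A cleaner alternative is to induct on $N$, using Lemma~\ref{lem - discrete missing sep is contractible} as the base case and at each step removing intersections between a provisional $\Sigma^*$ and the next $\Sigma_i$ by innermost-disk surgeries, preserving the essential separating property throughout.
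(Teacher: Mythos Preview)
Your strategy---find a single essential separating system $\Sigma^*$ disjoint from every vertex $\Sigma_i$ of $K$ and cone off---is cleaner in outline than the paper's, which instead enlarges to a semi-simplicial space $\TSeprho_\bt$ (same vertices, but higher simplices allow the $\Sigma_i$ to intersect \emph{transversely} rather than be disjoint), shows that space is contractible by a transversality argument, and then proves the inclusion $|\Sep(\rho)^\delta_\bt|\hookrightarrow |\TSeprho_\bt|$ is a weak equivalence via a surgery argument applied to a filling $F\colon D^{k+1}\to |\TSeprho_\bt|$.

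The gap is in your construction of $\Sigma^*$. Your inductive approach does not work as stated: suppose you have $\Sigma^*$ disjoint from $\Sigma_1,\dots,\Sigma_{j-1}$ and you surger $\Sigma^*$ along an innermost disk $D\subset \Sigma_j$ to reduce $|\Sigma^*\cap\Sigma_j|$. Since non-adjacent vertices of $K$ need not be disjoint, $\Sigma_j$ may intersect some $\Sigma_i$ with $i<j$, and $D$ may meet $\Sigma_i$ in its interior; the surgered $\Sigma^*$, which carries a parallel copy of $D$, then acquires new intersections with $\Sigma_i$. There is no reason for this process to terminate. The paper sidesteps this by always surgering along a disk of globally minimal area among \emph{all} intersection disks appearing in the image of the filling $F$---minimality over everything is what guarantees no new intersections are created---and, crucially, it is free to modify any vertex in the filling, not just a single cone-point candidate. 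Your ``generalised separating system'' alternative has the same problem: the proof of Lemma~\ref{lem - discrete missing sep is contractible} works by taking a parallel push-off of $\hat\Sigma$ inside a tubular neighbourhood, which requires $\hat\Sigma$ to be an embedded submanifold, and the union $X=\bigcup_i\Sigma_i$ is not. A correct construction of $\Sigma^*$ may well exist, but writing it down carefully seems to require carrying out something equivalent to the paper's minimal-area-disk surgery, so the apparent simplification is illusory.
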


The proof of this theorem is the main focus of Section~\ref{section - contractible}. Assuming this result, we can now prove Theorem~\ref{abcthm - sep equiv}.

\begin{rem}\label{rem - ht cw complexes}
The spaces $\link(L)$, $\plink(L)$, $\Sep_\bt$, $\pSep_\bt$, and $\Sep(\rho)_\bt$ all have the homotopy type of CW-complexes by \cite{Palais, Milnor, Kuratowski}.  Kuratowski~\cite{Kuratowski} showed if $Y$ is an ANR and $X$ is compact then the space of continuous maps from $X$ to $Y$ is an ANR. Milnor~\cite{Milnor} showed that ANR spaces are exactly those with the homotopy type of CW-complexes. Finally, Palais~\cite{Palais} collected results on infinite dimensional manifolds, which included the space of smooth maps between manifolds. In particular they are ANRs, as are embedding spaces (since open subsets of ANRs are ANRs).
\end{rem}

\begin{restate}{Theorem}{abcthm - sep equiv}
The map $|\varepsilon_\bt|\colon|\Sep_\bt|{\rightarrow} \link(L)$
is a homotopy equivalence. 
\end{restate}

\begin{proof}
We first show the map is a weak equivalence  \emph{i.e.,}~for all~$k$ it induces isomorphisms
\[\pi_k(|\Sep_\bt|)\cong \pi_k(\link(L)).\] 
By Lemma~\ref{lem - homotopy fiber is amalg of levelwise fibers}, $hofib_\rho|\varepsilon_\bt|$ is weakly equivalent to $|\Sep(\rho)_\bt|$, 
and combining Proposition~\ref{prop - moving from discrete topology to whitney} and Theorem~\ref{thm-discrete fiber is contractible}, we conclude that the latter is contractible. It follows that~$hofib_\rho|\varepsilon_\bt|$ is weakly equivalent to a point and thus~$|\varepsilon_\bt|$ is a weak equivalence. By Remark~\ref{rem - ht cw complexes} the spaces both have the homotopy type of CW complexes and so~$|\varepsilon_\bt |$ is a homotopy equivalence.
\end{proof}

Recall from Definition~\ref{defn - sep and psep} that $\pSep_\bt$ is the semi-simplicial space constructed analogously to $\Sep_\bt$, but in which all link pieces are labelled. Then $\pSep_\bt$ has an augmentation to $\plink(L)$ that forgets the separating systems, which we also denote by $\varepsilon_\bt\colon \pSep_\bt\rightarrow \plink(L)$.  Consequently, there is a levelwise covering map $|\pSep_\bt|\rightarrow |\Sep_\bt|$ and a commutative pullback square
\begin{equation}\label{eq - pullback square}
\xymatrix{|\pSep_\bt|\ar[r]\ar[d]^{|\varepsilon_\bt|}&|\Sep_\bt|\ar[d]^{|\varepsilon_\bt|}\\
\plink(L)\ar[r]&\link(L)
}
\end{equation}
where the horizontal arrows are the covering maps induced by forgetting the labels on pieces. By Theorem \ref{abcthm - sep equiv} the right hand map is a homotopy equivalence.

\begin{cor}\label{cor - PSep Homotopy equivalence}
The map $|\varepsilon_\bt|\colon|\pSep_\bt|\rightarrow \plink(L)$ is a homotopy equivalence.
\end{cor}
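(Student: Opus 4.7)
My plan is to deduce this corollary from Theorem~\ref{abcthm - sep equiv} by exploiting the pullback square (\ref{eq - pullback square}) together with the fact that its horizontal maps are finite covering maps.

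First I would verify that (\ref{eq - pullback square}) is indeed a pullback square of topological spaces, not merely a commutative square. The map $\plink(L) \to \link(L)$ is a finite covering map because the fiber over $\rho \in \link(L)$ is the (finite) set of labelings of its pieces by $\{1,\ldots,n\}$, subject to the constraint that the label of $\rho_i$ is compatible with its isotopy class. Level-wise, $\pSep_p \to \Sep_p$ is the same finite covering, with an added disjoint $(p+1)$-tuple of essential separating systems carried along, and these assemble to a covering map of geometric realisations $|\pSep_\bt| \to |\Sep_\bt|$. The natural comparison map $|\pSep_\bt| \to |\Sep_\bt|\times_{\link(L)} \plink(L)$ is a continuous bijection between covering spaces of $|\Sep_\bt|$, hence a homeomorphism.

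Next, since $\plink(L) \to \link(L)$ is a covering map, it is in particular a Serre fibration. Pullback along a Serre fibration preserves weak equivalences in $\mathsf{Top}$ (right-properness). The right vertical map in (\ref{eq - pullback square}) is a weak equivalence by Theorem~\ref{abcthm - sep equiv}, and the left vertical map is its pullback along $\plink(L) \to \link(L)$, so it is a weak equivalence as well. Finally, Remark~\ref{rem - ht cw complexes} ensures that both $|\pSep_\bt|$ and $\plink(L)$ have the homotopy type of CW-complexes, so Whitehead's theorem upgrades the weak equivalence to a homotopy equivalence.

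The main potential obstacle is the verification that the square is a pullback at the level of geometric realisations, since in general geometric realisation does not commute with pullbacks. However, since the horizontal maps are covering maps (level-wise and hence on realisations), the fiber product in $\mathsf{Top}$ is simply another covering space classified by the same subgroup of $\pi_1(|\Sep_\bt|)\cong \pi_1(\link(L))$, and the identification with $|\pSep_\bt|$ is immediate from the definitions. As an alternative route, one could instead rerun the proof of Theorem~\ref{abcthm - sep equiv} verbatim: $\varepsilon_\bt\colon \pSep_\bt \to \plink(L)$ is level-wise a Serre fibration by isotopy extension, and its fiber over $\rho\in\plink(L)$ agrees as a semi-simplicial space with $\Sep(\bar\rho)_\bt$ for the underlying unlabeled link $\bar\rho$, whose realisation is contractible by Proposition~\ref{prop - moving from discrete topology to whitney} and Theorem~\ref{thm-discrete fiber is contractible}.
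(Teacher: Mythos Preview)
Your argument is correct and follows essentially the same route as the paper: both use the pullback square (\ref{eq - pullback square}) together with Theorem~\ref{abcthm - sep equiv} and Remark~\ref{rem - ht cw complexes} to conclude. The paper phrases the key step as ``the homotopy fibers of the right-hand map are contractible, hence so are those of the left-hand map'', which is exactly the right-properness statement you invoke; you supply more justification for why the square is a genuine pullback (which the paper asserts without further comment), and your alternative route of rerunning the proof of Theorem~\ref{abcthm - sep equiv} directly is also perfectly valid.
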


\begin{proof}
Since (\ref{eq - pullback square}) is a pullback square, and the homotopy fibers of the right hand map are contractible it follows that the homotopy fibers of the lefts hand map are contractible and so it is a weak equivalence. By Remark~\ref{rem - ht cw complexes} both spaces have the homotopy type of CW complexes, and so the statement of the corollary follows.  
\end{proof}

\section{The fundamental group of~\texorpdfstring{$\link(L)$}{the link embedding space}}\label{section - the fundamental group}

In this section we use the previous results to give a general presentation for the fundamental group of~$\link(L)$, where $L$ is the image in~$\link(L)$ of the disjoint union $L_1\sqcup \cdots \sqcup L_n$. Recall that the $L_i$ satisfy the following criteria:
\begin{enumerate}[(i)]
    \item Each piece $L_i$ is contained in the ball $B_i$ of radius $\frac{1}{2}$ centered on $(2i,0,0)\in \R^3$.
    \item If $L_i\cong L_j$ for $i<j$ then $L_j=L_i+(2(j-i),0,0)$. That is, if two pieces are isotopic, then the unparametrised embeddings differ by a translation of $\R^3$.  
\end{enumerate}

The embedding $L$ will always serve as our chosen basepoint for $\pi_1$, hence in what follows we will write $\pi_1(\link(L))$ in place of $\pi_1(\link(L),L)$, and similarly for~$\plink(L)$. As a first step we identify the possible permutations of the pieces of $L$, and pass to studying~$\pi_1(\plink(L))$. The presentation of $\pi_1(\plink(L))$ will then depend on the following two families of groups:

\begin{itemize}
    \item $G_i=\pi_1(\link(L_i))$, and
    \item $H_i=\pi_1(\R^3\setminus L_i)$, for~$L_i$ a piece of~$L$.
\end{itemize}

We will describe $\pi_1(\plink(L))$ in terms of the $G_i$ and certain automorphisms of the fundamental group of the complement  $\pi_1(\R^3\setminus L)=H_L\cong H_1*\cdots*H_n$. Our strategy will be to identify certain subgroups of the motion group, and then show that any element of the motion group is a product of elements of these.

\subsection{Permutations of the pieces} The first subgroup we consider will be permutations of the link pieces. Let $S_{\{L_1,\ldots,L_n\}}$ denote the symmetric group on the set of link pieces $\{L_1,\ldots, L_n\}$ and  define $P_L$ to be the group of permutations  satisfying $L_i\cong L_{\sigma(i)}$ for all $\sigma\in P_L$.  That is, we only allow permutations in $P_L$ when link pieces are isotopic.  

\begin{lem}\label{lem:Permute Links}There is a surjective homomorphism
$\eta\colon\pi_1(\link(L))\rightarrow P_L$.
\end{lem}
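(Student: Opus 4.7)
The plan is to construct $\eta$ as the monodromy homomorphism of the covering $\plink(L)\to\link(L)$ whose existence is recorded at the end of Section~\ref{section - semi simplicial}.

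First I will verify that this cover is regular with deck group $P_L$. The quotient $\emb(\sqcup_m S^1,\R^3)/\prod_i \Diff(\sqcup_{m_i} S^1)$ differs from $\emb(\sqcup_m S^1,\R^3)/\Diff(\sqcup_m S^1)$ only by the subgroup of $\Diff(\sqcup_m S^1)$ that permutes circle components across different pieces. Restricted to the connected component of $L$, such permutations act on $\plink(L)$ by relabellings of the pieces $\{L_1,\ldots,L_n\}$, and those relabellings fixing the underlying unlabelled embedding $L$ are precisely the $\sigma$ with $L_{\sigma(i)}\cong L_i$, that is, the elements of $P_L$.

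With this in hand, a loop $\gamma\colon (S^1,*)\to (\link(L),L)$ admits a unique lift $\widetilde{\gamma}$ to $\plink(L)$ starting at the labelled basepoint, and $\widetilde{\gamma}(1)=\sigma\cdot L$ for a unique $\sigma\in P_L$. Setting $\eta([\gamma])=\sigma$ gives a well-defined group homomorphism by standard covering-space theory; equivalently, $\eta$ is the quotient map in the short exact sequence
\[1\longrightarrow \pi_1(\plink(L))\longrightarrow \pi_1(\link(L))\longrightarrow P_L\longrightarrow 1.\]

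For surjectivity it suffices to realise each transposition $(i\,j)\in P_L$ with $L_i\cong L_j$, since such transpositions generate $P_L\cong \prod_{[L_i]}S_{|[L_i]|}$. By condition~(ii) of the normalisation we have $L_j=L_i+(2(j-i),0,0)$, so I will construct the required loop by taking small enlargements $B_i',B_j'$ of $B_i,B_j$ disjoint from the other pieces, rigidly rotating this pair by $180^\circ$ about the perpendicular bisector of the segment joining their centres, and concatenating if necessary with a short isotopy returning the rotated copies of $L_i,L_j$ to the standard embeddings inside $B_j,B_i$. The lift of this loop to $\plink(L)$ interchanges the labels $i$ and $j$, so $\eta$ sends its class to $(i\,j)$. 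The hard part will be the first step---pinning down that $\plink(L)\to\link(L)$ is truly regular with deck group $P_L$---since the definition of $\eta$ rests on this; once it is in hand the rest follows from standard covering-space theory together with the rotation motion above.
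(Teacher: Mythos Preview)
Your monodromy definition of $\eta$ coincides with the paper's: a loop based at $L$ permutes the pieces, and $\eta$ records that permutation. The paper just phrases this directly rather than through covering-space language.

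One point of logic: your claim that the deck group of $\plink(L)\to\link(L)$ is all of $P_L$ (and not a proper subgroup) already requires the explicit loop. Knowing that each $\sigma\in P_L$ preserves the component $\plink(L)$ amounts to exhibiting a path from the standard labelling to the $\sigma$-relabelling, which is exactly the motion you construct at the end. So the short exact sequence you display is a \emph{consequence} of the surjectivity argument rather than a route to it; the ingredients are all present, but the order of dependence is reversed from what your last sentence suggests. (Also, your phrase ``relabellings fixing the underlying unlabelled embedding $L$'' is not the right filter---every relabelling does that; the relevant condition is preserving the component.)

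For the explicit swap, the paper uses translations rather than a rotation: send $B_i$ up to height~$2$, across by $2(j-i)$ in the $x$-direction, then down, while $B_j$ does the mirror motion below height~$-2$. This sidesteps two small annoyances in your version---having to name a rotation axis (one rotates about a line in $\R^3$, not the bisecting plane) and appending a correcting isotopy at the end (a pure translation lands $L_i$ exactly on $L_j$ by condition~(ii)).
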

\begin{proof}  An element of $\pi_1(\link(L))$ is represented by a path of unparametrised embeddings $\gamma=\gamma_t\colon[0,1] \rightarrow \link(L)$ such that $\im(\gamma_0)=\im(\gamma_1)=L$.  Thus, $\gamma$ induces a permutation of the pieces of $L$, and we define $\eta(\gamma)\in S_{\{L_1,\ldots,L_n\}}$ to be this permutation.  Since $\gamma$ can only exchange $L_i$ and $L_j$ when $L_i\cong L_j$, the image of $\eta$ lies in $P_L$. To see that $\eta$ is surjective, we use criterion (ii) in the definition of $L$ to produce lifts of elements in~$P_L$, \emph{i.e.}~a sequence of translations of the $B_i$ which exchange $L_i$ with $L_j$ when $L_i\cong L_j$. Explicitly, for $L_i\cong L_j,~i<j$: translate $B_i$ up to $B_i+(0,0,2)$ then over to $B_i+(2(j-i),0,2)$ then down to $B_i+(2(j-i),0,0)$, while simultaneously translating $B_j$ down to $B_j+(0,0,-2)$ then to $B_j+(2(i-j),0,-2)$ then up to $B_j+(2(i-j),0,0)$.  
\end{proof}

It follows from Lemma \ref{lem:Permute Links} that after passing to a finite cover of $\link(L)$ of degree $|P_L|$, we may assume that any loop of embeddings returns each piece of $L$ back to itself. Otherwise stated, we can regard this cover as embeddings in $\link(L)$ where the pieces are labelled. This is precisely the space $\plink(L)$ with labelled basepoint~$L$. In order to simplify exposition, we will now assume that the pieces of $L$ are labelled, and revisit the unlabelled case at the end.

\subsection{Motions of individual pieces} The second subgroup we will identify consists of motions of the individual pieces $L_i$. Let $\uemb(L_i,B_i)$ be the path component of the space of unparametrised embeddings of $\sqcup_{m_i} S^1$ into the ball $B_i$ containing $L_i$. By identifying the interior of $B_i$ with $\R^3$, we obtain a homeomorphism $\link(L_i)\cong \uemb(L_i,B_i)$. Thus, the inclusion $B_i\rightarrow \R^3$ induces a weak equivalence $\uemb(L_i,B_i)\rightarrow \link(L_i)$, where the inverse map is given by shrinking $\R^3$ into $B_i$. Given any element of $ \uemb(L_i,B_i)$ we obtain an element of $\plink(L)$ by mapping to the basepoint $L_j$ on the pieces $L_j\neq L_i$. This defines an embedding $\beta_i\colon\link(L_i)\hookrightarrow \plink(L)$.  

\begin{lem}\label{lem:Snow Globe}Let $(\beta_i)_*$ be the induced map on $\pi_1$.  Then $(\beta_i)_*$ is injective.
\begin{proof}
As discussed above, since $B_i\cong \R^3$, the inclusion $B_i\hookrightarrow \R^3$ induces an isomorphism from $\pi_1(\uemb(L_i,B_i))\cong \pi_1(\link(L_i))$. Now we have a composition \[\uemb(L_i,B_i)\hookrightarrow \plink(L)\rightarrow \link(L_i)\]
where the first map is $\beta_i$ and the second map forgets all pieces except $L_i$. Since the composition is the inclusion from above, $(\beta_i)_*$ must be injective. 
\end{proof}
\end{lem}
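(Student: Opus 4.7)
The plan is to exhibit $(\beta_i)_*$ as a retract at the level of $\pi_1$, by factoring a known injection (in fact, an isomorphism) through the map $\beta_i$.

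First I would observe that $B_i$ is an open ball and hence homeomorphic to $\R^3$, so the inclusion $B_i \hookrightarrow \R^3$ induces a homotopy equivalence $\iota_i \colon \uemb(L_i, B_i) \xrightarrow{\simeq} \link(L_i)$. This was already remarked in the paragraph preceding the lemma statement. In particular, $(\iota_i)_*$ is an isomorphism on $\pi_1$.

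Next I would construct a ``forgetful'' map $r_i \colon \plink(L) \to \link(L_i)$ which, on the level of a tuple of unparametrised labelled pieces $(\rho_1, \ldots, \rho_n)$, simply returns $\rho_i$. This is well-defined and continuous precisely because the pieces in $\plink(L)$ are \emph{labelled}: there is a natural coordinate projection of the quotient of $\emb(\sqcup_m S^1, \R^3)$ by $\prod_i \Diff(\sqcup_{m_i} S^1)$ onto the component of $L_i$ in the quotient $\emb(\sqcup_{m_i} S^1, \R^3)/\Diff(\sqcup_{m_i} S^1)$, which is $\link(L_i)$.

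The key computation is then that the composite
\[
\uemb(L_i, B_i) \xrightarrow{\beta_i} \plink(L) \xrightarrow{r_i} \link(L_i)
\]
equals $\iota_i$. This is immediate from the definitions: $\beta_i$ takes $\rho \in \uemb(L_i, B_i)$ to the tuple in which the $i$-th piece is $\rho \subset B_i \subset \R^3$ and every other piece is fixed at its chosen basepoint $L_j \subset B_j$, and then $r_i$ returns the $i$-th coordinate $\rho$ now regarded as an element of $\link(L_i)$. So $r_i \circ \beta_i = \iota_i$.

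Passing to $\pi_1$ and using that $(\iota_i)_*$ is an isomorphism, the equality $(r_i)_* \circ (\beta_i)_* = (\iota_i)_*$ forces $(\beta_i)_*$ to be injective. The only step requiring any care is the construction of $r_i$ and the verification that it is continuous in the Whitney topology on the quotient embedding spaces, but this is routine since it is induced by the evident projection at the level of parametrised embeddings.
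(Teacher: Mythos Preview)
Your argument is correct and is essentially identical to the paper's own proof: both factor the inclusion-induced isomorphism $\pi_1(\uemb(L_i,B_i))\cong\pi_1(\link(L_i))$ through $(\beta_i)_*$ via the forgetful map $\plink(L)\to\link(L_i)$, and conclude injectivity. The only cosmetic difference is that you spell out the construction and continuity of the forgetful map $r_i$ in slightly more detail than the paper does.
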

In fact, putting the~$\beta_i$ maps together gives an embedding \[\beta\colon\prod_{i=1}^n\uemb(L_i,B_i)\hookrightarrow\plink(L).\]
The image of $\beta$ is the subspace of $\plink(L)$ where $L_i$ lies in $B_i$. Let $G_L=\prod_{i=1}^n G_i$ be the product of the $G_i$. By Lemma \ref{lem:Snow Globe}, after identifying $\pi_1(\uemb(L_i,B_i))$ with $G_i$, $\beta$ induces an injection \[\beta_*\colon G_L\hookrightarrow \pi_1(\plink(L)).\] 
Taking the product of the forgetful maps $\plink(L)\rightarrow \link(L_i)$, we obtain a homomorphism 
\begin{equation}\label{eq map r}
r\colon \pi_1(\plink(L))\rightarrow \prod_{i=1}^n\pi_1(\link(L_i))=G_L,
\end{equation}
which exhibits $G_L$ as a retract.

\subsection{The Dahm homomorphism and the Fouxe-Rabinovitch subgroup} \label{sec:Dahm-Fouxe-Rabinovitch}
To describe the last subgroup of motions, we recall the homomorphism \[D\colon \pi_1(\plink(L))\rightarrow\Aut(\pi_1(\R^3\setminus L))\cong \Aut(H_1*\cdots*H_k)=\Aut(H_L)\] defined as follows. Consider a family of embeddings $\gamma=\gamma_t$, $t\in[0,1]$, such that $\gamma_0=\gamma_1=L$. Extend this to $\R^3$ via an ambient isotopy $f_t$ with compact support.  Choose now a basepoint $p\in \R^3$ that is fixed by $f_t$ for all $t$.  Thus $(f_1)_*$ defines an automorphism of $\pi_1(\R^3\setminus L,p)$ and we set $D(\gamma)=(f_1)_*$. Although this definition depends on the choice of $p$, one can make it well-defined by choosing a sequence of basepoints along a ray out to infinity.  Since the isotopies are compactly supported, points sufficiently far along the ray will be fixed. There is a canonical way to identify automorphisms based at points along this ray, and in fact along any ray out to infinity.  See Section 4 of \cite{Goldsmith1} for details.

This homomorphism was first defined in the topological setting by Dahm \cite{Dahm}, and then later explored by Goldsmith \cite{Goldsmith1,Goldsmith}, who computed its image when $L$ is an $(np,nq)$ - torus link. The Dahm homomorphism was introduced in the smooth setting by Wattenberg \cite[3.1-2]{Wattenberg1972}.

\begin{lem}\label{lem:Conjugation}
Any automorphism in the image of $D$ sends $H_i$ to a conjugate of itself.  
\end{lem}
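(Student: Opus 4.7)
Let $\gamma$ represent a class in $\pi_1(\plink(L), L)$, and let $f_t$ be a compactly supported ambient isotopy of $\R^3$ with $f_0=\id$ extending $\gamma$. Since the pieces of $L$ are labelled in $\plink(L)$, we have $f_1(L_i)=L_i$ as sets, and the basepoint $p$ (chosen outside the support of $f_t$) satisfies $f_1(p)=p$. By definition $D(\gamma)=(f_1)_*\in\Aut(H_L)$. I will show that $(f_1)_*(H_i)=gH_ig^{-1}$ for some $g\in H_L$ in two steps.

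First, I would normalize $f_1$ so that it preserves the basepoint separating system $\Sigma_L$ of Example~\ref{example - basepoint SS}, consisting of the spheres $\partial B_i$. The image $f_1(\Sigma_L)$ is an essential separating system for $L$ of the same combinatorial type as $\Sigma_L$ (a star with $n$ labelled leaves), since $f_1(\partial B_i)$ bounds the ball $f_1(B_i)$, which contains $L_i$ and no other piece. By Lemma~\ref{lem:ComType}, after first composing $f_1$ with an ambient isotopy of $\R^3$ rel $L$ that makes $f_1(\Sigma_L)$ and $\Sigma_L$ disjoint (via innermost-disk surgery on the intersection circles, supported away from $L$), a further isotopy rel $L$ can be used to arrange $f_1(\Sigma_L)=\Sigma_L$. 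Since these extra isotopies fix $L$, they do not change the automorphism $(f_1)_*\in\Aut(H_L)$. After this normalization, $f_1(B_i)=B_i$ for each $i$.

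Second, I would compute the action on $H_i$ by a basepoint chase. Choose $q_i\in\partial B_i$ and a path $\rho_i$ from $p$ to $q_i$ in $\R^3\setminus L$, so that
\[ H_i = \bigl\{[\rho_i\cdot\sigma\cdot\rho_i^{-1}]\in H_L \;:\; \sigma \text{ a loop in }B_i\setminus L_i \text{ based at } q_i\bigr\}. \]
Because $f_1(B_i)=B_i$ and $f_1(L_i)=L_i$, the loop $f_1(\sigma)$ lies in $B_i\setminus L_i$ and is based at $f_1(q_i)\in\partial B_i$. Pick any path $\beta$ inside $B_i\setminus L_i$ from $q_i$ to $f_1(q_i)$ (possible since $B_i\setminus L_i$ is path-connected), and set $g=[f_1(\rho_i)\cdot\beta\cdot\rho_i^{-1}]\in H_L$. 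A direct homotopy computation yields
\[ (f_1)_*[\rho_i\cdot\sigma\cdot\rho_i^{-1}] = g\cdot [\rho_i\cdot(\beta^{-1}\cdot f_1(\sigma)\cdot\beta)\cdot\rho_i^{-1}]\cdot g^{-1}, \]
and because $\beta$ lies in $B_i\setminus L_i$, the loop $\beta^{-1}\cdot f_1(\sigma)\cdot\beta$ is a loop in $B_i\setminus L_i$ based at $q_i$, so the bracketed element lies in $H_i$. Hence $(f_1)_*(H_i)\subseteq gH_ig^{-1}$, and the same argument applied to $f_1^{-1}$ (with the role of $g$ inverted) gives equality.

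The main obstacle is the normalization step. Lemma~\ref{lem:ComType} as stated gives isotopy rel $L$ only for disjoint separating systems of the same combinatorial type, so to apply it one must first reduce intersections between $f_1(\Sigma_L)$ and $\Sigma_L$ by an isotopy rel $L$; this uses standard innermost-disk surgery carried out in the complement of $L$, essentially the same move used in the proof of Theorem~\ref{abcthm - sep equiv}. Once $f_1(\Sigma_L)=\Sigma_L$ is arranged, the rest is a routine basepoint chase, whose key point is that $\beta$ can be taken \emph{inside} $B_i\setminus L_i$, ensuring the correction loop stays in the $i$-th free factor rather than escaping into another factor of $H_L$.
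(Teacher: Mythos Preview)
Your approach is genuinely different from the paper's. The paper argues algebraically: writing $H_L=H_1*\cdots*H_r*\Z*\cdots*\Z$ with the $H_i$ coming from the non-unknot pieces, Grushko's uniqueness theorem forces any automorphism to send each freely indecomposable $H_i$ to a conjugate of some $H_{\sigma(i)}$; label-preservation (via meridians) pins down $\sigma=\id$. For the unknot pieces the meridian argument alone suffices, since each $\Z$ factor is generated by a meridian whose free homotopy class is preserved. No 3--manifold topology beyond the definition of $D$ is used.

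Your geometric route (normalize so that $f_1(B_i)=B_i$, then do a basepoint chase) is valid in principle, but the normalization step has a real gap. You need that $f_1(\Sigma_L)$ and $\Sigma_L$ are isotopic \emph{rel} $L$, and Lemma~\ref{lem:ComType} only gives this once they are disjoint. Your proposed reduction to the disjoint case---``innermost-disk surgery \ldots\ essentially the same move used in the proof of Theorem~\ref{abcthm - sep equiv}''---does not work as stated: the surgery in that proof replaces a sphere $S_q$ by one of two surgered spheres $S_q^\pm$, producing a \emph{different} separating system $\Sigma_q'$ which is merely disjoint from $\Sigma_q$, not isotopic to it rel $\rho$. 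An innermost-disk \emph{isotopy} argument runs into trouble here because the ball across which you want to push a disk of $\partial B'_j$ may contain other spheres of $\Sigma'$ or other intersection circles with $\Sigma$, so intersections need not decrease. What you actually need is Laudenbach's theorem (homotopic embedded $2$--spheres in an orientable $3$--manifold are isotopic): in $S^3\setminus\nu(L)=M_1\#\cdots\#M_n$ with each $M_i=S^3\setminus\nu(L_i)$ irreducible, both $\partial B_i$ and $f_1(\partial B_i)$ are connect-sum spheres separating off the $M_i$ summand, hence freely homotopic, hence isotopic rel $L$. With that input your argument goes through (modulo a harmless orientation slip in the formula for $g$: with $\beta$ from $q_i$ to $f_1(q_i)$ one should take $g=[f_1(\rho_i)\cdot\beta^{-1}\cdot\rho_i^{-1}]$ and the inner loop is $\beta\cdot f_1(\sigma)\cdot\beta^{-1}$).

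In short: the paper's proof is shorter and purely group-theoretic; yours is more hands-on but, as written, leans on a nontrivial $3$--manifold fact you have not justified.
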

\begin{proof} Recall~$H_i=\pi_1(\R^3\setminus L_i)$. Write \[H_L=\pi_1(\R^3\setminus L)=H_1*\cdots*H_r*\overbrace{\Z*\cdots*\Z}^{n-r}\]
where $H_1,\ldots, H_r$ correspond to pieces that are not unknots, and the ($n-r$) $\Z$-factors correspond to unknot pieces. The uniqueness statement of the Grushko decomposition theorem (see \emph{e.g.}~\cite{Stallings}) states that in any maximal free product decomposition of $H_L$, the factors $\{H_1,\ldots, H_r\}$ are determined up to permutation and replacement by conjugates. Hence the fact that $\pi_1(\plink(L))$ preserves labels implies that  any automorphism in the image of $D$ preserves the conjugacy class of each $H_i$, $1\leq i\leq r$.  On the other hand, each of the $\Z$ factors in~$H_L$ can be represented by a loop that goes through the corresponding unknot piece.  The fact that $\pi_1(\plink(L))$ preserves labels means that the free homotopy class of this loop is preserved.  Hence each $\Z$-factor is also taken to a conjugate of itself.
\end{proof}

Certain automorphisms, which we now describe, lie naturally in the image of~$D$.

\begin{defn}Choose an element $g\in H_i$ and a factor $H_j$, where possibly $j=i$.  Define the \emph{partial conjugation of $H_j$ by $g$} to be the automorphism $X(g,H_j)\in \Aut(H_L)$ which sends $h\mapsto ghg^{-1}$ for every $h\in H_j$, and acts as the identity on all other factors $H_k$, $k\neq j$. We refer to $g$ as the \emph{acting element} of $X(g,H_j)$ and to $H_j$ as the \emph{support}. The \emph{Fouxe-Rabinovitch group} $\FR(H_L)$ is the subgroup of $\Aut(H_L)$ generated by the partial conjugations $X(g,H_j)$ where $g\in H_i$ and $i\neq j$.
\end{defn}

We now describe loops in $\plink(L)$ that represent each $X(g,H_j)$, via the Dahm homomorphism~$D$.
\begin{enumerate}
\item When $i\neq j$, $X(g, H_j)$ can be represented geometrically by a family of embeddings $\gamma_t$ that shrink the piece $L_j$ and move it along a path  in the complement of $L_i$ which represents $g\in H_i=\pi_1(\R^3\setminus L_i)$. It is clear that we may choose $\gamma_t$ in such a way that:
\begin{itemize}
    \item If $k\neq j$,   $\gamma_t$ fixes $L_k$ for all $t$.
    \item For every~$t$, $\gamma_t(L_j)$ is a rescaling of $L_j$ followed by a sequence of translations.
\end{itemize}

\item When $i=j$, we can represent $X(g,H_i)$ as follows.  Choose a basepoint $p\in \R^3$ and represent $g\in H_i=\pi_1(\R^3\setminus L_i)$ as a knot $\ell$ in $\R^3$. We can thicken $\ell$ to a solid torus~$\nu(\ell)$ that links $L_i$.  Now consider a family of embeddings $\gamma_t$ which send each point of $L_i$ around the solid torus and back to itself, \emph{i.e.,} such that every point of $L_i$ moves along a loop in $\R^3\setminus \nu(\ell)$ homotopic to the longitude $\ell\subset \nu(\ell)$. Since the basepoint $p$ is fixed in this isotopy, this has the effect of conjugating each element of $H_i=\pi_1(\R^3\setminus L_i)$ by $g=[\ell]$. We can choose~$\nu(\ell)$ such that the whole motion takes place inside~$B_i$, and thus lies in~$G_i$.
\end{enumerate}
Let $\chi(g, L_j)$ in $\pi_1(\plink(L))$ denote the homotopy class of $\gamma_t$ as described in either $(1)$ or $(2)$.  More generally, given $g\in H_i$ and  any subset $A\subseteq \{L_1,\ldots, L_n\}\setminus \{L_i\}$, let $\chi(g,A)$ denote the product $\prod_{L_j\in A} \chi(g,L_j)$. For $j_1\neq j_2$ and not equal to $i$, $\chi(g,L_{j_1})$ and $\chi(g,L_{j_2})$ can be chosen to have disjoint support and thus commute. Thus $\chi(g,A)$ depends only on $A$.

\begin{defn} Define the \emph{Fouxe-Rabinovitch group of $L$} to be the subgroup\[\FRL=\langle \chi(g,L_j)\mid g\in H_i,~ i\neq j\rangle.\]
\end{defn}

In other words, $\FRL$ is the subgroup generated by partial conjugations whose acting elements are disjoint from their support. By definition, the image of $\FRL$ under $D$ is $\FR(H_L)$. Moreover, we have:

\begin{lem}\label{lem - FR injects} The restriction of  $D$ to $\FRL$ is an isomorphism. 
\end{lem}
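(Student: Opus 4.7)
First I would establish surjectivity of $D|_{\FRL}$. This is essentially by construction: each generator $\chi(g, L_j) \in \FRL$ was defined so that its Dahm image is exactly the partial conjugation $X(g, H_j) \in \FR(H)$, and these partial conjugations generate $\FR(H)$. Combined with Lemma~\ref{lem:Conjugation}, which says the image of $D$ preserves each factor $H_i$ up to conjugacy, this yields $D(\FRL) = \FR(H)$.

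The substantive content is injectivity: a word $w$ in the generators $\chi(g, L_j)$ whose Dahm image is trivial in $\Aut(H_L)$ must be shown to be null-homotopic in $\plink(L)$. My plan is to fix a presentation of $\FR(H)$ on the generators $\{X(g, H_j)\}$ --- for the Fouxe-Rabinovitch / symmetric automorphism group of a free product such presentations are classical, going back to McCool and McCullough--Miller --- and verify that every defining relator lifts to a null-homotopic loop in $\pi_1(\plink(L))$. Once this is checked, the assignment $X(g, H_j)\mapsto \chi(g, L_j)$ extends to a well-defined homomorphism $\FR(H)\to \FRL$ which is a two-sided inverse of $D|_{\FRL}$ on generators, giving the isomorphism.

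Each relator would be verified by an explicit geometric homotopy of motions. The multiplicativity relations $\chi(g_1, L_j)\chi(g_2, L_j) = \chi(g_1 g_2, L_j)$ for $g_1, g_2\in H_i$ follow by concatenating the corresponding loops in $\R^3\setminus L_i$; the trivial relator $\chi(1, L_j)=1$ reduces to contracting a null-homotopic loop in $\R^3\setminus L_i$ to a point. Commutation relations between partial conjugations with disjoint support and disjoint acting element factors are realised by arranging the two motions to occur in disjoint compact regions of $\R^3$, so that they commute in $\pi_1$ of the embedding space. The hardest part of the argument will be the Whitehead-type relations which encode how conjugating one partial conjugation by another transforms its acting element: realising these geometrically requires dragging the representative loop of the acting element through the auxiliary motion via ambient isotopy extension, while keeping all the other pieces and their motions coherent. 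Once all relators in the chosen presentation have been shown to lift to null-homotopic loops, the candidate inverse $\FR(H)\to \FRL$ is a genuine homomorphism and the proof is complete.
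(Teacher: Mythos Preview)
Your approach is correct and matches the paper's: both verify that the defining relations in a known presentation of $\FR(H_L)$ hold among the $\chi(g,L_j)$, so that $X(g,H_j)\mapsto\chi(g,L_j)$ defines an inverse homomorphism. The paper cites the Fouxe-Rabinovitch/Collins--Gilbert presentation and handles the one nontrivial relation (your ``Whitehead-type'' relation, in their notation $[X(g,H_j)X(g,H_k),X(g',H_j)]=1$ for $g\in H_i$, $g'\in H_k$, $i,j,k$ distinct) by shrinking $B_j$ inside $B_k$ so that the $g$-motion of $B_k$ carries $B_j$ along, after which the $g'$-motion of $B_j$ inside $B_k$ can occur at any stage.
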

\begin{proof}
By a result of \cite{FouxeRabinovitch2} (see also \cite{CollinsGilbert}, Proposition 3.1 and Remark (ii) on page 164) the  relations in $\FR(H_L)$ have the following three forms:
\begin{enumerate}
    \item For all $j$ and for $g,g'\in H_i$, $X(g,H_j)X(g',H_j)=X(gg',H_j)$. Here $i=j$ is allowed.
    \item Suppose $g\in H_i$ and $g'\in H_{i'}$. Then $[X(g,H_j),X(g',H_{j'})]=1$ if $j\neq j'$ and $\{i,i'\}\cap \{j,j'\}=\emptyset$. Here $i=i'$ is allowed.

    \item $[X(g,H_j)X(g,H_k),X(g',H_j)]=1$ where $g\in H_i$, $g'\in H_k$ and $i,~j,~k$ are all distinct.
\end{enumerate}
 To prove the lemma, we verify that the chosen generators of $\FRL$ also satisfy these relations. For (1), realise $g,g'$ as loops $\gamma$, $\gamma'$ based at the center of $B_j$. Then the relation in (1) is simply the statement that $\gamma*\gamma'$ is homotopic to $\gamma$ followed by $\gamma'$. For (2), since $j\neq j'$ and neither is equal to $i$ nor $i'$, as we noted above the geometric representatives $\chi(g,L_j)$ and $\chi(g',L_{j'})$ can be chosen to have disjoint support, and thus can be realised simultaneously. Finally, for  (3), since $i$, $j$, and $k$ are distinct we shrink $B_j$ and move it via a sequence of translations into $B_k$.  Then moving~$B_k$ along the loop $\gamma$ for $g\in H_i$ as in the definition of $\chi(g,L_k)$ will also move~$B_j$. Now identify $H_k$ with $\pi_1(B_k\setminus L_k)$. Since we have shrunk down $B_j$ into $B_k$ we are free to move~$B_j$ along the loop $\gamma'\in B_k\setminus L_k$ corresponding to $g'\in H_k$ at any point whilst completing the $\gamma$ path with~$B_k$.  This means that $\chi(g,L_j)\chi(g,L_k)$ commutes with $\chi(g',L_j)$, so (3) is satisfied. Hence $\FRL$ satisfies all relations of $\FR(L)$, which proves that $D$ restricted to $\FRL$ is injective. Since it is surjective on generators, this proves the lemma.
\end{proof}

Recall the map~$r$ from Equation (\ref{eq map r}), from which we obtain a split short exact sequence
\begin{equation}\label{eqn:GL semidirect FR} 1\rightarrow \ker(r)\rightarrow \pi_1(\plink(L))\xrightarrow{r}G_L\rightarrow 1
\end{equation}
From the properties of our chosen representative for $\chi(g,L_j)$ with $g\in H_i$ and $j\neq i$, we deduce that $r(\chi(g,L_j))=1$. In particular, $\FRL\leq \ker(r)$ and $\FRL\cap G_L=\ker(r)\cap G_L=\{1\}$. 

In what follows, we will show that $\ker(r)=\FRL$, and therefore obtain a complete description of $\pi_1(\plink(L))$ as a semidirect product. To accomplish this, in the next section we will analyse which motions of $L$ can be achieved in the complement of a separating system, and show that each of these is generated by a $G_L$ together with a subgroup of $\FRL$.

\subsection{The motion group of $L$ with a separating system.}Consider  an arbitrary embedding $\rho\in \plink(L)$ with separating system $\Sigma$. Let $\pSep_0^{\rho\sqcup \Sigma}$ be the component of $\pSep_0$ containing $\rho\sqcup \Sigma$.   Our goal will be to compute $\pi_1(\pSep_0^{\rho\sqcup \Sigma})$. Recall from Definition~\ref{def-combinatorial type} that the separating system $\Sigma$ defines a rooted labelled tree $T=T_\Sigma$ where each vertex is a component of $\R^3\setminus \Sigma$, labelled by~$i\in \{1,\ldots, n\}$ if it contains the piece $\rho_i$, and each edge is a sphere of  $\Sigma$.  The root is the unbounded component of $\R^3\setminus \Sigma$.

The labels on the rooted tree $T$ define a partial ordering on the set $\{1,\ldots, n\}$ based on proximity to the root. We think of the root as being at the top, with all edges oriented away and downward. For a vertex $v\in T$, denote the set of descendants (which does not include $v$) by $\Desc(v)$. If $v$ is labelled by $i$ then we also write $\Desc(i)=\Desc(v)$.   We single out the direct descendants -- or children -- of vertex $v$ as $\Desc_1(v)=\Desc_1(i)$. In terms of spheres, children of an $i$-vertex are separated by a single sphere from the component of $\R^3\setminus \Sigma$ containing $\rho_i$, and by at least one sphere from the unbounded component.  Each vertex $v$ defines a descending branch of $T$ below $v$. Let $L(v)$ be the subset of basepoint pieces~$L_j$ such that~$j\in\{1,\ldots,n\}$ occurs as a label of this branch. Since $\Sigma$ is an essential separating system, $L(v)$ is non-empty.  

\begin{defn}\label{def:Motion group of T}Let $\Sigma$ be an essential separating system for $\rho$ and $T_\Sigma$ its dual tree. We define the \emph{group of partial conjugations supported on $T_\Sigma$} to be 
$$\chi(T_\Sigma)=\langle\chi(g, L(v))\mid v\in \Desc_1(L_i) \text{ and }g\in H_i\text{ for }1\leq i\leq n\rangle$$ 
Define the \emph{motion group of $T_\Sigma$} to be the group $G(T_\Sigma)$ generated by $G_L$ and $\chi(T_\Sigma)$.
\end{defn}

\begin{prop}\label{prop:Bubble Boyz} For any separating system $\Sigma$,
$\pi_1(\pSep_0^{\rho\cup \Sigma})\cong G(T_\Sigma)$. Moreover,   $\chi(T_\Sigma)\leq G(T_\Sigma)$ is normalised by $G_L$.
\end{prop}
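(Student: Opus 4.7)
The plan is to proceed by induction on the number of non-root vertices of the tree $T = T_\Sigma$, using Proposition~\ref{prop-SRemb he Semb} throughout to reduce to round separating systems, and Lemma~\ref{lem:Balls Complement} to replace round balls by marked points where convenient. The rounding allows us to think of a point of $\pSep_0^{\rho\sqcup\Sigma}$ as a link together with a nested family of round balls, one for each non-root vertex of $T_\Sigma$.

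\textbf{Base case.} When $T$ has a single non-root vertex (necessarily labeled, so $n=1$), the separating system is a single sphere around $L_1$, and the forgetful map $\pSep_0^{\rho\sqcup\Sigma}\to\link(L_1)$ is a Serre fibration whose fiber is the space of round balls containing a fixed compact link in $\R^3$; this is contractible. Thus $\pi_1(\pSep_0^{\rho\sqcup\Sigma})\cong G_1 = G_L$, which matches $G(T_\Sigma) = G_L$ since $\chi(T_\Sigma)$ is trivial in this case.

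\textbf{Inductive step.} Pick a leaf $v$ of $T$, which by Definition~\ref{def:Ltree} is labeled by some $i\in\{1,\ldots,n\}$, and let $u$ be its parent. Let $T'$ denote the tree obtained by pruning $v$ and, if needed, smoothing a bivalent (or univalent) $\emptyset$-vertex that appears at $u$; let $\Sigma'$ be a corresponding essential separating system. Consider the forgetful map
\[
\phi\colon\pSep_0^{\rho\sqcup\Sigma}\longrightarrow\pSep_0^{\rho\sqcup\Sigma'},
\]
which is a Serre fibration by isotopy extension. The fiber parametrizes a round ball $B_v$ containing $L_i$ inside the chamber $C_u$, and by Lemma~\ref{lem:Balls Complement} together with a shrinking argument it is weakly equivalent to $\uemb(L_i, C_u)$. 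Applying the long exact sequence of $\phi$ together with the inductive hypothesis $\pi_1(\pSep_0^{\rho\sqcup\Sigma'})\cong G(T')$, and splitting as needed via section maps coming from the inclusions $\beta_i$ of Lemma~\ref{lem:Snow Globe}, yields the desired identification $\pi_1(\pSep_0^{\rho\sqcup\Sigma})\cong G(T_\Sigma)$. The contribution of the fiber's $\pi_1$ matches precisely the generators $\chi(g,L(v))$ that appear in $\chi(T_\Sigma)$ but not in $\chi(T')$.

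\textbf{Normality.} For the moreover claim, I verify directly that conjugation by elements of $G_L$ preserves $\chi(T_\Sigma)$. For $y\in G_i\subseteq G_L$ inducing $\tilde\varphi_y \in\Aut(H_i)$ via the Dahm homomorphism, and for $\chi(g, L(v))\in\chi(T_\Sigma)$ with $g\in H_i$, choose a geometric representative of $y$ supported in a small neighbourhood of $L_i$ and a geometric representative of $\chi(g,L(v))$ that moves $B_v$ along a loop based near $L_i$. Composing these and unpacking via Dahm produces the formula
\[
y\cdot\chi(g, L(v))\cdot y^{-1} = \chi(\tilde\varphi_y(g), L(v)),
\]
which remains in $\chi(T_\Sigma)$. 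For $y\in G_j$ with $j\neq i$, geometric representatives can be realized disjointly, so $y$ and $\chi(g,L(v))$ commute.

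\textbf{Principal obstacle.} The main technical difficulty lies in the combinatorial case analysis at the parent vertex $u$: whether $u$ is labeled by some $j\in\{1,\ldots,n\}$ or by $\emptyset$, and whether pruning creates a bivalent or univalent $\emptyset$-vertex that must be smoothed to get a valid rooted $L$-tree $T'$. A second challenge is rigorously identifying $\pi_1$ of the fiber $\uemb(L_i, C_u)$ and matching its image inside $\pi_1(\pSep_0^{\rho\sqcup\Sigma})$ against the intended new generators of $\chi(T_\Sigma)$ relative to $\chi(T')$.
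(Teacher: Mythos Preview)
Your overall strategy—a leaf-pruning induction—is genuinely different from the paper's root-peeling induction. The paper forgets everything \emph{inside} the outermost balls (the children of the root), obtaining a fibration whose base is $\uemb(B_1\sqcup\cdots\sqcup B_k,\R^3)$ or $\uemb(L_n\sqcup B_1\sqcup\cdots\sqcup B_k,\R^3)$ and whose fiber is a product of smaller $\pSep_0$'s. Your approach instead forgets one leaf piece $L_i$ together with its ball, so the base is a single smaller $\pSep_0$ and the fiber is an embedding space of $L_i$ in a chamber. In principle both strategies can succeed, but the paper's is more directly adapted to the definition of $\chi(T_\Sigma)$, which is organised by parent--child relations from the root down.

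There are, however, genuine gaps in your proposal. First, your base case is wrong: if $T$ has exactly one non-root vertex, then the root is univalent, hence cannot be labeled $\emptyset$, so the root carries a label and $n=2$, not $n=1$. The configuration you describe (a single sphere around $L_1$ with $n=1$) is not an essential separating system, since the unbounded component of $\R^3\setminus\Sigma$ is homeomorphic to $\Int(S^2\times I)$. The correct base case is $T$ equal to a single root vertex, $\Sigma=\emptyset$, and $\pi_1(\pSep_0^{\rho\sqcup\emptyset})=G_1$.

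Second, the fiber of your map $\phi$ is not $\uemb(L_i,C_u)$: the chamber $C_u$ contains both the link piece $\rho_j$ (if $u$ is labeled $j$) and the closed balls $\overline{B_{w_k}}$ from the other children of $u$, and you must embed $L_i$ together with a bounding ball in the complement of all of this. You need to compute $\pi_1$ of this fiber (it is $G_i\times H_j$ when $u$ is labeled $j$, and $G_i$ when $u$ is $\emptyset$-labeled) rather than assert it. Third, your splitting via $\beta_i$ does not give a section of $\phi$; the actual section sends a generator $\chi(g,L(w))$ of $G(T')$ (with $w$ an ancestor of $u$) to the same motion in $G(T_\Sigma)$, but now $L(w)$ has acquired the extra piece $L_i$. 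You must check that this really defines a homomorphic section and that the resulting semidirect product matches $G(T_\Sigma)$. Finally, in your normality argument, when $L_j\in L(v)$ the supports of $y\in G_j$ and $\chi(g,L(v))$ are not disjoint (both move $L_j$); they still commute, but for the reason the paper gives—$y$ happens inside the ball while $\chi$ translates the ball—not because of disjointness.
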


\begin{proof}
We prove this by induction on the number of pieces. For the base case $L=L_1$, $\plink(L_1)=\link(L_1)$, and by definition $\pi_1(\link(L_1))=G_1$. 

Now suppose $L=L_1\sqcup\cdots\sqcup L_n$ with $n>1$. Let $r$ be the root and $\Desc_1(r)=\{v_1,\ldots, v_k\}$. Each $v_j$ together with its descendants induces a rooted subtree $T_j$ of $T$, where $v_j$ is the root. Since $\Sigma$ is a separating system, each $T_j$ has label set containing at least one $i$-labelled vertex for $i\in \{1,\ldots , n\}$.

Furthermore, each $v_j\in \Desc_1(r)$ defines an edge~$\{r,v_j\}$ corresponding to a sphere $S_j$. Let $B_j$ be the ball bounded by $S_j$.  Let $\rho(j)$ be the sublink of $\rho$ 
which is the union of all pieces in the interior of~$B_j$. Since the interior of $B_j$ is homeomorphic to $\R^3$, we can regard the union of spheres in the interior of $B_j$ as a separating system $\Sigma_j$ for $\rho(j)$, with dual tree $T_j$.  Denote by $\pSep_0^{\rho(j)\sqcup \Sigma_j}$ the connected component of $\pSep_0$ for $\rho(j)$ containing $\rho(j)\sqcup \Sigma_j$. Since the root $r$ of~$T$ cannot be univalent, the number of pieces in $\rho(j)$ is less than~$n$. Therefore, by induction, $\pi_1(\pSep_0^{\rho(j)\sqcup \Sigma_j})\cong G(T_{\Sigma_j})$.

Now consider the map which forgets the link pieces and spheres in the interior of each $B_j$. The fiber is homeomorphic to the product $\prod_{j=1}^k\pSep_0^{\rho(j)\sqcup \Sigma_j}$.  There are two cases, depending on whether the root $r$ is labelled by some $i\in \{1,\ldots,n\}$ or not.  First consider the case in which $r$ is labelled by $\emptyset$.  In this case, we obtain a fibration
\[\prod_{j=1}^k\pSep_0^{\rho(j)\sqcup \Sigma_j}\hookrightarrow \pSep_0^{\rho\sqcup\Sigma}\rightarrow \uemb(B_1\sqcup\cdots\sqcup B_k,\R^3). \]
By Lemma \ref{lem:equiv between balls and config}, the base space is homotopy equivalent to $\pconf_k(\R^3)$, which is simply connected.   Hence $\pi_1(\pSep_0^{\rho\sqcup\Sigma})\cong \prod_{j=1}^kG(T_{\Sigma_j})$. By considering generators, we see the latter is equal to $G(T_\Sigma)$, proving the proposition in this case. 

Now suppose $r$ is labelled by some $i\in \{1,\ldots , n\}$. Relabelling if necessary, we may assume that $r$ is labelled by $n$.  In this case we have the following fibration
\begin{equation}\label{eqn - root labelled fiber sequence}
\prod_{j=1}^k\pSep_0^{\rho(j)\sqcup \Sigma_j}\hookrightarrow \pSep_0^{\rho\sqcup\Sigma}\rightarrow \uemb(L_n\sqcup B_1\sqcup\cdots\sqcup B_k,\R^3).
\end{equation}

We claim that the fundamental group of the base space is isomorphic to the group $(\prod_{j=1}^k H_n)\rtimes G_n$.  By Lemma \ref{lem:Balls Complement}, $\uemb(L_n\sqcup B_1\sqcup\cdots\sqcup B_k,\R^3)$ is homotopy equivalent to $\uemb(L_n\sqcup \{p_1,\ldots,p_k\},\R^3)$, where $\{p_1,\ldots, p_k\}$ are $k$ distinct labelled points. The latter fits into a fiber bundle 
\[\pconf_k(\R^3\setminus L_n)\hookrightarrow \uemb(L_n\sqcup \{p_1,\ldots,p_k\},\R^3)\rightarrow \link(L_n).\]

The fundamental group of $\link(L_n)$ is $G_{n}$ by definition.  Since a point has codimension 3 in $\R^3\setminus L_n$, it follows that $\pi_1(\pconf_k(\R^3\setminus L_n))\cong \prod_{j=1}^kH_n$. Hence on the level of $\pi_1$
we obtain a short exact sequence
\begin{equation*}1\rightarrow \prod_{j=1}^kH_n\rightarrow \pi_1(\uemb(L_n\sqcup \{p_1,\ldots,p_k\},\R^3))\rightarrow G_{n}\rightarrow 1.
\end{equation*} This sequence is split, since the corresponding map of spaces has a section up to homotopy, defined by putting $L_n$ in one half space and $\{p_1,\ldots,p_k\}$ in the other. This proves the claim.

We return to Equation (\ref{eqn - root labelled fiber sequence}). By induction, the fundamental group of each $\pSep_0^{\rho(j)\sqcup \Sigma_j}$ is generated by $G(T_{\Sigma_j})$, while $\pi_1(\emb(L_n\sqcup B_1\sqcup\cdots\sqcup B_k,\R^3))\cong (\prod_{j=1}^k H_n)\rtimes G_n$. Moreover, the product $\prod_{j=1}^k H_n$ is generated by a family of embeddings that sends $B_j$ along some loop in the complement of $L_n$ representing an element $g_n\in H_n= \pi_1(\R^3\setminus L_n)$.  This induces the element $\chi(g_n,L(v_j))$ on the level of fundamental groups. (Recall $L(v)$ is the set of pieces~$L_j$ such that~$j$ is a label in the descending branch of~$T$ defined by $v$.) Hence $G(T_\Sigma)$ is generated as claimed.

Finally, we prove that $\chi(T_\Sigma)$ is normalised by $G_L$. By induction, the partial conjugations $\chi(T_{\Sigma_j})$  are normalised by each $G_{p}$ if $L_p$ is a piece in $\rho(j)$.  Moreover, for each $j'\neq j$, elements of $G(T_{\Sigma_j})$ commute with $G(T_{\Sigma_j'})$ because they have disjoint support as automorphisms. In particular, if $L_q$ is a piece in $\rho(j')$, then $G_q$ commutes with $\chi(T_{\Sigma_j})$. Lastly, the existence of the semidirect product decomposition $(\prod_{j=1}^k H_n)\rtimes G_n$ now implies that the added partial conjugations are also normalised by $G_L$, since $G_{i}$ commutes with $\prod_{j=1}^k H_n$ for each $i\neq n$.
\end{proof}
\begin{rem}\label{rem:Normalise}
By Lemma \ref{lem:ComType}, all combinatorial types of separating system are realised for a given $\rho$. Since every element of $\FRL$ arises in some separating system,  it follows from Proposition~\ref{prop:Bubble Boyz} that $G_L$ normalises $\FRL$. It follows from the exact sequence in (\ref{eqn:GL semidirect FR}) that $\pi_1(\plink(L))$ contains $\FRL\rtimes G_L$ as a subgroup. 
\end{rem}

\subsection{Finishing the proof of Theorem \ref{abcthm - motion group}} Recall that our goal is to show that $\ker(r)=\FRL.$ We say that an automorphism $\varphi$ of $H_L$ \emph{preserves the free splitting} $H_1*\cdots*H_n$ if $\varphi(H_i)=H_i$ for each $i$.  We claim that to prove $\ker(r)=\FRL$ it suffices to show:

\begin{lem}\label{prop: Fix Splitting in GL}
If $g\in \pi_1(\plink(L))$ and $D(g)$ preserves the free splitting $H_1*\cdots* H_n$, then $g\in G_L$.
\end{lem}
 
Assuming the lemma, let us prove:

\begin{prop}\label{prop - ker(r)}Let $r\colon \pi_1(\plink(L))\rightarrow G_L$ be as in Equation (\ref{eq map r}).
    Then $\ker(r)=\FRL$ and in particular, $\pi_1(\plink(L))\cong \FRL\rtimes G_L$.
\end{prop}
\begin{proof}
Suppose $g\in \ker(r)$.  By Lemma \ref{lem:Conjugation}, $D(g)$ sends each factor $H_i$ of $H_L$ to a conjugate of itself. Since $\FRL\leq \ker(r)$, there exists an element $h\in \FRL $ such that $D(gh^{-1})$ preserves the free splitting $H_1*\cdots*H_n$.  But then by Lemma \ref{prop: Fix Splitting in GL}, we see that $gh^{-1}\in \ker(r)\cap G_L=\{1\}$. This means that $g=h\in \FRL$, proving that  $\FRL=\ker(r)$. The semidirect product follows since $r$ is split.
\end{proof}

It remains to prove the lemma. For this, we will use the homotopy equivalence from Theorem \ref{abcthm - sep equiv} together with the description of the motion group of $L$ with a separating system from Proposition \ref{prop:Bubble Boyz}.

\begin{proof}[Proof of Lemma \ref{prop: Fix Splitting in GL}]

By Corollary~\ref{cor - PSep Homotopy equivalence} to Theorem \ref{abcthm - sep equiv},~$\plink(L)\simeq |\pSep_\bt|$. We can therefore lift any class in~$\pi_1(\plink(L))$ to~$|\pSep_\bt|$, and moreover (by \cite[Lemma 2.1]{EbertRandalWilliams}) we can find a representative $\gamma\colon S^1\rightarrow \plink(L)$ such that the image of $\gamma$ lifts to the 1-skeleton $|\pSep_\bt|^{(1)}$ (as defined in \cite[Equation (1.16)]{EbertRandalWilliams}). In particular we obtain a subdivision of $[0,1]$ into intervals $I_0J_1\cdots I_{p-1}J_{p}I_{p}$ such that $\gamma(I_k)$ lifts to $\pSep_0 $ and $\gamma(J_l)$ lifts to $\pSep_1 $ for all $k,l$ (by an abuse of notation, we will omit $\gamma$ and think of the $I_k,~J_l$ as paths themselves). Since our basepoint (from Example~\ref{example - basepoint SS}) lies in $\pSep_0^{L\sqcup \Sigma_L}$, both $I_0$ and $I_p$ lift to $\pSep_0^{L\sqcup \Sigma_L}$. We will show that $g$ can be written as a product of partial conjugations and elements of $G_L$.  Since $G_L$ normalises the partial conjugations by Remark \ref{rem:Normalise}, our assumption that $D(g)$ preserves the free splitting $H_1*\cdots* H_n$ will imply the lemma.

For each $1\leq l\leq p$, $J_l$  connects two components of $\pSep_0$. Let $\rho_{l}\in \plink(L)$ be the embedding determined by $J_l(0)$.  At $J_l(0)$, there are two separating systems $\Sigma_{l-1}$ and $\Sigma_l$ for $\rho_l$.   On the interior of $J_l$, both separating systems are disjointly embedded with the link and vary only by isotopy. Denote by $\pSep_1^{\rho_l\sqcup\Sigma_{l-1}\sqcup\Sigma_l}$ the connected component of $\pSep_1$ containing $\rho_l\sqcup\Sigma_{l-1}\sqcup\Sigma_l$. 

{\bf Claim:}
On each $J_l$ we may assume that the embeddings of $L$, and the associated separating systems~$\S_{l-1}$ and $\S_l$ are fixed.

\noindent\emph{Proof of claim.} There's a forgetful map $f:\pSep_1^{\rho_l\sqcup\Sigma_{l-1}\sqcup\Sigma_l} \to \pSep_0^{\rho_l\sqcup\Sigma_{l}}$, given by forgetting the embedding of~$\S_{l-1}$. We replace the segments~$J_l$ and~$I_{l+1}$ with segments~$J'_l$ and~$I'_{l+1}$ as follows. Let~$J'_l$ be the path in ~$\pSep_1^{\rho_l\sqcup\Sigma_{l-1}\sqcup\Sigma_l}$ where~$L$, remains fixed at~$J_l(0)$ and~$\Sigma_{l-1}$ and $\Sigma_l$ are fixed on the interior of $J_l$, with the weight shifting linearly from $\Sigma_{l-1}$ to $\Sigma_l$. Let~$J_l|_{L\cup \S_l}$ be the isotopy of $L\cup{\S_l}$ which occurs in the interior of $J_l$. We concatenate~$f(J_L)=J_l|_{L\cup \S_l}$ with the path~$I_{l+1}$ to get~$I'_{l+1}$. 
By homotoping $\gamma$, we may replace the segment~$J_lI_{l+1}$ with~$J'_lI'_{l+1}$. Relabelling $J'$ to $J_l$ and $I'_{l+1}$ to~$I_{l+1}$ proves the claim.\qed

We next consider the contribution of each interval $I_k$. Denote the link embedding at $I_k(0)$ (resp. $I_k(1)$) by $\rho_k$ (resp. $\rho_k'$), with separating system $\Sigma_k$ (resp. $\Sigma_k'$).  Choose a path $\alpha_k$ from $\rho_k$ to $L$.  For $I_0(0)$ and $I_p(1)$ we choose these paths to be constant at $L$. By the claim, $\rho_{k-1}'=\rho_k$ for $1\leq k\leq p$, and the separating systems $\Sigma_{k-1}'$ and $\Sigma_k$ are disjointly embedded. Therefore we extend $\alpha_k$ to both separating systems by isotopy extension. The path $\overline{\alpha_k}*I_k*\alpha_{k+1}$ for $0\leq k\leq p-1$ lifts to an element of $\pi_1(|\pSep_\bt|)$, and the concatenation \[(I_0\alpha_1)(\overline{\alpha_1}J_1\alpha_1)(\overline{\alpha_1}I_1\alpha_2)\overline{\alpha_2}\cdots\alpha_p(\overline{\alpha_p}J_{p}\alpha_p)(\overline{\alpha_p}I_p)\] is homotopic to $\gamma$.  By the claim, each term $\overline{\alpha_l}J_l\alpha_l$ is trivial in $\pi_1(\link(L))$, since $J_l$ is stationary on the link.  

On the other hand, each term $\overline{\alpha_k}I_k\alpha_{k+1}$ can be regarded as an element of the motion group $\pi_1(\pSep_0^{\rho_k\sqcup \Sigma_k})$. By Proposition \ref{prop:Bubble Boyz}, this is an element $g_p$ of $G(T_{\Sigma_k})$ which is a product of elements of $\FRL$ and elements of $G_L$. Since $G_L$ normalises $\FRL$ (Remark \ref{rem:Normalise}), we can write this as a product $g_k=h_kt_k$, where $h_k\in \FRL$ and $t_k\in G_L$.  Doing this for all~$ 0\leq k \leq p$ gives \begin{align*}g=g_0\cdots g_p&=(h_0t_0)(h_1t_1)\cdots(h_pt_p)\\
&=(h_0h_1'\cdots h_p')(t_0\cdots t_p)
\end{align*}
where on the last line we used the fact that $G_L$ normalises $\FRL$ again. Since $D(G_L)$ preserves the free splitting $H_1*\cdots* H_n$, the product $D(t_0\cdots t_p)$ also does.  Now since $D(g)$ preserves the free splitting by assumption we conclude that $D(h_0h_1'\cdots h_p')$ preserves the free splitting as well. Under $D$, each element in $\FRL$ conjugates some $H_j$ by $g\in H_i$ where $i\neq j$.  Hence the only way that $D(h_0h_1'\cdots h_p')$ preserves the free splitting is if $D(h_0h_1'\cdots h_p')=1$. By Lemma \ref{lem - FR injects}, this implies $h_0h_1'\cdots h_p'=1\in \FRL$, hence $g=t_0\cdots t_p\in G_L$, as desired.
\end{proof}

We are now ready to revisit the unlabelled case and prove Theorem \ref{abcthm - motion group}:

\begin{restate}{Theorem}{abcthm - motion group}
$\pi_1(\link(L))$ is isomorphic to $(\FRL\rtimes G_L)\rtimes P_L$.
\end{restate}
 \begin{proof}

By Lemma \ref{lem:Permute Links}, the homomorphism $\eta\colon\pi_1(\link(L))\rightarrow P_L$ induces a short exact sequence \[1\rightarrow \pi_1(\plink(L))\rightarrow \pi_1(\link(L))\rightarrow P_L\rightarrow 1.\]
By Proposition \ref{prop - ker(r)}, $\pi_1(\plink(L))\cong \FRL\rtimes G_L$.
We now check that the lifts of elements of $P_L$ described in the proof of Lemma~\ref{lem:Permute Links} split the surjection $\eta$. To show this is a homomorphism we check it respects the relations. Let $\gamma=\sigma_1\cdots \sigma_q$ be a product of these lifts which induces the trivial permutation.  Since each $\sigma_i$ acts as a sequence of translations, we have that $r(\gamma)=1$.  On the other hand, $\gamma$ also preserves the free splitting $H_1*\ldots*H_n$.  Hence $\gamma=1\in \pi_1(\plink(L))$ by Lemma \ref{prop: Fix Splitting in GL}.
\end{proof}
\subsection{Example: $H$-trivial links}Let $H$ be the Hopf link.  By work of Goldsmith, the topological motion group is isomorphic to the quaternion group $Q_8$, since $H$ is isomorphic to the torus link T(2,2) \cite{Goldsmith}. In \cite{DamianiKamada}, Damiani and Kamada showed that the same holds for the \emph{ring motion group} of $H$, which is the fundamental group of the round unparametrised embedding space.
In upcoming work, we show that the smooth and round embedding spaces are homotopy equivalent and, independently to the Daminai-Kamada result, obtain that $\pi_1(\link(H))\cong Q_8$. Using this and Theorem \ref{abcthm - motion group}, we can compute the fundamental group of $\link(L)$ when $L$ is a disjoint union of an $n$-component unlink and $m$ Hopf links. Following \cite{DamianiKamada}, we call such a link \emph{$H$-trivial}.

\begin{exam}[H-trivial links]\label{example - Htrivial} Let $L=H_{n,m}$ be a disjoint union of an $n$-component unlink and $m$ Hopf links. Then we have \[\pi_1(\R^3\setminus L)=\overbrace{\Z*\cdots*\Z}^{n}*\overbrace{\Z^2*\cdots*\Z^2}^{m}. \] We can permute each of the unlink components and each of the Hopf links, so $P_L\cong S_n\times S_m$.  Each unknot contributes a $\Z/2$ factor to $G_L$, while each Hopf link contributes a $Q_8$ factor.  Thus $G_L\cong (\Z/2)^n\times ( Q_8)^m$. Therefore we obtain
\[\pi_1(\link(H_{n,m}))\cong \left(\FR(\overbrace{\Z*\cdots*\Z}^{n}*\overbrace{\Z^2*\cdots*\Z^2}^{m})\rtimes ((\Z/2)^n\times ( Q_8)^m)\right)\rtimes (S_n\times S_m).\]
For $m=0$ this recovers the result for unlinks. Since $\Z$ and $\Z^2$ are finitely presented, so is $\mathcal{FR}(H_{n,m})$. An explicit presentation can be deduced from the relations in the proof of Lemma \ref{lem - FR injects}. Hence, $\pi_1(\link(H_{n,m}))$ is finitely presented as well. 
\end{exam}

From personal correspondence, we believe that Damiani, Kamada and Piergallini have independently computed a presentation for this motion group.

\subsection{Passing to~\texorpdfstring{$S^3$}{the 3-sphere} }
In this section we discuss the effect on homotopy groups, and specifically $\pi_1$, when passing to embeddings in $S^3$ instead of $\R^3$.  Consider the space of embeddings $\uemb(L\sqcup \{*\},S^3)$, where $\{*\}$ is a disjoint point.  Then, regarding  $\R^3$ as $S^3\setminus \{*\}$, we identify  $\uemb(L,S^3\setminus \{*\})\cong \uemb(L,\R^3)=\link(L)$.  We then obtain the following two fibrations.\[
\xymatrix@C=5mm{&\uemb(\{*\},S^3\setminus L)=S^3\setminus L\ar[d]&\\
\uemb(L,S^3\setminus\{*\})\cong\link(L)\ar[r]&\uemb(L\sqcup \{*\},S^3)\ar[r]\ar[d]&\uemb(\{*\},S^3)=S^3\\
&\uemb(L,S^3)&
}
\]
Since $S^3$ is 2-connected, the long exact sequence for the horizontal fibration yields an isomorphism $\pi_1(\link(L))\cong \pi_1(\uemb(L\sqcup\{*\},S^3))$. Combined with the vertical fibration we then have a short exact sequence
\begin{equation}\label{eqn - conjugation les}
    \pi_2(\uemb(L,S^3))\rightarrow \pi_1(S^3\setminus L)\rightarrow \pi_1(\link(L))\rightarrow \pi_1(\uemb(L,S^3))\rightarrow 1.
\end{equation}

Recall that for any group $G$ the conjugation action of $G$ on itself induces a homomorphism $G\rightarrow \Aut(G)$ whose image is the the subgroup of \emph{inner automorphisms} $\Inn(G)$. The kernel of this homomorphism is the center of $G$.  
\begin{prop}\label{prop - inner auts s3}
The image of $\pi_1(\uemb(\{*\},S^3\setminus L))$ in $\pi_1(\link(L))$ is isomorphic to the group of inner automorphisms of $\pi_1(S^3\setminus L)$.  In particular, the image of $\pi_2(\uemb(L,S^3))$ is the center of $\pi_1(S^3\setminus L)$.
\begin{proof} Regard $\{*\}\in S^3\setminus L$ as a basepoint for the fundamental group. An element of $\pi_1(\uemb(\{*\},S^3\setminus L))$  drags the basepoint around a loop $\gamma$ representing a class in $\pi_1(S^3\setminus L)$, which changes the identification of the fundamental group by conjugation by $[\gamma]$. The image of $\pi_1(\uemb(\{*\},S^3\setminus L)$ is therefore the subgroup of inner automorphisms which lies in $\FRL\leq \pi_1(\link(L))$. The statement about the image of $\pi_2(\uemb(L,S^3))$ follows from exactness of Equation (\ref{eqn - conjugation les}).
\end{proof}
\end{prop}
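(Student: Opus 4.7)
The plan is to identify the image of $i_*\colon\pi_1(\uemb(\{*\},S^3\setminus L))\to \pi_1(\link(L))$ by computing its Dahm action, showing the image sits inside $\FRL$, and applying Lemma~\ref{lem - FR injects} to conclude that it is isomorphic to $\Inn(\pi_1(S^3\setminus L))$. The second statement will then follow directly from exactness of~(\ref{eqn - conjugation les}).

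The key geometric observation is that an element $[\gamma]\in \pi_1(S^3\setminus L,*)$ corresponds, via the fibration with fibre $S^3\setminus L$, to the loop $t\mapsto (L,\gamma(t))$ in $\uemb(L\sqcup\{*\},S^3)$, and under the isomorphism $\pi_1(\uemb(L\sqcup\{*\},S^3))\cong\pi_1(\link(L))$ to the motion $t\mapsto \Phi_t(L)$, where $\Phi_t$ is an ambient isotopy of $S^3$ with $\Phi_0=\id$ and $\Phi_t(\gamma(t))=*$. Since dragging the basepoint along $\gamma$ and back transports the identification of $\pi_1(S^3\setminus L,*)$ by $[\gamma]$, the Dahm action of this motion is conjugation by $[\gamma]$. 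Hence $D\circ i_*$ is the canonical surjection $\pi_1(S^3\setminus L)\to \Inn(\pi_1(S^3\setminus L))$ with kernel $Z(\pi_1(S^3\setminus L))$. On generators $g\in H_i$ of the free product $\pi_1(S^3\setminus L)=H_1*\cdots *H_n$, one realises $i_*(g)$ explicitly as the product of partial conjugations $\prod_{j=1}^n\chi(g,L_j)\in\FRL$: the factor $\chi(g,L_j)$ for $j\neq i$ lies in $\FRO$ and the factor $\chi(g,L_i)$ lies in $G_i\subseteq G_L$, together accounting for the simultaneous push of each piece $L_j$ along $\gamma$. Thus $i_*(\pi_1(S^3\setminus L))\subseteq \FRL$.

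By Lemma~\ref{lem - FR injects}, $D$ restricts to an isomorphism from $i_*(\pi_1(S^3\setminus L))$ onto $\Inn(\pi_1(S^3\setminus L))$, proving the first assertion. For the second, exactness of~(\ref{eqn - conjugation les}) identifies $\ker(i_*)$ with the image of $\pi_2(\uemb(L,S^3))\to \pi_1(S^3\setminus L)$; since $\mathrm{image}(i_*)\cong \pi_1(S^3\setminus L)/\ker(i_*)$ and the first assertion gives $\mathrm{image}(i_*)\cong \pi_1(S^3\setminus L)/Z(\pi_1(S^3\setminus L))$, we obtain $\ker(i_*)=Z(\pi_1(S^3\setminus L))$. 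The main obstacle is the geometric identification $i_*(g)=\prod_j\chi(g,L_j)$: this requires a careful ambient isotopy argument to ensure that the single drag of the basepoint around $\gamma$ is realised by a simultaneous compatible motion on all pieces of $L$ matching the defining description of each partial conjugation generator.
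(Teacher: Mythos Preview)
Your proposal is correct and follows essentially the same approach as the paper: both argue that dragging the basepoint around a loop $\gamma$ induces conjugation by $[\gamma]$ under the Dahm homomorphism, so the image lands in $\FRL$, and the second assertion follows from exactness of~(\ref{eqn - conjugation les}). Your version is more explicit than the paper's terse sketch---you spell out the identification $i_*(g)=\prod_j\chi(g,L_j)$ and invoke Lemma~\ref{lem - FR injects} directly to pass from the Dahm image $\Inn(H_L)$ back to an isomorphic subgroup of $\pi_1(\link(L))$, whereas the paper leaves both of these steps implicit in the single phrase ``the subgroup of inner automorphisms which lies in $\FRL$.''
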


Inputting Proposition~\ref{prop - inner auts s3} with Equation~(\ref{eqn - conjugation les}), Theorem~\ref{abcthm - motion group}, and the fact that the image of $\pi_1(\uemb(\{*\},S^3\setminus L)$ is normal now implies the following corollary.

\begin{restate}{Corollary}{abccor - motion group S3} Let $H_L=\pi_1(S^3\setminus L)$. Then 
\[\pi_1(\uemb(L,S^3))\cong ((\FRL\rtimes G_L)/\Inn(H_L))\rtimes P_L.\]
where $\Inn(H_L)$ is the group of inner automorphisms of $H_L$.
\end{restate}

We end this section with some remarks on Theorem \ref{abcthm - motion group} and Corollary \ref{abccor - motion group S3}.   Recall that if $L$ has a single piece, then $S^3\setminus L$ is aspherical.  In particular, $H_L=\pi_1(S^3\setminus L)$ is finitely presented, torsion-free and has cohomological dimension at most 2.  If $H_L$ is abelian, then it is isomorphic to either $\Z$ or $\Z^2$.  The former occurs exactly when $L$ is the unknot $U$, and the latter exactly when $L$ is the Hopf link $H$.

If $L$ has at least two pieces, $H_L$ is a nontrivial free product, hence its center is trivial. In this case $\FRL\leq \FRL\rtimes G_L $ is always infinite.  $\FRL/\Inn(H_L)$ is also infinite, unless $L$ is the two component unlink $U\sqcup U$ or a disjoint union of two Hopf links $H\sqcup H$.  If $L$ has a single piece, then $P_L$ is trivial by definition and $\FRL=\Inn(L)$, so $\pi_1(\uemb(L,S^3))$ is equal to $G_L/\FRL$.  On the other hand, in this case $\FRL$ is infinite if and only if $H_L$ is not abelian. By the preceding discussion, $\pi_1(\link(L))$ is thus finite if and only if $L=U$ or $L=H$, and in this case $\pi_1(\link(U))$ is $\Z/2$ while $\pi_1(\link(H))$ is $Q_8$.

\section{Contractibility of~\texorpdfstring{$|\Sep(\rho)^\delta_\bt|$}{the fiber}}\label{section - contractible}

The aim of this section is to prove Theorem~\ref{thm-discrete fiber is contractible}. To do this, we introduce a larger contractible space, and show that the inclusion of~$|\Sep(\rho)^\delta_\bt|$ into this space is a weak equivalence. We also prove Lemma~\ref{lem - discrete missing sep is contractible}. After writing the argument in this section we discovered that work of Mann and Nariman \cite[Section 3]{MannNariman}, and an unpublished manuscript of Hatcher \cite{Hatcherwrongway}, address similar complexes, by similar means. However, as our argument is considerably more detailed we felt it worth including in full.

\begin{defn}
Fix~$\rho\in \link(L)$. The semi-simplicial space $\TSeprho_\bt$ is defined as follows:
\begin{itemize}
    \item The space $\TSeprho_0$ of 0-simplices is equal to~$\Sep(\rho)^\delta_0$, \emph{i.e.,}~it is given by the subspace
    \[\{(\gamma, \Sigma) \in \ESS(L)\text{ such that }\gamma=\rho\}\] 
    equipped with the discrete topology.
    \item The space $\TSep_p$ of~$p$-simplices is the subspace of~$(\TSeprho_0)^{p+1}$ consisting of ordered~$(p+1)$ tuples~$(\rho,\Sigma_i)$ such that for all~$0\leq i<j\leq p$, either~$\Sigma_i\cap \Sigma_j= \emptyset$ or the spheres of~$\Sigma_i$ and~$\Sigma_j$ intersect transversely. 
    \item Face maps $\p_p^{i}\colon \Sep(\rho)_p\rightarrow \Sep(\rho)_{p-1}$ for $0\leq i\leq p$ are given by forgetting the~$i$th entry in the tuple.
\end{itemize}
\end{defn}

There is an inclusion~$\iota_p:\Sep(\rho)^\delta_p\hookrightarrow \TSeprho_p$, which induces an inclusion on geometric realisations $$|\iota_\bt|:|\Sep(\rho)^\delta_\bt|\hookrightarrow |\TSeprho_\bt|.$$ In this section we will
\begin{enumerate}[(a)]
    \item show $|\TSeprho_\bt|$ is contractible, and
    \item show~$|\iota_\bt|$ is a homotopy equivalence.
\end{enumerate}

\begin{lem}\label{lemma-existence of transverse system}
For a finite set~$J$ let~$\{\eta_j\}_{j\in J} \in~\bigoplus_{j\in J}\uemb(S^2,\R^3\setminus\rho)$ be a collection of embedded spheres in~$\R^3\setminus\rho$, with index set~$J$. Then there exists a separating system $\S \in \TSeprho_0$ with image transverse to every~$\eta_j$.
\begin{proof}
This follows from the residuality of transverse embeddings~\cite[Chapter 3, Theorem 2.1]{Hirsch}, which applied to our scenario shows that the subspace of separating systems transverse to a given embedded sphere~$\eta_j$ is residual in each connected component of $\TSeprho_0$. Since a countable intersection of residual subsets is residual, and~$J$ is finite and therefore countable, it follows that the subset of~${\S} \in \TSeprho_0$ with image pairwise transverse to the collection $\{\eta_j\}_{j\in J}$ is not only non-empty but dense.
\end{proof}
\end{lem}

\begin{rem}
In fact, we can find a separating system for any isotopy class with a prescribed combinatorial type, since these correspond to different connected components of~$\TSeprho_0$ for any fixed $\rho$. This won't be relevant in our proof.
\end{rem}

Recall from~\cite{EbertRandalWilliams} that points in~$|X_\bt|$ have the form~$[x,\ul{t}]$, where for some $p\geq 0$,~$x\in X_p$,~$t\in \Delta^p$, and a quotient is taken which identifies faces in the appropriate way. Recall that
$$\Delta^p=\{\ul{t}=(t_0,\ldots, t_p)\in \R^{p+1} \,|\, \sum_{i=0}^{p}t_i =1\}$$
with the subspace topology.
$|X_\bt|$ is then topologised by taking the product topology on~$\sqcup_{p\geq 0} X_p\times \Delta^p$ for each~$p$, and then the quotient topology upon gluing faces.

Via the above, we can think of a point in~$|\TSeprho_\bt|$ as a tuple~$(\S_0,\ldots, \S_p)\in \TSeprho_p$, where each~$\S_i$ has a \emph{weight}~$t_i$ between 0 and 1 associated to it, and $\sum_{i=0}^{p}t_i =1$. Passing to a face of the simplex corresponds to one of the~$t_i$ becoming zero, at which point the associated~$\Sigma_i$ is deleted from the tuple -- that is, we pass to $\TSeprho_{p-1}$. The weights can be thought of as `opacity' of the separating systems. This discussion also holds for~$|\Sep(\rho)^\delta_\bt|$, replacing $\TSeprho_p$ with $\Sep(\rho)^\delta_p$.

Let~$Y$ be a triangulated space. Then for any simplicial map~$F:Y\to |\TSeprho_\bt|$ we define the set
\[\im(F)_0=\{[\S,1]\in |\TSeprho_\bt| \, s.t. \, \exists \,p\in Y^{(0)}\text{ with } F(p)=[\S,1] \}
\]
\emph{i.e.,}~$\S\in \im(F)_0$ has weight 1, and is the image of a 0-simplex in~$Y$. Since the image of a~$p$-simplex is determined by its vertices,~$\im(F)_0$ is exactly the set of separating systems in the image of the map~$F$. 

\begin{prop}\label{prop-Temb is contractible}
$|\TSeprho_\bt|$ is contractible.
\begin{proof}
Consider a simplicial map~$f\colon S^k \to |\TSeprho_\bt|$ from a triangulation of the sphere~$S^k$, $k\geq 0$.  Since~$S^k$ is finite dimensional and compact, it follows that~$\im(f)_0=\{\Sigma_i\}_{i\in I}$ for~$I$ a finite set indexing~$(S^k)^{(0)}$. This is a point in $\sqcup_{j\in J}\uemb(S^2,\R^3\setminus\rho)$ for some finite set~$J$, \emph{i.e.,}~the image of a finite number of embedded spheres (individually embedded, not as a set) in $\R^3\setminus\rho$. By Lemma~\ref{lemma-existence of transverse system} there exists a separating system ${\S} \in \TSeprho_0$ with image transverse to each of these spheres. Coning off the triangulation of~$S^k$ to a triangulation of~$D^{k+1}$ with cone point~$c$, the map~$f$ extends to a map~$F:D^{k+1}\to |\TSeprho_\bt|$ such that~$F(c)={\Sigma}$ with weight 1. Thus $|\TSeprho_\bt|$ is contractible as required.
\end{proof}
\end{prop}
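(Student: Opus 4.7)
My plan is to prove contractibility by showing that every simplicial map $f\colon S^k \to |\TSeprho_\bt|$ from a triangulated sphere extends over $D^{k+1}$; the crucial gain over working with $|\Sep(\rho)^\delta_\bt|$ directly is that the semi-simplicial relation in $\TSeprho_\bt$ only requires transversality, rather than disjointness, of separating systems, which gives enough room to produce a single common cone point for any finite subcomplex.

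First I would observe that, since $S^k$ is compact and $f$ is simplicial, only finitely many separating systems $\S_1,\ldots,\S_n$ occur as vertex images of $f$. Collecting together all the spheres that make up these separating systems gives a finite indexed family $\{\eta_j\}_{j\in J}$ of embedded $2$-spheres in $\R^3\setminus \rho$. By Lemma~\ref{lemma-existence of transverse system}, one can then produce a separating system $\S^{\ast}$ for $\rho$ whose spheres are pairwise transverse to every $\eta_j$; in particular $\S^{\ast}$ is transverse (in the sense of the $\TSeprho_\bt$ face-relations) to each $\S_i$ individually.

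Next I would extend $f$ by coning: triangulate $D^{k+1}$ as the cone on the given triangulation of $S^k$ with cone point $c$, send $c$ to $[\S^{\ast},1]\in|\TSeprho_\bt|$, and send each simplex joining $c$ to a simplex $\sigma\subset S^k$ by prepending $\S^{\ast}$ to the tuple that $f$ assigns to $\sigma$, using the natural interpolation of barycentric weights. Transversality of $\S^{\ast}$ with each $\S_i$ guarantees that the tuple $(\S^{\ast},\S_{i_0},\ldots,\S_{i_p})$ is a valid $(p+1)$-simplex of $\TSeprho_\bt$ whenever $(\S_{i_0},\ldots,\S_{i_p})$ is a $p$-simplex, so the extension $F\colon D^{k+1}\to|\TSeprho_\bt|$ is well-defined and continuous. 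This forces $\pi_k(|\TSeprho_\bt|)=0$ for every $k\geq 0$, hence weak contractibility.

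The only substantive obstacle is the existence of the common transverse $\S^{\ast}$, and this is precisely what Lemma~\ref{lemma-existence of transverse system} provides: Hirsch's transversality theorem shows that, for each fixed $\eta_j$, the separating systems transverse to $\eta_j$ form a residual subset of the relevant connected component of $\TSeprho_0$, and a finite intersection of residual sets in a Baire space is still dense. Once this is in hand the coning construction is formal, and no further semi-simplicial machinery is needed beyond the definitions.
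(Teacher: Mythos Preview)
Your proposal is correct and follows essentially the same argument as the paper: both use Lemma~\ref{lemma-existence of transverse system} to find a single separating system transverse to the finitely many vertex images of a simplicial map $S^k\to|\TSeprho_\bt|$, and then cone off to extend over $D^{k+1}$. Your write-up is slightly more explicit about why the coned simplices lie in $\TSeprho_\bt$, and you correctly note that the argument gives weak contractibility (which suffices since $|\TSeprho_\bt|$, being the realisation of a semi-simplicial set, is a CW complex).
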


We now digress slightly and prove Lemma~\ref{lem - discrete missing sep is contractible}. Recall $\Sep(\rho, \hat{\Sigma})^\delta_p$ is the subspace of $\Sep(\rho)^\delta_p$ where the~$p+1$ mutually disjoint separating systems also do not intersect with a given  (not necessarily essential) separating system~$\hat{\S}$.
\begin{proof}[Proof of Lemma~\ref{lem - discrete missing sep is contractible}]
Consider a simplicial map~$f$ from a triangulation of~$S^k$ into $|\Sep(\rho, \hat{\Sigma})^\delta_\bt|$. Let~$(-1,1)\times \hat{\Sigma}\subset\R^3\setminus \rho$ be a tubular neighbourhood of~$\hat{\Sigma}$, such that~$\hat{\S}=(0\times \hat{\Sigma})\subset\R^3$. Then there exists an~$\epsilon>0$ such that~$(\epsilon \times \hat{\Sigma})$ is disjoint from all separating systems in~$\im(f)_0$, since being disjoint from a finite number of separating systems is an open condition. Let~$\S'\subset (\epsilon \times \hat{\Sigma})$ be a sub-collection of the spheres of~$(\epsilon \times \hat{\Sigma})$ such that~$\S'$ is an essential separating system for~$\rho$. Then, as in the proof of Proposition~\ref{prop-Temb is contractible}, $\S'$ acts as a cone point for~$\im(f)$, and thus~$f$ extends to a map~$F: c*S^k\cong D^{k+1}\to |\Sep(\rho, \hat{\Sigma})^\delta_\bt| $ such that~$F(c)={\Sigma'}$ with weight 1. Therefore $|\Sep(\rho, \hat{\Sigma})^\delta_\bt| \simeq *$.
\end{proof}

Recall~$\iota_p:\Sep(\rho)^\delta_p\hookrightarrow \TSeprho_p$ induces $|\iota_\bt|:|\Sep(\rho)^\delta_\bt|\hookrightarrow |\TSeprho_\bt|,$ and our aim was to show
\begin{enumerate}[(a)]
    \item show $|\TSeprho_\bt|$ is contractible (Proposition \ref{prop-Temb is contractible}), and
    \item show~$|\iota_\bt|$ is a homotopy equivalence.
\end{enumerate}
So, to prove Theorem \ref{thm-discrete fiber is contractible}, it remains to show that~$|\iota_\bt|$ is a homotopy equivalence. We show the relative homotopy groups vanish. Consider a representative~$f$ of a class in~$\pi_k(|\Sep(\rho)^\delta_\bt|)$. Then, by Proposition~$\ref{prop-Temb is contractible}$, this extends to a map~$F$ from~$D^{k+1}$ to $|\TSeprho_\bt|$ as shown below.

\begin{equation}\label{eq-pair of maps}
    \xymatrix{f: S^{k}\ar[r]\ar@{^{(}->}[d]_{\partial}& |\Sep(\rho)^\delta_\bt|\ar@{^{(}->}[d]^{|\iota_\bt|}\\
 F: D^{k+1}\ar[r]& |\TSeprho_\bt|
 }
\end{equation}
 
 In the following, let a \emph{pair of maps} be a pair~$(F,f)$ as in the above diagram. 
 Level-wise the right-hand terms are endowed with the discrete topology, so simplicial approximation applies and we can assume~$f$ and~$F$ are both simplicial maps, from triangulations of the sphere and disk respectively.
 Now, the relative homotopy groups vanish if we can homotope a pair of maps~$(F,f)$ to a pair~$(F',f')$ such that~$\im(F')\subset |\Sep(\rho)^{\delta}_\bt|$.  Roughly, our homotopy will occur in steps such that at each step the number of intersections in the image is reduced until we obtain the required pair. A transverse intersection is removed via a surgery move on spheres in a separating system. However it may be the case that we wish to remove a non-transverse intersection, and so the first step is to replace these with transverse intersections.
 
We therefore first introduce a complexity which counts non-transverse intersections. Let~$(F,f)$ be a pair of maps, let $p$ and~$q$ be in~$(D^{k+1})^{(0)}$, and let~$\Sigma_p$ and~$\Sigma_q$ in~$\im(F)_0$ correspond to~$F(p)$ and~$F(q)$, respectively. If~$\S_p\cap \S_q\neq \emptyset$ in~$\R^3$ then the intersection is a union of embedded circles in~$\R^3$. We distinguish two types of intersections.
\begin{enumerate}[(i)]
    \item \emph{Ghost intersections:} Intersections between spheres in~$\Sigma_p$ and~$\Sigma_q$ where $p\notin \lk_{D^{k+1}}(q)$ ($\lk_X(v)$ denotes the link of vertex~$v$ in simplicial complex~$X$).
    \item \emph{Real intersections:} Intersections between spheres in~$\Sigma_p$ and~$\Sigma_q$ where $p\in \lk_{D^{k+1}}(q)$.
\end{enumerate}

By the definition of $|\TSeprho_\bt|$, all real intersections in~$\im(F)_0$ are transverse. It is not the case that the ghost intersections are transverse, but the next lemma asserts we can always replace our pair with one for which this is true. 

\begin{defn}\label{defn-tangential complexity}
Let~$(F,f)$ be a pair of maps as in Equation~(\ref{eq-pair of maps}). For~$\Sigma\in \im(F)_0$, let~$tg_F(\Sigma)$ be 0 if $\Sigma$ intersects all~$\Sigma'$ in $\im(F)_0\setminus \S$ transversely, and 1 otherwise. 
Define the \emph{tangential complexity} of~$F$ to be, 
$$tc^F=\sum_{\S\in \im(F)_0}tg_F(\Sigma).$$
Let the tangential complexity of~$f$ be analogously described, \emph{i.e.,}
$$tc^f=\sum_{\S\in \im(f)_0} tg_f(\Sigma).$$
\end{defn}

In words,~$tc^F$ counts the number of separating systems in the image of~$F$ which have some non-transverse intersections with other separating systems in the image of~$F$. These are necessarily ghost intersections. Note that since $\im(f)_0\subseteq \im(F)_0$, it follows that~$tc^F\geq tc^f$.

 \begin{prop}\label{prop - fill with transverse intersections}
 Given a pair of maps~$(F,f)$ as in Equation~(\ref{eq-pair of maps}), there exists a pair of maps~$(G,g)$ with~$g$ homotopic to~$f$, such that~$tc^g=tc^G=0$. 
 \begin{proof}
 We again use~\cite[Theorem 2.1]{Hirsch} to replace separating systems with tangential intersections with separating systems that intersect transversely.
 
 {\bf Step 1}: If~$tc^{f}\geq 1$ we show that we can homotope~$f$ to a map~$f_1$ and replace~$F$ with~$F_1$ such that~$(F_1,f_1)$ are a pair, and~$tc^{f_1}< tc^f$.
 
Let $\S_p\in \im(f)_0$ such that~$f(p)=\Sigma_p$ and~$tc_f(\Sigma_p)=1$, \emph{i.e.,}~$\Sigma_p$ has a non-transverse intersection with some $\Sigma_q \in \im(f)_0$ corresponding to~$q\in (D^{k+1})^{(0)}$. Then since~$f$ maps~$S^k$ into~$ |\Sep(\rho)^\delta_\bt|$,~$\Sigma_p$ is disjoint from all~$\Sigma_r\in \im(f)_0$ such that~$r$ is a vertex in $\lk_{ S^k}(p)$. Being disjoint is an open condition in~$\ESS(L)$, so there is an open neighbourhood of~$\Sigma_p$ in~$\ESS(L)$ of separating systems which remain disjoint from the~$\S_r$ and from~$\rho$. By residuality of transverse embeddings, we can find a~$\Sigma'_p$ in this neighbourhood transverse to all separating systems in~$\im(F)_0$.
 
 Now we create a new triangulation of~$D^{k+1}$ by adding a vertex~$p'$ and the simplices of~$p'* \st_{S^k}(p)$. Let~$F_{1}$ be a simplicial map from this new triangulation of~$D^{k+1}$ to~$|\TSeprho_\bt|$ defined as follows. On~0-simplices let~$F_1$ agree with~$F$ away from~$p'$, and~$F_{1}(p')=\Sigma'_p$. Since~$\Sigma'_p$ is transverse to $\im(F)_0$ and disjoint from~$\Sigma_r$ when $r\in \lk_{ S^k}(p)$, we can extend this map over the simplices of the new triangulation to get a pair of maps~$(F_1,f_1)$. (The map $f_1:S^k\to |\Sep(\rho)^\delta_\bt|$ is given by~$F_{1}$ restricted to~$\p D^{k+1}$.) It remains to show that~$f_1$ is homotopic to~$f$. We achieve this via the homotopy~$h:([0,1]\times~S^{k})\to |\Sep(\rho)_\bt|$ with~$h_0=f$,
 ~$h_1=f_1$ and such that as~$t\in[0,1]$ varies from~$0$ to~$1$ the weight from the separating system~$\Sigma_p$ shifts to~$\Sigma'_p$. Formally~$h_t=h_0$ away from~$\st_{S^k}(p)$, $h_t(p)=[(\S_p,\S'_p),(1-t,t)]$ and, for a $k$-simplex containing~$p$ in~$\st_{S^k}(p)$,~$h_t$ linearly shifts the weight~$t_i$ attributed to~$\Sigma_p$ when $t=0$, to be attributed to~$\Sigma'_p$ when $t=1$. 
 
 A schematic of this homotopy is depicted in the Figure \ref{fig:WeightShiftHomotopy}, when~$k=1$.
 \begin{figure}[h!]
     \centering
     \begin{tikzpicture}{scale=0.6}
     \coordinate (A) at (.8,-.3);
     \coordinate (B) at (0,0);
     \filldraw[fill=champagne!60] (-1.75,2.5)--(-.5,1.5)--(A)--(-.5,-1.5)--(-1.75,-2.5);
     \filldraw[fill=cyan!40,opacity=0.7] (B)--(A)-- (-.5,1.5); 
     \filldraw[fill=cyan!40,opacity=0.7] (B)--(A)-- (-.5,-1.5); 
     \filldraw[fill=champagne!60] (-.8,-.2)--(B)-- (-.5,1.5); 
     \filldraw[fill=champagne!60] (-.8,-.2)--(B)-- (-.5,-1.5); 
     \draw (B)--(-.5,1.5);
     \draw (B)--(-.5,-1.5);
     \draw (-.8,-.2)--(-.5,1.5);
     \draw (-.8,-.2)--(-.5,-1.5);
     \draw (-.8,-.2)--(B);
     \draw (-.8,-.2)--(-1.75,.3);
     \draw (-.8,-.2)--(-1.75,-.4);
     \draw (-.5,1.5)--(-1.75,1);
     \draw (-.5,-1.5)--(-1.75,-2);
     \filldraw (B) circle(2pt);
     \filldraw (A) circle(2pt);
     \filldraw (-.5,1.5) circle(2pt);
     \filldraw (-.5,-1.5) circle(2pt);
     \filldraw (-.8,-.2) circle(2pt);
     \node at (-.2,1.5) {$\scriptstyle\Lambda_1$};
     \node at (-.2,-1.5) {$\scriptstyle\Lambda_2$};
     \node at (1.1,-.3) {$\scriptstyle\Sigma_p'$};
     \node at (-.3,.15) {$\scriptstyle\Sigma_p$};

     \end{tikzpicture}
     \caption{Replacing $(F,f)$ with $(F_1,f_1)$ when $k=1$. The image of~$F$ is shown in tan, and the image of~$F_1$ in tan and blue. The link of $\Sigma_p$ in $S^1$ is $\Lambda_1$ and $\Lambda_2$. The homotopy from~$f$ to $f_1$ replaces the star of $\Sigma_p$ in the boundary of $\im(F)$ with the star of $\Sigma_p'$ in the boundary of $\im(F_1)$. It linearly shifts the weight from $\Sigma_p$ to $\Sigma_p'$, which pushes the arc $\Lambda_1-\Sigma_p-\Lambda_2$ across the two rightmost blue triangles to $\Lambda_1-\Sigma_p'-\Lambda_2$.}
     \label{fig:WeightShiftHomotopy}
 \end{figure}
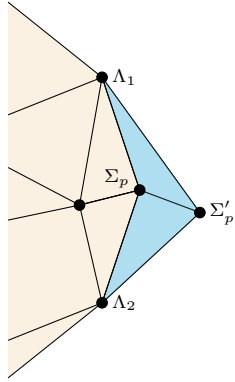

 The outcome is a pair of maps~$(F_{1},f_1)$ such that $tc^{f_1}\leq tc^f-1$, as required (since we have replaced~$\Sigma_p$ satisfying~$tc_f(\S_p)=1$ with~$\S'_p=\S_{p'}$ satisfying~$tc_{f^1}(\S_{p'})=0$. Furthermore, the point~$p$ lies in the interior of~$D^{k+1}$, in the new triangulation.
 
 Iterating Step 1 until no longer possible, we produce a pair~$(F_n,f_n)$ with~$f_n$ homotopic to~$f$ and~$tc^{f_n}=0$.

 {\bf Step 2}: Start with the pair~$(F_n,f_n)$ created in Step 1. We replace~$(F_n,f_n)$ with~$(F_{n+1},f_n)$ such that $tc^{F_{n+1}}< tc^{F_n}$.
 
Consider~$p\in (D^{k+1}\setminus \p D^{k+1})^{(0)}$ such that~$tc_{F_n}(\Sigma_p)=1$. By residuality of transverse embeddings there exists a separating system~$\Sigma'_p$ transverse to all spheres in separating systems of~$\im(F_n)_0\setminus \S_p$. We modify the map~$F_n$ to a map~$F_{n+1}$, by setting~$F_{n+1}(p)=\Sigma'_p$ and for all simplices~$\sigma$ in~$\st_{D^{k+1}}(p)$ replacing the occurrence of~$\Sigma_p$ in~$F_n(\sigma)$ with~$\Sigma_p'$ of the same weight. Then~$F_{n+1}=f_n$ on~$S^k=\p D^{k+1}$, and $tc^{F_{n+1}}\leq tc^{F_n}-1$ as required.
 
 After a finite number of iterations of Step 2, we produce a pair~$(F_m,f_n)=(G,g)$ such that $tc^g=tc^G=0$, as required.
 \end{proof}
 \end{prop}

We now consider a pair of maps~$(G,g)$ with $tc^g=tc^G=0$ as produced by the previous step, \emph{i.e.,}~such that all separating systems in~$\im(G)$ intersect transversely, regardless of whether the intersections are ghost or real. 

Let~$S_p$ and~$S_q$ be transversely embedded spheres in~$\R^3$, which intersect in a disjoint union of circles. Let~$D$ be a disk in~$S_p$ with boundary one of these circles, such that the interior of~$D$ contains no other intersection circle with~$S_q$. We recall what we mean by~\emph{surgery} on $S_q$ with respect to the disk~$D\subset~S_p$. Let~$(-1,1)\times S_q$ and $(-1,1)\times S_p$ be tubular neighbourhoods of~$S_p$ and~$S_q$, such that~$\{-\epsilon\} \times S_p$ lies in the interior of~$S_p$ when~$\epsilon>0$ and likewise for~$S_q$, and such that~$\{\epsilon\} \times~S_p$ intersects~$\{\delta\}\times S_q$ transversely for all~$\epsilon, \delta \in (-1,1)$, with intersection pattern the same as that of~$S_p$ and~$S_q$. These neighbourhoods can always be found with such properties, since immersions are open in~$\uemb(S^2,\R^3)\times \uemb(S^2,\R^3)$. Restricting the tubular neighbourhood of $S_p$ to $D$ yields a product region~$(-1,1)\times D$ containing~$D$. 

For small $\delta \in (0,1]$,  the intersection  $((-1,1)\times D)\cap (\{-\delta\} \times S_q)$ is an annulus $A\subset \{-\delta\}\times S_q$. Removing the interior of $A$ from $\{-\delta\} \times S_q$ yields two disks~$D^+_q(-\delta)$ and~$ D_q^-(-\delta)$, such that $\{-\delta\} \times S^q= D^+_q(-\delta) \cup {A} \cup D^-_q(-\delta)$. Then by \emph{surgery on~$S_q$ with respect to~$D$} we mean replacing~$S_q$ with the two spheres~$S_q^-$ and~$S_q^+$ where

$$S_q^-=D^-_q(-\delta)\cup (\{-1\} \times D)\text{  and  }S_q^+=D^+_q(-\delta)\cup (\{1\} \times D).$$ 

These are manifolds with corners and therefore not elements of $\uemb(S^2,\R^3)$, but we can smooth the corners in a neighbourhood arbitrarily close to the corners.  We do this smoothing and by abuse of notation call the resulting smoothly embedded spheres~$S_q^+$ and $S_q^-$.
Note that by construction,~$S_q^+$ and~$S_q^-$ are disjoint from~$S_q$, and lie in the interior of the ball bounded by~$S^q$. A schematic of the surgery is shown in Figure~\ref{fig:Surgery Diagram}.
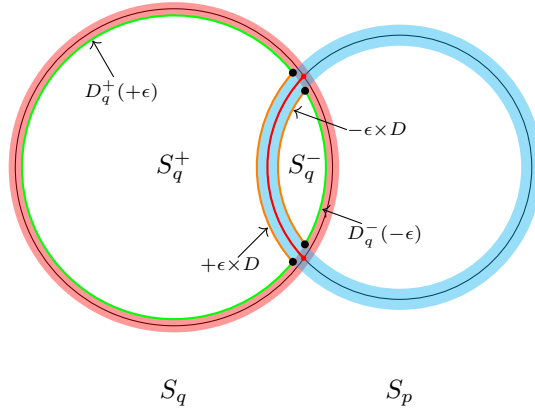
\begin{figure}[h!]
    \centering
    \begin{tikzpicture}{scale=0.6}
     \draw (0,0) circle(60pt);
     \draw (3,0) circle(50pt);
     \fill [red,opacity=0.4,even odd rule] (0,0) circle[radius=62.5pt] circle[radius=57.5pt];
     \fill [cyan,opacity=0.4,even odd rule] (3,0) circle[radius=54pt] circle[radius=46pt];
   
    \draw[thick,green] ([shift=(-30.5:57.5pt)]0,0) arc (-30.5:30.5:57.5pt);
    \draw[thick,green] ([shift=(38.5:57.5pt)]0,0) arc (38.5:321.5:57.5pt);

   \draw[thick,red] ([shift=(136.5:50pt)]3,0) arc (136.5:223.5:50pt);

    \draw[thick,orange] ([shift=(138.5:54pt)]3,0) arc (138.5:222.5:54pt);
    \draw[thick,orange] ([shift=(141:46pt)]3,0) arc (141:219:46pt);
    
    \fill ([shift=(-30.5:57.5pt)]0,0) circle (1.5pt);
    \fill ([shift=(321.5:57.5pt)]0,0) circle (1.5pt);
    \fill ([shift=(38.5:57.5pt)]0,0) circle (1.5pt);
    \fill ([shift=(141:46pt)]3,0) circle (1.5pt);
    \fill[red] ([shift=(136.5:50pt)]3,0) circle (1pt);
    \fill[red] ([shift=(223.5:50pt)]3,0) circle (1pt);

    \draw[->](-.8,1.2)--++(120:16pt);
    \draw[->](.85,-1.2)--++(45:16pt);
    \draw[->](2.3,.5)--++(160:21.5pt);
    \draw[->](2.5,-.75)--++(160:16pt);

    \node at (0,-3) {$S_q$};
    \node at (3,-3) {$S_p$};
    
    \node at (0,0) {$S_q^+$};
    \node at (1.75,0) {$S_q^-$};
    \node at (-.7,1) {$\scriptstyle D_q^+(-\delta)$};
    \node at (.75,-1.3) {$\scriptstyle 1\times D$};
    \node at (2.7,.5){$\scriptstyle -1\times D$};
     \node at (2.8,-.9){$\scriptstyle D_q^-(-\delta)$};

    \end{tikzpicture}

    \caption{A schematic picture of surgery on $S_q$ with respect to $D$. $S_p,~S_q$ are represented by the black circles on the left and right respectively, and $D$ is the arc in red on $S_p$.  The $\delta$-neighbourhood of $S_q$ is light red while the $\epsilon$-neighbourhood of $S_p$ is shown in light blue. The surgered sphere $S_q^+$ is the union of the green and orange arc in the interior of $S_q$, while $S_q^-$ is the union of the orange and green arc in the interior of $S_p$.}
    \label{fig:Surgery Diagram}
\end{figure}

\begin{defn}\label{defn-Disk(F)}
Given a pair of maps~$(G,g)$, with~$tc^G=0$, consider the image of all separating systems in~$\im(G)_0$. For each intersection circle in this image, the two intersecting spheres are divided by this circle into two disks. Let~$\Disk(G)$ be the set of all disks arising in this way in~$\im(G)_0$. A \emph{minimal disk} in~$\Disk(G)$  is any disk which has the smallest area, where area is calculated using the Euclidean metric on~$\R^3$.
\end{defn}

\begin{proof}[Proof of Theorem~\ref{thm-discrete fiber is contractible}] We show that $\iota_p:\Sep(\rho)^\delta_p\hookrightarrow \TSeprho_p$ is  a homotopy equivalence, by showing the relative homotopy groups vanish. We start with a pair of maps~$(F,f)$, \emph{i.e.,}
 \[ \xymatrix{f: S^{k}\ar[r]\ar@{^{(}->}[d]_{\partial}& |\Sep(\rho)^\delta_\bt|\ar@{^{(}->}[d]^{|\iota_\bt|}\\
 F: D^{k+1}\ar[r]& |\TSeprho_\bt|
 }\]
 
 By Proposition~\ref{prop - fill with transverse intersections} we can assume that the staring pair~$(F,f)$ satisfy the property that that all separating systems in~$\im(F)$ intersect transversely. 
 Let~$\In(F)$ be the total number of intersection circles between separating system of~$\im(F)_0$, and $\In(f)\leq \In(F)$ be the count of intersection circles where at least one of the intersecting spheres lies in some~$\S\in \im(f)_0$. We will produce a pair of maps~$(F',f')$ such that~$f\simeq f'$ and~$\im(F')\subset |\Sep(\rho)^{\delta}_\bt|$, \emph{i.e.,}~$\In(F')$=0.

Consider the sequence~$\Disk(F)$ from Definition~\ref{defn-Disk(F)}, and let~$D_{min}(F)\in \Disk(F)$ be minimal. Then~$D_{min}(F)$ lies in some sphere~$S_p\in \Sigma_p$ for~$\Sigma_p\in \im(F)_0$ the image of~$p\in (D^{k+1})^{(0)}$. Moreover, the circle boundary of~$D_{min}(F)$ is an intersection circle of~$S_p$ with some~$S_q\in \Sigma_q$ for~$\Sigma_q\in \im(F)_0$ the image of some~$q\in (D^{k+1})^{(0)}$. 

For~$j=p$ or $q$, we choose the tubular neighbourhoods~$(-1,1)\times \S_j$ such that for all~$t$,~$\{t\}\times \S_j$ is an essential separating system for~$\rho$ and has the same intersection pattern with~$\im(F)_0\setminus \S_j$ as~$S_j$ does, preserving transversality of intersections. 

We surger~$S_q$ with respect to~$D_{min}(F)$, as detailed in the discussion preceding this proof, using the tubular neighbourhoods~$(-1,1)\times S_p$ and $(-1,1)\times S_q$ given by restricting the tubular neighbourhoods of~$\S_p$ and~$\S_q$ to the spheres~$S_p$ and~$S_q$.
The output of the surgery is two spheres~$S^+_q$ and~$S^-_q$, which by construction are disjoint from~$\rho$ and lie in the interior of the ball bounded by~$S_q$ (recall they arise as disks in~$\{-\delta\} \times S_q$).

Since~$\Sigma_q$ is an essential separating system, there is at most one link piece in each connected component of~$\R^3\setminus \S_q$. Suppose there is a link piece in the connected component whose boundary contains~$S_q$ and which lies in the interior of the ball bounded by~$S_q$. Then one of~$S^+_q$ and~$S^-_q$ bounds a ball containing this link piece -- call this~$S'_q$, and let~$\Sigma'_q$ be the separating system~$(\{-\delta\} \times (\Sigma_q\setminus S_q)) \cup S'_q$. If there is no link piece in the component of~$\R^3\setminus \S_q$ with boundary~$S_q$, then let $\Sigma'_q$ be the separating system~$\{-\delta\} \times (\S_q\setminus S_q)$.

Similar to the proof of Proposition~\ref{prop - fill with transverse intersections} we now have two cases, depending on whether~$q$ is in the boundary or interior of~$D^{k+1}$. In the first case we homotope~$f$ and replace~$F$, changing the triangulation of~$D^{k+1}$ in the process. In the second, we replace~$F$ with a map that sends~$q$ to a different vertex. Unlike the proof of Proposition~\ref{prop - fill with transverse intersections}, the order in which we do these relies on reevaluating~$D_{min}$ after every step, so we cannot first fix the boundary and  then the interior of~$D^{k+1}$. (Note that these two cases encompass the possibility in which~$S_q^+$ or~$S_q^-$ may be parallel to another sphere of $\Sigma_q$, so by construction~$\S'_q$ does not contain parallel spheres).

{\bf Case 1:} $q\in \p D^{k+1}$. Proceed as in the proof of Proposition~\ref{prop - fill with transverse intersections}. Create a new triangulation of~$D^{k+1}$ by adding a vertex~$q'$ and the simplices of~$q'* \st_{S^k}(q)$. Let~$F_{1}$ map from this new triangulation of~$D^{k+1}$ to~$|\TSeprho_\bt|$. On~0-simplices define~$F_1$ to agree with~$F$ away from~$q'$, and~$F_{1}(q')=\Sigma'_q$. Since~$\Sigma'_q$ is disjoint from~$\Sigma_r$ when $r\in \st_{S^k}(q)$, we can extend this map over the simplices of the new triangulation to get a pair of maps~$(F_1,f_1)$. (The map $f_1:S^k\to |\Sep(\rho)^\delta_\bt|$ is given by~$F_{1}$ restricted to~$\p D^{k+1}$.) It remains to show that~$f_1$ is homotopic to~$f$. We achieve this via the homotopy~$h:([0,1]\times~S^{k})\to |\Sep(\rho)_\bt|$ with~$h_0=f$,
 ~$h_1=f_1$ and such that as~$t$ varies from~$0$ to~$1$ the weight from the separating system~$\Sigma_q$ shifts to~$\Sigma'_q$ in~$\im(h_t)$.

{\bf Case 2:} $q\in D^{k+1}\setminus \p D^{k+1}$. Proceed as in the proof of Proposition~\ref{prop - fill with transverse intersections}. Modify the map~$F$ to a map~$F_{1}$, by setting~$F_{1}(q)=\Sigma'_q$ and for all simplices~$\sigma$ in~$\st_{D^{k+1}}(q)$ replacing the occurrence of~$\Sigma_q$ in~$F(\sigma)$ with~$\Sigma_q'$ of the same weight. Then~$F_1=f$ on~$\p D^{k+1}$ -- rename~$f$ to~$f_1$.

In either case, we have replaced the pair~$(F,f)$ with a pair~$(F_1,f_1)$ such that either:
\begin{itemize}
    \item[(i)] $\In(f_1)<\In(f)$. Note that it is not necessarily the case that~$\In(F_1)\leq \In(F)$, since we added an extra vertex.
    \item[(ii)] $\In(F_1)<\In(F)$. The reason we created no new intersections when doing our surgery was due to our choice of minimal area disk.
\end{itemize}

We now iterate the process with the pair~$(F_1,f_1)$, considering a minimal area disk in the sequence~$\Disk(F_1)$. At each stage we either reduce the total number of intersections, or we possibly increase it, but reduce the number of total intersections involving separating systems in the image of~$S^k$. Therefore, after a finite number of iterations we produce a pair of maps~$(F',f')$ satisfying~$\In(F')=0$ and we are done. 

Note that we only need to remove real intersections, but by considering both ghost and real intersections we ensure that no new intersections are created when doing the surgery (due to minimality of the area of the surgery disk).
\end{proof}

Thus we have finished the proof of Theorem~\ref{abcthm - sep equiv}.

\appendix

\section{Further implications of Theorem~\ref{abcthm - sep equiv}}\label{section - implications for homotopy groups}
In this appendix we introduce a levelwise `round' subspace of~$\Sep$, and use the previous sections to obtain nice representatives for classes in the homotopy groups of~$\link(L)$.

Let $\uremb(\sqcup_{i=1}^kS^2,\R^3)$ be the space of unparametrised embeddings of $\sqcup_{i=1}^kS^2$ into $\R^3$, for which the image of each sphere is \emph{round}, \emph{i.e.,} it bounds a Euclidean ball in~$\R^3$. The following result is Lemma 4.2 of \cite{BrendleHatcher}, but we include a proof here for the reader's convenience.

\begin{lem}\label{lem - rounding spheres is he}  The map  $\iota \colon \uremb(\sqcup_{i=1}^kS^2,\R^3)\hookrightarrow \uemb(\sqcup_{i=1}^kS^2,\R^3)$ is a homotopy equivalence.  
\begin{proof} We will prove the corresponding statement for parametrised embeddings, \emph{i.e.} that the inclusion $\remb(\sqcup_{i=1}^kS^2,\R^3)\rightarrow\emb(\sqcup_{i=1}^kS^2,\R^3)$ is a homotopy equivalence.  The statement will then follow from Smale's theorem that $\Diff(S^2)\simeq O(3)$. To simplify notation, we will abbreviate $\emb(\sqcup_{i=1}^kS^2,\R^3)$ to $\emb$ and  $\remb(\sqcup_{i=1}^kS^2,\R^3)$ to $\remb$.  We will show that the relative homotopy groups $\pi_k(\emb,\remb)$ vanish for all $k\geq 1$. Since both of these spaces have the homotopy type of CW complexes, the lemma will follow.

Consider a relative homotopy class $\xi\colon (D^k,\partial D^k)\rightarrow (\emb,\remb)$. Since $D^k$ is contractible, we may assume the spheres are labelled over all of $D^k$.

The configuration of spheres have the same combinatorial type over the whole disk $D^k$ (see Definition~\ref{def-combinatorial type}). In particular there is a well-defined notion of nesting of spheres, as well as which spheres are outermost. We will homotope $\xi$ into $\remb$ first by making the outermost spheres round, then the second outermost spheres, etc. Let $S^2_1,\ldots, S^2_{n_1}$ be the collection of outermost spheres.  Use isotopy  extension to get a family of embeddings $\hat{\xi}\colon D^k\rightarrow \emb(\sqcup_{i=1}^{n_1}B_i^3,\R^3)$, where $B^3_i$ bounds $S^2_i$.  
By the proof of the Smale conjecture \cite{Hatcher} that $\Diff(B^3\text{ rel } \partial B^3)$ is contractible, we can homotope $\hat{\xi}$ so that on $\partial D^k$ it consists of round embeddings (\emph{i.e.}~rescaled isometric embeddings) of $\sqcup_{i=1}^{n_1}B_i^3$.

Denote the subspace of round  embeddings of $\sqcup_{i=1}^{n_1}B_i^3$ by $
\remb(\sqcup_{i=1}^{n_1}B_i^3,\R^3)$. 
Thus $\hat{\xi}$ defines a relative homotopy class of   $(\emb(\sqcup_{i=1}^{n_1}B_i^3,\R^3),\remb(\sqcup_{i=1}^{n_1}B_i^3,\R^3))$. We now use the linearisation procedure as in the proof of Lemma \ref{lem:Balls Complement} to show this class is trivial. Explicitly, given embeddings $f_i\colon B_i^3\rightarrow \R^3$, for each $t\in [0,1]$ define a family of rescaled embeddings \[f_i(x)\mapsto \frac{1}{1-t}f_i((1-t)x).\] 
As $t\to 1$ this replaces $f_i(x)$ with the linear map $D_0(f_i)(x)$, where $D_0(f_i)$ is the derivative of $f_i$ at the origin, and continuity holds at $t=1$ because $f_i$ is differentiable. This replaces each $f_i$ with a linear embedding, demonstrating that $\emb(B^3,\R^3)\simeq \GL_3(\R)\times \R^3$, where the second factor corresponds to the image of the origin. Since $f_i|{\p D^k}$ is already linear, this process does not change $\hat{\xi}|_{\p D^k}$ for all $t\in[0,1]$. Now we use $\GL_3(\R)$ is homotopy equivalent to the orthogonal group $O(3)$, so we may further replace each $f_i$ by a linear isometric embedding. 

The linear isometric embeddings obtained at the end of this process have the same centerpoints as the initial $f_i$, hence we must rescale the balls so that throughout this process the images of the $B_i^3$ remain disjoint.  We can choose a rescaling function that depends continuously on the minimum distance between any two of the centers $\{f_i(0)\}$, and that continuously damps out to 1 as we approach $\p D^k$. At the end of this homotopy, we have a $D^k$-family of round embeddings of $B_1^3,\ldots, B_{n_1}^3$.  We then proceed to the next outermost spheres, which will have moved by isotopy extension during this process, and repeat the same procedure until all the spheres are round on all of $D^k$.\qedhere
\end{proof}
\end{lem}

\begin{defn}\label{defn-SREmb}
We define the semi-simplicial space~$\RSep_\bt$ by setting~$\RSep_p$ to be the levelwise subspace of~$\Sep_p$ consisting of embeddings such that the image of each embedded sphere in~$\R^3$ is \emph{round}, that is it bounds a Euclidean ball in~$\R^3$.
\end{defn}

\begin{prop}\label{prop-SRemb he Semb}
The inclusion map~$\iota_\bt:\RSep_\bt\hookrightarrow \Sep_\bt$ induced by the levelwise inclusion of embedding spaces, induces a homotopy equivalence on geometric realisations.
\begin{proof}
From~\cite[Lemma 2.4]{EbertRandalWilliams}, it is enough to show that
\[
\iota_p:\RSep_p\hookrightarrow \Sep_p
\]
is a homotopy equivalence. A point in $\Sep_p$ may be regarded as an embedding $\rho\in \link(L)$ together with a $(p+1)$-tuple $(\Sigma_0,\ldots,\Sigma_p)$, where each $\Sigma_i$ is a separating system for $\rho$ and all the $\Sigma_i$ can be embedded disjointly. Consider the map \[\Sep_p\rightarrow\bigsqcup_{k=1}^{\infty}\uemb(\sqcup_{i=1}^kS^2,\R^3)\] 
which sends each point in~$\Sep_p$ to the union $\sqcup_{i=0}^p\Sigma_i$ of corresponding separating systems. This is a fibration over each connected component, hence it has the homotopy lifting property. We lift the deformation retraction from Lemma \ref{lem - rounding spheres is he} to get a homotopy equivalence $\RSep_p\simeq\Sep_p$.
\end{proof}
\end{prop}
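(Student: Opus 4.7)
The plan is to reduce the statement about geometric realisations to a levelwise assertion, and then use the homotopy equivalence between round and general sphere systems established in Lemma \ref{lem - rounding spheres is he}. Since both $\RSep_p$ and $\Sep_p$ are open subspaces of infinite-dimensional smooth manifolds, they carry the homotopy type of CW complexes (see Remark \ref{rem - ht cw complexes}). A standard semi-simplicial result such as \cite[Lemma 2.4]{EbertRandalWilliams} then guarantees that a levelwise weak equivalence of semi-simplicial spaces induces a weak equivalence of geometric realisations. Thus the problem reduces to showing that $\iota_p \colon \RSep_p \hookrightarrow \Sep_p$ is a homotopy equivalence for each $p \geq 0$.

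For the levelwise comparison, I would introduce the forgetful map
\[
\varphi_p \colon \Sep_p \longrightarrow \bigsqcup_{k \geq 1} \uemb(\sqcup_{i=1}^k S^2,\R^3)
\]
that sends a tuple $(\rho,\Sigma_0,\ldots,\Sigma_p)$ to the union $\Sigma_0 \sqcup \cdots \sqcup \Sigma_p$, together with its restriction to $\RSep_p$. The key observation is that these maps are fibrations over each connected component of the target. This should follow from parametric isotopy extension: a continuous path of disjoint sphere systems can be realised by an ambient isotopy of $\R^3$, and transporting the link $\rho$ along this ambient isotopy yields a continuous lift. Because the spheres continue to separate $\rho$ into its pieces throughout the isotopy, the lift lands in $\Sep_p$.

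Granted the fibration property, the result follows by lifting the rounding deformation furnished by Lemma \ref{lem - rounding spheres is he}. Working one connected component of the target at a time, that lemma provides a homotopy from the identity of $\uemb(\sqcup_{i=1}^k S^2,\R^3)$ to a retraction onto $\uremb(\sqcup_{i=1}^k S^2,\R^3)$, and the homotopy lifting property for $\varphi_p$ pulls this back to a homotopy from the identity of $\Sep_p$ to a map landing in $\RSep_p$. Combining with the analogous lifted homotopy on $\RSep_p$ (or appealing to the Whitehead theorem for CW complexes) exhibits $\iota_p$ as a homotopy equivalence.

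The hard part, and the step I would need to justify most carefully, is the claim that $\varphi_p$ is a Serre fibration on each connected component. The components of the target are indexed by isotopy classes of disjoint unions of essential separating spheres for $\rho$, and the issue is continuity of the lift as the sphere system moves. This amounts to a parametric version of isotopy extension, adapted to the fact that the total space $\Sep_p$ involves $p+1$ separating systems simultaneously, subject to a mutual disjointness constraint that must persist along the lifted path. Once this technical point is handled, however, the desired homotopy equivalence follows essentially formally from Lemma \ref{lem - rounding spheres is he}.
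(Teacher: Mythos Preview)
Your proposal is correct and follows essentially the same route as the paper: reduce to a levelwise statement via \cite[Lemma 2.4]{EbertRandalWilliams}, use the forgetful map to the sphere embedding space, observe it is a fibration on components (via isotopy extension), and lift the rounding deformation of Lemma~\ref{lem - rounding spheres is he}. Your discussion of the fibration claim is in fact more detailed than the paper's, which simply asserts it.
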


There is an augmentation map $\varepsilon_\bt:\RSep_\bt\to \link(L)$ inherited from that of~$\Sep_\bt$.

\begin{thm}\label{thm - SRemb and emb equivalent}
The realisation of the augmentation map
\[|\varepsilon_\bt|:|\RSep_\bt|\to \link(L) \]is a homotopy equivalence.
\begin{proof}
From Proposition~\ref{prop-SRemb he Semb} we have that the map induced by inclusion
\[ |\RSep_\bt|\overset{|\iota_\bt|}{\hookrightarrow}|\Sep_\bt|
\]
is a weak equivalence, and from Theorem~\ref{abcthm - sep equiv}, the realisation of the augmentation map
\[|\Sep_\bt|\overset{|\varepsilon_\bt|}{\rightarrow} \link(L)\]
is also weak equivalence. Since all the spaces involved have the homotopy type of CW-complexes, we conclude that the composition of these two maps is a homotopy equivalence.
\end{proof}
\end{thm}

\begin{defn}
Suppose there exists a map~$f:S^k\to \link(L)$ where~$S^k$ is triangulated such that:
\begin{enumerate}[(i)]
    \item on the image of each simplex there exists a separating system which varies by isotopy as the embedded link does, and,
    \item on shared faces of simplices all separating systems exist simultaneously and disjointly.
\end{enumerate}
Then we will say the image of the triangulated sphere~$S^k$ under such a map~$f$ exhibits a \emph{compatible separating triangulation} in~$\link(L)$. If in addition every separating sphere is round, then we say the map~$f$ exhibits a \emph{compatible round separating triangulation} in~$\link(L)$.
\end{defn}

\begin{cor}\label{cor - compatible round separating triangulation}
Each homotopy class in~$\pi_k(\link(L))$ has a representative~$f:S^k\to \link(L)$ such that~$f$ exhibits a compatible round separating triangulation.
\begin{proof}
From Theorem \ref{thm - SRemb and emb equivalent},~$\pi_k(\link(L))\simeq \pi_k(|\RSep_\bt|)$, and so every map~$f:S^k\to \link(L)$ lifts to a map $\hat{f}:S^k\to |\RSep_\bt|$ and we consider the image~$\hat{f}(S^k)$ of this map. Each point in the image is a point in~$|\RSep_\bt|$ and, as discussed in Section~\ref{section - contractible}, the data associated to this point is a link, and a collection of disjoint separating systems~$\Sigma_0,\ldots,\Sigma_p$ for this link, such that each separating system has a {weight} (corresponding to the barycentric coordinates of the simplex direction of the geometric realisation). For each point~$x\in S^k$, we choose a separating system~$\Sigma(x)$ that has non zero weight at~$\hat{f}(x)$. Then since having non-zero weight is an open condition, there exists an open set~$U_x\subset S^k$ containing~$x$ for which~$\Sigma_x$ remains a separating system with non zero weight in~$\hat{f}(U_x)$. The~$U_x$ give an open cover of~$S^k$ and we choose a finite subcover~$\bigcup_{\alpha \in \Lambda}U_\alpha$. If~$U_\alpha\cap U_\beta\neq \emptyset$ for~$\alpha, \beta \in \Lambda$, it follows that the separating systems~$\Sigma_\alpha$  and~$\Sigma_\beta$ both have non-zero weight on the intersection, which implies that the separating systems can be realised disjointly. 
We now triangulate~$S^k$ so that each simplex lies fully in some~$U_\alpha$, and this completes the proof.
\end{proof}
\end{cor}

\begin{rem}
    By Proposition~\ref{prop-SRemb he Semb} we can also deduce the weaker claim that each homotopy class in~$\pi_k(\link(L))$ has a representative~$f:S^k\to \link(L)$ such that~$f$ exhibits a compatible separating triangulation, where the spheres of the separating systems are not required to be round. However, round spheres behave like points when shrunk, and also allow applications once one knows the homotopy type of a link piece in~$\R^3$ (for example in Brendle--Hatcher the round spheres allow one to round the individual unknots \cite{BrendleHatcher}).
    The existence of such a compatible separating triangulation for a disk as opposed to a sphere (which follows the same proof as above) is a key point in Step 2 of the proof of Theorem 4.1 in Brendle--Hatcher \cite{BrendleHatcher}, in which the existence is stated but not fully justified. 
\end{rem}
\bibliography{mybib}{}
\bibliographystyle{alpha}
\end{document}